\providecommand{\noopsort[1]{}}
\numberwithin{equation}{section}
\newtheorem{thm}{Theorem}[section]
\newtheorem{prop}[thm]{Proposition}
\newtheorem{lem}[thm]{Lemma}
\theoremstyle{remark}
\newtheorem{rem}[thm]{Remark}
\theoremstyle{definition}
\renewcommand{\Re}{{\rm Re}\,}
\newcommand{\one}{\mathbbm{1}}
\newcommand{\sol}{{\rm{sol}}}
\newcommand{\ot}{\otimes}
\newcommand{\n}{\Vert}
\newcommand{\FF}{\mathbb{F}}
\newcommand{\Vap}{V_{\alpha}^p([0,T]\times\Om;E)}
\newcommand{\Vapo}{V_{\alpha}^p([0,T_0]\times\Om;E)}
\newcommand{\Vaq}{V_{\alpha}^q([0,T]\times\Om;E)}
\newcommand{\form}[3]{\ifthenelse{\equal{#2}{}}{\mbox{$ #1\Big[\, \cdot\,  , \,
\cdot\,  \Big]$}}{
\mbox{$ #1\Big[ #2 , #3 \Big]$}}}
\newcommand{\qform}[2]{\ifthenelse{\equal{#2}{}}{\mbox{$ #1\Big[\cdot \Big]$}}{
\mbox{$ #1\Big[ #2 \Big]$}}}
\newcommand{\ip}[2]{\ifthenelse{\equal{#1}{}}{\mbox{$ \Big( \,\cdot\; \vline \;
\cdot \, \Big) $}}{
\mbox{$ \Big( #1 \;  \vline \; #2 \Big)$}}}
\newcommand{\norm}[1]{\ifthenelse{\equal{#1}{}}{\mbox{$\|\cdot\|$}}{\mbox{$\| #1
\|$}}}
\newcommand{\betr}[1]{\ifthenelse{\equal{#1}{}}{\mbox{$|\cdot|$}}{\mbox{$| #1
|$}}}
\newcommand{\dual}[2]{\ifthenelse{\equal{#1}{}}{\mbox{$ \Big\langle \,\cdot\; ,
\; \cdot \, \Big\rangle $}}{
\mbox{$ \Big\langle #1 \;  , \; #2 \Big\rangle$}}}
\newcommand{\pdual}[2]{\ifthenelse{\equal{#1}{}}{\mbox{$ \Big[ \,\cdot\; , \;
\cdot \, \Big] $}}{
\mbox{$ \Big[ #1 \;  , \; #2 \Big]$}}}
\newcommand{\bdual}[2]{\ifthenelse{\equal{#1}{}}{\mbox{$ \Big\langle \,\cdot\; ,
\; \cdot \, \Big\rangle_* $}}{
\mbox{$ \Big\langle #1 \;  , \; #2 \Big\rangle_*$}}}
\newcommand{\CR}{\mathbb{R}}
\newcommand{\CC}{\mathbb{C}}
\newcommand{\CN}{\mathbb{N}}
\renewcommand{\P}{\mathbb{P}}
\newcommand{\C}{\mathcal{C}}
\newcommand{\bS}{\mathbf{S}}
\newcommand{\cF}{\mathscr{F}}
\newcommand{\expect}{\mathbb{E}}
\newcommand{\cL}{\Lambda}
\newcommand{\half}{\frac{1}{2}}
\newcommand{\ghe}{\gamma (H,E)}
\newcommand{\inject}{\hookrightarrow}
\def\R{{\mathbb R}}
\def\C{{\mathbb C}}
\newcommand{\E}{{\mathbb E}}
\renewcommand{\P}{{\mathbb P}}
\newcommand{\F}{{\mathscr F}}
\renewcommand{\H}{\mathscr{H}}
\newcommand{\OO}{\mathcal{O}}
\renewcommand{\a}{\alpha}
\renewcommand{\b}{\beta}
\newcommand{\la}{\lambda}
\newcommand{\om}{\omega}
\renewcommand{\O}{\Omega}
\newcommand{\Om}{\Omega}
\newcommand{\calL}{{\mathscr L}}
\newcommand{\g}{\gamma}
\newcommand{\embed}{\hookrightarrow}
\newcommand{\s}{^*}
\newcommand{\lb}{\langle}
\newcommand{\rb}{\rangle}
\newcommand{\limn}{\lim_{n\to\infty}}
\newcommand{\Dom}{\mathsf{D}}
\begin{document}

\title[Approximating the coefficients in semilinear SPDEs]
{Approximating the coefficients in semilinear stochastic partial differential equations}
\author{Markus Kunze}
\author{Jan van Neerven}
\address{Delft Institute of Applied Mathematics, Delft University of Technology,
P.O. Box 5031, 2600 GA Delft, 
The Netherlands}
\email{M.C.Kunze@TUDelft.nl, J.M.A.M.vanNeerven@TUDelft.nl}
\date{\today}
 
\begin{abstract} 
We investigate, in the setting of UMD Banach spaces $E$, the continuous
dependence on the data $A$, $F$, $G$ and $\xi$ of mild solutions of semilinear stochastic evolution equations
with multiplicative noise of the form
$$
             \left\{
\begin{aligned}
           dX(t) & =  [AX(t) + F(t,X(t))] \, dt + G(t,X(t)) \, dW_H(t),\quad
t\in
	   [0,T],\\
	    X(0) & =  \xi,
\end{aligned}\right.
$$
where $W_H$ is a cylindrical Brownian motion in a Hilbert space $H$.
We prove continuous dependence of the compensated solutions $X(t) - e^{tA}\xi$
in the norms 
$L^p(\O;C^\la([0,T];E))$ 
assuming that the approximating operators $A_n$ are uniformly sectorial and
converge to $A$ in the strong
resolvent sense, and that the approximating nonlinearities $F_n$ and $G_n$ are uniformly Lipschitz
continuous in suitable 
norms and converge to $F$ and $G$ pointwise. 
Our results are applied to a class of semilinear parabolic SPDEs 
with finite-dimensional multiplicative noise.
\end{abstract}

\thanks{The authors are supported by VICI subsidy 639.033.604
of the Netherlands Organisation for Scientific Research (NWO)}
\maketitle

\section{Introduction}

We consider semilinear stochastic evolution equations with multiplicative noise
of the form
\begin{equation}\label{SDE}\tag{SCP} 
             \left\{
\begin{aligned}
           dX(t) & =  [AX(t) + F(t,X(t))] \, dt + G(t,X(t)) \, dW_H(t),\quad
	   t\in[0,T],\\
	    X(0) & =  \xi,
\end{aligned}\right.
\end{equation}
where $A$ is the generator of a strongly continuous analytic semigroup $\bS = (S(t))_{t\ge 0}$ 
on a UMD
Banach space
$E$, the driving process $W_H$ is a cylindrical 
Brownian motion in a Hilbert space $H$  
defined on some probability space $\Om$ (see Section \ref{sect.solution} 
for the definition), the functions $F: [0,T]\times\Om\times E \to E$ and $G :
[0,T]\times\Om\times E \to \mathscr{L}(H,E)$
satisfy suitable measurability and Lipschitz continuity conditions,  
and the initial value $\xi$ is an $E$-valued random variable on $\Om$.

The theory of stochastic integration in UMD Banach spaces yields existence,
uniqueness and regularity of mild solutions 
\cite{vNVW08, vNVW07}.
It is a natural question how this solution depends on the `coefficients' $A,
F$, $G$
and the initial datum $\xi$. 
Our main abstract results are Theorems \ref{t.coeffdep} and \ref{t.hoelderconv}
which assert, roughly speaking, that the solution $X(\cdot)$ and the compensated 
solution $X(\cdot )-S(\cdot)\xi$ depend continuously on $A$, $F$, $G$ and $\xi$ 
simultaneously with respect to the norms of $L^p(\Omega;C([0,T];E))$ 
and $L^p(\Omega;C^\lambda([0,T];E))$, 
respectively.

In the case when $E$ is a Hilbert space, concerning dependence of the solution
on the initial datum $\xi$ we refer 
to Da Prato and Zabczyk
\cite{dprzab2}; see also the recent work by Marinelli, Pr\'ev\^ot,
and R\"ockner \cite{mpr08} for the 
case of Poisson noise. Approximations of the functions $F$ and $G$ are
considered in Peszat and Zabczyk 
\cite{pezab95} and Seidler \cite{seidler97}. Under more restrictive assumptions than ours,
simultaneous approximations of 
$A$, $F$, $G$, and $\xi$ were considered by Brze\'zniak \cite{Brz97} in the
setting of UMD Banach spaces with type $2$. 

These approximation results provide a justification for the use of numerical
schemes, where necessarily one 
replaces continuous objects by discretized approximations. Furthermore,
approximating $A$ by bounded operators 
$A_n$ (such as their Yosida approximations, see Section
\ref{subsec.Yosida}) is often helpful on a technical level, 
for instance in the standard proofs of the It\^o lemma in infinite dimensions
\cite{BrzElw, dprzab2}. 

We apply our abstract results to the stochastic partial
differential equation (SPDE)
$$
\left\{
\begin{aligned}\displaystyle
\frac{\partial u}{\partial t}(t,x ) &= \mathcal{A}u (t,x) + f(u(t,x)) +
\sum_{k=1}^K g_k(u(t,x))\frac{\partial W_k}{\partial t} (t), 
&& x \in \mathcal{O}, && t\ge 0,
\\
 u(t,x) &= 0, 
&& x \in \partial \mathcal{O}, &&  t\ge 0, 
\\
 u(0,x) &= \xi(x), && x \in \mathcal{O} .&&
\end{aligned}\right.
$$
Here $\mathcal{O}$ is a bounded open domain in $\CR^d$ and
$$\mathcal{A}u(x)= \sum_{i=1}^d \frac{\partial}{\partial
x_i}\Big(a_{ij}(x)\sum_{j=1}^d\frac{\partial u}{\partial x_j}(x)\Big)
+ \sum_{j=1}^d b_j(x) \frac{\partial u}{\partial x_j}(x)
$$
is a second order differential operator in divergence form  
whose coefficients ${\bf a} = (a_{ij})$ and ${\bf b} = (b_j)$ satisfy suitable
boundedness and uniform 
ellipticity conditions.
The functions $f$ and $g_k$ are Lipschitz continuous and the driving processes
$W_k$ are independent 
real-valued standard Brownian motions.
For this problem our abstract results imply the following approximation result.
We let $\n f\n_{\rm Lip} = \sup_{t\not=s} \frac{|f(t)-f(s)|}{|t-s|}$
denote the Lipschitz seminorm of a function $f$.

\begin{thm}\label{thm:main} Let $\mathbf{a}, \mathbf{a}_n \in L^\infty
(\mathcal{O}; \CR^{d\times d})$,  let
$\mathbf{b}, \mathbf{b}_n \in L^\infty (\mathcal{O}; \CR^d)$, and let $f$,
$f_n$, $g_k$, $g_{k,n}:\R\to \R$ be 
Lipschitz continuous. Assume that there exist finite constants $\kappa,C>0$ such
that:
\begin{enumerate}
\item[\rm(i)] $\mathbf{a},\mathbf{a}_n$ are symmetric and
$\mathbf{a}{\bf x}\cdot {\bf x} , \ \mathbf{a}_n{\bf x} \cdot {\bf x} \geq
\kappa |{\bf x} |^2$ for all ${\bf x} \in \CR^d$;
\item[\rm(ii)]
$\norm{\mathbf{a}}_\infty, \, \norm{\mathbf{a}_n}_\infty, \,
\norm{\mathbf{b}}_\infty, \, \norm{\mathbf{b}_n}_\infty \leq C$;
\item[\rm(iii)]
$\n f\n_{\rm Lip},\, \n f_n\n_{\rm Lip},\, \n g_k\n_{\rm Lip},\, \n
g_{k,n}\n_{\rm Lip}\le C.$
\end{enumerate}
Assume further that
\begin{enumerate}
\item[\rm(iv)] $\limn \mathbf{a}_n = \mathbf{a},\, \lim_{n\to \infty} 
\mathbf{b}_n = \mathbf{b}$ almost everywhere on $\OO$; 
\item[\rm(v)] $\limn f_n = f,\, \limn g_{k,n} = g_k$ pointwise on $\OO$.
\end{enumerate}
Let $1<p<\infty$. If $\xi_n\to \xi$ in $L^p(\OO)$, 
the approximate mild solutions $u_n$ converge to the mild solution $u$ in the following
compensated sense:
for all $1\le q<\infty$ and $0\le \lambda < \frac12$   
we have
\[  u_n - S_n(\cdot)\xi_n \to  u-S(\cdot)\xi \ \hbox{in} \ 
L^q(\Omega;C^\lambda ([0,T]; L^p(\OO))). \]
Here
$S_n(\cdot)$ and $S(\cdot)$ denote the strongly continuous analytic semigroups
generated by the $L^p(\OO)$-realizations of $\mathcal{A}_n$ and $\mathcal{A}$.
Furthermore, for all $1\le q<\infty$ we have $ u_n\to u \ \hbox{in} \ L^q(\Omega;C([0,T]; L^p(\OO)))$.
\end{thm}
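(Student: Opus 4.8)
The plan is to recognise the SPDE as a special case of the abstract problem \eqref{SDE} in the UMD space $E=L^p(\OO)$ (which moreover has Pisier's property $(\alpha)$), and then to verify, for this concrete data, the hypotheses of Theorems~\ref{t.coeffdep} and~\ref{t.hoelderconv}. Four points must be checked: (1) the $L^p(\OO)$-realizations $A$ of $\mathcal{A}$ and $A_n$ of $\mathcal{A}_n$ are uniformly sectorial, and admit a uniformly bounded $H^\infty$-calculus of some angle $<\tfrac{\pi}{2}$ (needed for the stochastic estimates behind the abstract results); (2) $A_n\to A$ in the strong resolvent sense on $L^p(\OO)$; (3) the Nemytskii maps $F,F_n\colon E\to E$ and $G,G_n\colon E\to\gamma(\R^K,E)$ induced by $f,f_n$ and by $(g_k)_{k},(g_{k,n})_{k}$ are Lipschitz continuous uniformly in $n$; (4) $F_n\to F$ and $G_n\to G$ pointwise on $E$. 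Since the hypothesis $\xi_n\to\xi$ in $L^p(\OO)$ is exactly the required convergence of the initial data in $E$, once (1)--(4) are in place the abstract theorems apply and yield the two assertions: the compensated convergence in $L^q(\Om;C^\lambda([0,T];L^p(\OO)))$ from Theorem~\ref{t.hoelderconv}, and the convergence in $L^q(\Om;C([0,T];L^p(\OO)))$ from Theorem~\ref{t.coeffdep}.

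For the linear input, consider on $H^1_0(\OO)$ the sesquilinear forms $\mathfrak{a}_n(u,v)=\int_\OO\mathbf{a}_n\nabla u\cdot\nabla\overline{v}\,dx+\int_\OO(\mathbf{b}_n\cdot\nabla u)\,\overline{v}\,dx$, and similarly $\mathfrak{a}$. By (i)--(ii) these are, after one common shift, uniformly bounded and uniformly coercive on $H^1_0(\OO)$, so the associated operators generate analytic $C_0$-semigroups on $L^2(\OO)$, consistently on $L^p(\OO)$, whose heat kernels obey Gaussian upper bounds with constants depending only on $\kappa$, $C$ and $d$ — the classical estimate for real, bounded, uniformly elliptic divergence-form operators with bounded lower-order terms. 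Uniform Gaussian bounds give at once the uniform sectoriality and the uniformly bounded $H^\infty(\Sigma_\sigma)$-calculus on $L^p(\OO)$, $1<p<\infty$, for some $\sigma<\tfrac{\pi}{2}$, which settles (1). For (2): by (iv), the bounds (ii) and dominated convergence, $\mathfrak{a}_n(u,v)\to\mathfrak{a}(u,v)$ for every $u,v\in H^1_0(\OO)$; since the forms share the domain $H^1_0(\OO)$ and are uniformly sectorial, this forces $R(\lambda,A_n)\to R(\lambda,A)$ strongly on $L^2(\OO)$, and the uniform Gaussian bounds then propagate this — equivalently, by Trotter--Kato, $\bS_n(t)\to\bS(t)$ strongly and locally uniformly in $t$ — to strong resolvent convergence on $L^p(\OO)$ for every $1<p<\infty$.

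For the nonlinear input, $F(u)=f\circ u$ and $F_n(u)=f_n\circ u$ map $L^p(\OO)$ into itself (as $|\OO|<\infty$ and $|f(r)|\le|f(0)|+\|f\|_{\mathrm{Lip}}|r|$), with $\|F(u)-F(v)\|_{L^p}\le\|f\|_{\mathrm{Lip}}\|u-v\|_{L^p}$, uniformly in $n$ by (iii). Since $\R^K$ is finite-dimensional, $\gamma(\R^K,L^p(\OO))$ coincides with $\mathscr{L}(\R^K,L^p(\OO))$, with norm comparable to $\|(\sum_{k=1}^K|Te_k|^2)^{1/2}\|_{L^p}$; writing $G(u)e_k=g_k\circ u$ then gives $\|G(u)-G(v)\|_{\gamma(\R^K,L^p(\OO))}\le C'\,(\sum_{k=1}^K\|g_k\|_{\mathrm{Lip}}^2)^{1/2}\|u-v\|_{L^p}$, again uniform in $n$ by (iii). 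Finally, fix $u\in L^p(\OO)$. By (v) (read on $\R$), $f_n(u(x))\to f(u(x))$ and $g_{k,n}(u(x))\to g_k(u(x))$ for a.e.\ $x$; since $f_n(0)\to f(0)$ and $g_{k,n}(0)\to g_k(0)$ stay bounded, (iii) supplies fixed $L^p$-majorants, and dominated convergence yields $F_n(u)\to F(u)$ in $L^p(\OO)$ and $G_n(u)\to G(u)$ in $\gamma(\R^K,L^p(\OO))$, which is (4). (Measurability and adaptedness of $F,G,F_n,G_n$ in $(t,\om)$ are trivial, these functions being deterministic and time-independent.)

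With (1)--(4) established, Theorems~\ref{t.coeffdep} and~\ref{t.hoelderconv} — the latter for any $0\le\lambda<\tfrac12$, which is the H\"older exponent afforded by the Lipschitz regularity theory in the case of finite-dimensional noise — give the two convergence statements. The step I expect to be the main obstacle is the operator-theoretic one, specifically obtaining the \emph{uniform} $H^\infty$-calculus and, above all, the strong resolvent convergence \emph{on $L^p(\OO)$} rather than merely on $L^2(\OO)$: both hinge on the uniform Gaussian heat-kernel bounds for the family $\{\mathcal{A}_n\}$, the one genuinely non-elementary ingredient. Once these bounds are invoked, the passage from $L^2(\OO)$ to $L^p(\OO)$ and the verification of the nonlinear hypotheses are routine.
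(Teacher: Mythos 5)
Your overall strategy is the same as the paper's: cast the SPDE as \eqref{SDE} on $E=L^p(\OO)$ and verify (A1), (A2), (F1), (F2), (G1), (G2) so that Theorems \ref{t.coeffdep} and \ref{t.hoelderconv} apply. For the linear part you take a genuinely different route: uniform Gaussian heat-kernel bounds, a uniform $H^\infty$-calculus, and interpolation to pass the $L^2$ resolvent convergence to $L^p$, whereas the paper (Lemma \ref{l.apA}) gets uniform sectoriality on $L^r$ by Stein interpolation between the $L^2$ sector bound and Daners' $L^s$ bound, and gets strong resolvent convergence on $L^r$ from Daners' form-convergence theorem together with the compactness of $H^1_0(\OO)\embed L^2(\OO)$. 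Your route is viable (and note that the abstract theorems only require (A1)--(A2); the bounded $H^\infty$-calculus you insist on is not needed, so that step is heavier than necessary but not wrong). However, two points in your verification are genuine gaps.

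First, hypothesis (G1) is \emph{not} plain Lipschitz continuity of $G$ from $L^p(\OO)$ into $\gamma(\R^K,L^p(\OO))$: condition (G) demands $\gamma$-Lipschitz continuity, i.e.\ estimates of $\n G(\cdot,\phi_1)-G(\cdot,\phi_2)\n_{\gamma(L^2((0,T),\mu;\R^K),L^p(\OO))}$ in terms of $\n\phi_1-\phi_2\n_{L^2_\gamma((0,T),\mu;L^p(\OO))}$, uniformly over all finite measures $\mu$. For $p\ge 2$ the space has type $2$ and your pointwise Lipschitz bound suffices, but the theorem allows $1<p<2$, where Lipschitz does not imply $\gamma$-Lipschitz in general; one needs the square-function description of $\gamma(L^2(\mu),L^p)$ for Nemytskii maps, which is exactly what the paper imports via \cite[Example 5.5]{vNVW08} in Lemma \ref{l.apFG}. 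Likewise (G2) has to be checked in $\gamma(L^2((0,T),\mu;\R^K),L^p(\OO))$, not merely in $\gamma(\R^K,L^p(\OO))$ (this again follows by dominated convergence once the square-function norm is in place, but it is not what you proved). Second, the exponents: you identify the convergence $\xi_n\to\xi$ in $L^p(\OO)$ with "the required convergence of the initial data in $E$", but the abstract theorems require $\xi_n\to\xi$ in $L^{p_0}(\Omega,\F_0;E)$ for a \emph{moment} exponent $p_0$ satisfying $p_0>2$ and $\frac1{p_0}<1-\frac1\tau$, and they only yield $q<p_0$ and $\lambda<\frac12-\frac1{p_0}$. As written, your argument gives a restricted range of $q$ and $\lambda$, and the parenthetical appeal to "Lipschitz regularity theory for finite-dimensional noise" is not a justification for the full range $q<\infty$, $\lambda<\frac12$ claimed in Theorem \ref{thm:main}. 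The missing observation is that the initial data here are deterministic, so $\xi_n\to\xi$ in $L^{p_0}(\Omega;L^p(\OO))$ for \emph{every} $p_0<\infty$; applying Theorem \ref{thm:5.3} with $p_0$ arbitrarily large (the constraint $p_0>\max\{2,p'\}$ is then harmless) gives all $1\le q<\infty$ and all $0\le\lambda<\frac12$. With these two repairs your proposal matches the paper's proof in substance.
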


A slightly more general version of this theorem allowing for random initial conditions
is presented below (Theorem \ref{thm:5.3}).

It is possible to extend our results to SPDEs with locally Lipschitz continuous
nonlinearities, measurable initial values, and infinite-dimensional noise; also regularity in both space and time can be accounted for. These extensions involve
the use of interpolation techniques and require additional assumptions on the 
domains $\Dom(A)$ and $\Dom(A_n)$. In order to keep this article at a reasonable 
length we have chosen to postpone these extensions to a forthcoming publication \cite{forthcoming}. 

The organization of the paper is as follows. 
In Section \ref{sect.preliminaries} we prove an abstract approximation result for certain spaces of $\gamma$-radonifying operators. After recalling some 
results about solving the abstract problem 
\eqref{SDE} in Section \ref{sect.solution}, we prove our main abstract approximation results in Section \ref{sect.dependence}. 
Theorem \ref{thm:main} is proved in Section \ref{sect.applications}, where some
further applications are presented as well.

Throughout this article, all vector spaces are real. Whenever this is 
needed, e.g. when using spectral theory, we shall pass to their complexifications.
We assume the reader to be familiar with standard Banach space concepts such as
the UMD property and the notions of type and cotype. 
For more information we recommend the survey articles by Burkholder and Maurey in the Handbook of Geometry of Banach Spaces  \cite{handbook1,handbook2}. 

When $P_n(\phi)$ and $Q_n(\phi)$ are certain quantities depending on an index $n$ and a function $\phi$, we use the notation 
$P_n(\phi)\lesssim Q_n(\phi)$ to indicate that there is a constant $C$, independent of 
$\phi$, such that $P_n(\phi)\le
CQ_n(\phi)$ holds for all indices $n$.  
Unless otherwise stated this constant is allowed to depend on all other relevant
data. We write $P_n(\phi)\eqsim Q_n(\phi)$ if $P_n(\phi)\lesssim Q_n(\phi)$  
and $Q_n(\phi)\lesssim P_n(\phi)$.

\section{Approximation of $\gamma$-radonifying
operators}\label{sect.preliminaries}

We begin with a brief discussion of spaces of $\g$-radonifying operators, which
play an important role in the theory of stochastic integration in UMD Banach
spaces.

Let $\mathscr{H}$ be a Hilbert space (below we shall take $\mathscr{H} =
L^2(0,T;H)$, where $H$ is another Hilbert 
space) and $E$ be a Banach space.
Any finite rank operator $R:\mathscr{H}\to E$ can be represented in the form
$\sum_{n=1}^N h_n\otimes x_n$, where the vectors $h_n$ are orthonormal in
$\mathscr{H}$ and the vectors $x_n$ 
belong to $E$. For such an operator we define
\[ \norm{R}_{\gamma (\mathscr{H},E)}^2 := \expect\Big\|
\sum_{n=1}^N\gamma_nRh_n\Big\|^2.\]
Here, and in what follows, $(\gamma_n)_{n=1}^N$ is a
sequence of independent real-valued standard Gaussian random variables. 

It is easy to check that the above identity defines a norm on the space
$\H\otimes E$ of all finite rank 
operators from $\mathscr{H}$ to $E$. The completion of  $\H\otimes E$ with
respect to this norm is 
denoted by $\gamma(\mathscr{H},E)$. This space is contractively embedded into
$\mathscr{L}(\mathscr{H},E)$. 
A bounded operator in $\calL(\H,E)$ is called {\em $\gamma$-radonifying} if it
belongs to $\g(\mathscr{H},E)$. 

For all $R\in \gamma(\H,E)$ we have the identity
$$ \n R \n_{\g(\H,E)}^2 = \sup_h \E \Big\n \sum_{j=1}^k \g_j R h_j \Big\n^2,$$
where the supremum is taken over all finite orthonormal systems 
$h = \{h_j\}_{j=1}^k$ in $\H$. 
A bounded operator $R$ from $\H$ to $E$ is called {\em $\g$-summing} if
the above supremum is finite. This supremum, denoted by
$\n R\n_{\gamma_\infty(\H,E)}$, turns the space of all $\g$-summing operators
from $\H$ to $E$ into a Banach space. By definition we have an isometric 
inclusion $\gamma(\H,E)\subseteq \gamma_\infty(\H,E)$.
It follows from a result of Hoffmann-J{\o}rgensen and Kwapie\'n \cite{HofJor, Kwa}
that $\g(\H,E)=\g_\infty(\H,E)$ if (and only if) $E$ does not contain a 
closed subspace isomorphic to $c_0$.

The space $\gamma (\mathscr{H},E)$ enjoys
the following {\it ideal property}: if
$T \in \mathscr{L}(\mathscr{H}_2,
\mathscr{H}_1)$, $S\in \mathscr{L}(E_1,E_2)$ and $R\in \gamma
(\mathscr{H}_1,E_1)$, where $\mathscr{H}_1,
\mathscr{H}_2$ are Hilbert spaces and $E_1,E_2$ are Banach spaces, 
then $SRT \in \gamma (\mathscr{H}_2,E_2)$ and 
\[ \norm{SRT}_{\gamma (\mathscr{H}_2,E_2)}\leq
\norm{S}_{\mathscr{L}(E_1,E_2)}\norm{R}_{\gamma (\mathscr{H}_1,E_1)}
 \norm{T}_{\mathscr{L}(\mathscr{H}_2,\mathscr{H}_1)}.
\]
The analogous result holds for the space $\gamma_\infty(\H,E)$.

For more information and proofs we refer to the review article \cite{vNsurvey}
and the references given therein.

We are mainly interested in the case $\H = L^2(X,\mu;H)$, where $\mu$ is a 
$\sigma$-finite measure on some measurable space $X$, and $H$ is another 
Hilbert space. In this situation we say that a function 
$\Phi:X\mapsto \calL(H,E)$ {\em represents} a bounded
operator 
$R: L^2(X,\mu;H)\to E$ if the following two conditions are satisfied:
\begin{enumerate}
\item[\rm(i)] For all $x\s\in E\s$ the function $t\mapsto \Phi\s(t) x\s$ belongs
to $L^2(X,\mu;H)$; 
\item[\rm(ii)] For all $f\in L^2(X,\mu;H)$ and $x\s\in E\s$ we have
$$\lb R f,x\s\rb = \int_X [f(t), \Phi\s(t)x\s]\,d\mu(t).$$
\end{enumerate}
If $t\mapsto \Phi(t)h$ is strongly measurable for all $h\in H$, then the
operator $R$ is uniquely determined by $\Phi$; see \cite{vNVW07}.

It will be important to have criteria for checking whether a given function
$\Phi:X\to\calL(H,E)$ represents
an operator in the space $\g (L^2(X,\mu;H),E)$. We begin
with the following simple result; see \cite{DNW}.

\begin{prop}\label{l.tensor} 
For all $f\in L^2(X;\mu)$ and $S \in \g (H,E)$ the function $f\otimes S: t\mapsto f(t)S$
represents a unique operator 
 $R_{f\otimes S}\in \g(L^2(X,\mu;H),E)$, which given by
$$ R_{f\otimes S} g = \int_X f(t) Sg(t)\,d\mu(t),$$ and we have 
$$ \big\n R_{f\otimes S}\big\n_{\g(L^2(X,\mu;H),E)} = 
\n f\n_{L^2(X,\mu)}\n S\n_{\ghe}.$$
\end{prop}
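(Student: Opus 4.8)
The plan is to exhibit the operator $R_{f\ot S}$ explicitly, verify that $f\ot S$ represents it, reduce the $\g$-norm identity to the case of finite rank $S$ by a suitable choice of orthonormal system in $L^2(X,\mu;H)$, and then pass to the limit.

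First I would \emph{define} $R := R_{f\ot S}$ by the Bochner integral $Rg := \int_X f(t)Sg(t)\,d\mu(t)$. This is legitimate: the integrand $t\mapsto f(t)Sg(t)$ is strongly measurable, and by the Cauchy--Schwarz inequality
$$\int_X \n f(t)Sg(t)\n_E\,d\mu(t)\le \n S\n_{\calL(H,E)}\int_X|f(t)|\,\n g(t)\n_H\,d\mu(t)\le \n f\n_{L^2(X,\mu)}\n S\n_{\calL(H,E)}\n g\n_{L^2(X,\mu;H)},$$
so that $R\in\calL(L^2(X,\mu;H),E)$. To see that $f\ot S$ represents $R$: for $x\s\in E\s$ we have $(f\ot S)\s(t)x\s = f(t)S\s x\s$, which belongs to $L^2(X,\mu;H)$ with norm $\n f\n_{L^2(X,\mu)}\n S\s x\s\n_H$, so condition (i) holds; and pairing $Rg$ with $x\s$ and moving the functional inside the integral gives $\lb Rg,x\s\rb = \int_X f(t)[g(t),S\s x\s]\,d\mu(t) = \int_X [g(t),(f\ot S)\s(t)x\s]\,d\mu(t)$, which is condition (ii). Since $t\mapsto f(t)Sh$ is strongly measurable for each $h\in H$, uniqueness of the represented operator follows from the fact recalled just before the proposition.

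Next, for the norm identity I would assume $f\not\equiv 0$ (the case $f=0$ being trivial) and first treat finite rank $S = \sum_{n=1}^N h_n\ot x_n$, where $(h_n)_{n=1}^N$ is orthonormal in $H$ and $x_n\in E$, so that $\n S\n_{\ghe}^2 = \E\Big\|\sum_{n=1}^N\g_n x_n\Big\|^2$. The key observation is that the functions $e_n := \n f\n_{L^2(X,\mu)}^{-1}\,f\ot h_n$, $n=1,\dots,N$, form an orthonormal system in $L^2(X,\mu;H)$, and that, using $\lb g,e_n\rb_{L^2(X,\mu;H)} = \n f\n_{L^2(X,\mu)}^{-1}\int_X f(t)[g(t),h_n]_H\,d\mu(t)$, one checks that $Re_n = \n f\n_{L^2(X,\mu)}\,x_n$ while $Rg = 0$ whenever $g\perp\lh\{e_1,\dots,e_N\}$. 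Hence $R = \sum_{n=1}^N e_n\ot\big(\n f\n_{L^2(X,\mu)}\,x_n\big)$ is a finite rank operator already written in the normalised form of the definition of the $\g$-norm, so
$$\n R\n_{\g(L^2(X,\mu;H),E)}^2 = \E\Big\|\sum_{n=1}^N\g_n\,\n f\n_{L^2(X,\mu)}\,x_n\Big\|^2 = \n f\n_{L^2(X,\mu)}^2\,\n S\n_{\ghe}^2.$$

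Finally I would remove the finite rank assumption by approximation: choosing finite rank operators $S_k\to S$ in $\ghe$ and using that $S\mapsto R_{f\ot S}$ is linear, the identity just proved gives $\n R_{f\ot S_k}-R_{f\ot S_j}\n_{\g(L^2(X,\mu;H),E)} = \n f\n_{L^2(X,\mu)}\n S_k-S_j\n_{\ghe}\to 0$, so $(R_{f\ot S_k})$ converges in $\g(L^2(X,\mu;H),E)$; via the contractive embeddings $\ghe\inject\calL(H,E)$ and $\g(L^2(X,\mu;H),E)\inject\calL(L^2(X,\mu;H),E)$ this limit must be $R_{f\ot S}$, whence $R_{f\ot S}\in\g(L^2(X,\mu;H),E)$ and the norm identity persists in the limit. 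I expect the only genuinely delicate point to be the construction of the orthonormal system $(e_n)$ and the verification that $R_{f\ot S}$ annihilates its orthogonal complement; once that is in place the \emph{exact} norm identity (not merely an estimate) drops out, and everything else is routine bookkeeping with Cauchy--Schwarz and the ideal and embedding properties of $\g$-radonifying operators.
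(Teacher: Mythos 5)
Your argument is correct and complete: the direct verification that $f\ot S$ represents the Bochner-integral operator, the reduction to finite rank $S$ via the orthonormal system $e_n=\n f\n_{L^2(X,\mu)}^{-1}\,f\ot h_n$ (checking $Re_n=\n f\n_{L^2(X,\mu)}x_n$ and $R=0$ on the orthogonal complement), and the density-plus-contractive-embedding step to pass to general $S\in\g(H,E)$ all go through. Note that the paper itself offers no proof of this proposition, referring instead to the literature; your proof is precisely the standard argument behind that reference, so there is nothing to compare beyond saying that you have supplied, correctly, the expected details.
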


The following sufficient condition for a function $\Phi:(a,b)\to\calL(H,E)$ to represent an
element of $\g (L^2(a,b;H),E)$ is
a simple extension of a result due to Kalton and Weis \cite{kwpre}. For the
proof we refer to
\cite[Proposition 13.9]{vNsurvey}.

\begin{prop}\label{p.gammaest}
Let $\Phi : (a,b) \to \ghe$ be continuously differentiable with $$\int_a^b
(t-a)^{\half}
\norm{\Phi '(t)}_{\ghe}\, dt < \infty.$$ 
Then $\Phi $ represents a unique operator $R_\Phi\in \gamma (L^2(a,b;H),E)$ and
\[ \norm{R_\Phi }_{\gamma (L^2(a,b;H),E)} \leq
(b-a)^\half\norm{\Phi(b-)}_{\ghe}+\int_a^b(t-a)^\half
 \norm{\Phi'(t)}_{\ghe}\, dt . 
\]
\end{prop}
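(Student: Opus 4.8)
\textbf{Proof plan for Proposition \ref{p.gammaest}.}

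The plan is to reduce the statement to Proposition \ref{l.tensor} by writing $\Phi$ as an integral of elementary tensors, using the fundamental theorem of calculus. First I would normalize to the interval $(0,b-a)$ by translation; so assume $a=0$ and write $\ell = b$. The hypothesis $\int_0^\ell t^{1/2}\n\Phi'(t)\n_{\ghe}\,dt<\infty$ together with continuity of $\Phi'$ forces $\Phi$ to have a limit $\Phi(b-)$ at the right endpoint (the integrability near $0$ is automatic from continuity, and near $b$ one uses that $t^{1/2}$ is bounded below there), so $\Phi(b-)$ is well-defined. For $t\in(0,\ell)$ write
\[
\Phi(t) = \Phi(b-) - \int_t^\ell \Phi'(s)\,ds = \Phi(b-) - \int_0^\ell \one_{(t,\ell)}(s)\,\Phi'(s)\,ds,
\]
the integral converging absolutely in $\ghe$.

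Next I would interpret each piece as a $\g$-radonifying operator on $L^2(0,\ell;H)$. The constant term $\Phi(b-)$ is the tensor $\one_{(0,\ell)}\otimes\Phi(b-)$, which by Proposition \ref{l.tensor} represents an operator of $\g(L^2(0,\ell;H),E)$-norm exactly $\ell^{1/2}\n\Phi(b-)\n_{\ghe}$. For the integral term, the key observation is that for fixed $s$, the function $t\mapsto \one_{(0,s)}(t)\,\Phi'(s)$ is the tensor $\one_{(0,s)}\otimes\Phi'(s)$, representing an operator $R_s\in\g(L^2(0,\ell;H),E)$ with
\[
\n R_s\n_{\g(L^2(0,\ell;H),E)} = \n\one_{(0,s)}\n_{L^2(0,\ell)}\,\n\Phi'(s)\n_{\ghe} = s^{1/2}\,\n\Phi'(s)\n_{\ghe}.
\]
Since $\int_0^\ell s^{1/2}\n\Phi'(s)\n_{\ghe}\,ds<\infty$, the Bochner integral $\int_0^\ell R_s\,ds$ converges absolutely in the Banach space $\g(L^2(0,\ell;H),E)$ and has norm at most $\int_0^\ell s^{1/2}\n\Phi'(s)\n_{\ghe}\,ds$. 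A Fubini-type argument then identifies this Bochner integral as the operator represented by $t\mapsto \int_0^\ell\one_{(t,\ell)}(s)\Phi'(s)\,ds$: testing against $f\in L^2(0,\ell;H)$ and $x^*\in E^*$, both sides equal $\int_0^\ell\int_t^\ell [f(t),(\Phi'(s))^*x^*]\,ds\,dt$, where swapping the order of integration is justified by absolute integrability (using $\int |[f(t),(\Phi'(s))^*x^*]| \le \n f(t)\n\,\n\Phi'(s)\n\,\n x^*\n$ and Cauchy--Schwarz in $t$). Adding the two contributions and using the triangle inequality in $\g(L^2(0,\ell;H),E)$ gives that $\Phi$ represents $R_\Phi := \one_{(0,\ell)}\otimes\Phi(b-) - \int_0^\ell R_s\,ds$ with the claimed bound; uniqueness follows from the strong measurability remark preceding the proposition.

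The main obstacle is the Fubini/measurability bookkeeping: one must check that $t\mapsto\int_t^\ell\Phi'(s)\,ds$ is genuinely strongly measurable and that it represents the Bochner integral $\int_0^\ell R_s\,ds$ in the precise sense of conditions (i)--(ii) for representing operators. This is routine but needs the absolute-integrability estimate above to license interchanging the duality pairing, the $\mu$-integral, and the $s$-integral. Everything else is a direct application of Proposition \ref{l.tensor}, the triangle inequality, and the completeness of $\g(L^2(0,\ell;H),E)$.
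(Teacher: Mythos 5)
Your argument is correct and is essentially the proof the paper points to (it cites \cite[Proposition 13.9]{vNsurvey} rather than proving the statement itself): write $\Phi(t)=\Phi(b-)-\int_t^b\Phi'(s)\,ds$, view each slice as the elementary tensor $\one_{(a,s)}\otimes\Phi'(s)$ of $\gamma$-norm $(s-a)^{\half}\norm{\Phi'(s)}_{\ghe}$ by Proposition \ref{l.tensor}, and assemble via an absolutely convergent Bochner integral in $\gamma(L^2(a,b;H),E)$ together with the Fubini identification of the represented function, which your absolute-integrability estimate indeed licenses (and which, e.g.\ via Minkowski's integral inequality, also yields condition (i) of the representation). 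One minor nit: the parenthetical claim that ``integrability near $0$ is automatic from continuity'' is neither true on the open interval nor needed --- the existence of $\Phi(b-)$ and of $\int_t^b\Phi'(s)\,ds$ for fixed $t>a$ uses only that $(s-a)^{\half}$ is bounded below away from $a$, as you also observe.
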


A subset $\mathscr{T}\subseteq \mathscr{L}(E,F)$, where $E,F$ are Banach spaces,
is called {\it $\gamma$-bounded}, if there exists $C\geq 0$ such that for all
finite sequences
$x_1,\dots,x_N \in E$ and $T_1,\dots,T_N \in \mathscr{T}$ we have
\[ \expect\Big\|\sum_{n=1}^N\gamma_nT_nx_n\Big\|^2\leq C^2\expect\Big\|
 \sum_{n=1}^N \gamma_nx_n\Big\|^2.
\]
The infimum over all admissible constants $C$ is called the {\it $\gamma$-bound}
of $\mathscr{T}$
and is denoted by $\gamma (\mathscr{T})$.
For more information on $\g$-boundedness and the related notion of
$R$-boundedness we refer to \cite{CPSW} and the lecture notes \cite{kunweis}.
We will need the following elementary fact.

\begin{prop}\label{prop.g-bound-cov} If $\mathscr{T}$ is $\g$-bounded, then the
closure in the strong operator topology of its absolute convex hull is
$\g$-bounded as well, and
$\g(\overline{\rm co}(\mathscr{T})) = \g(\mathscr{T})$.
\end{prop}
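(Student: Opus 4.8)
The plan is to reduce the statement to its two separate assertions: first, that $\overline{\conv}(\mathscr{T})$ is $\gamma$-bounded with $\gamma(\overline{\conv}(\mathscr{T})) \le \gamma(\mathscr{T})$; and second, the reverse inequality, which is trivial since $\mathscr{T} \subseteq \overline{\conv}(\mathscr{T})$ and the $\gamma$-bound is clearly monotone under inclusion. So the only real content is the first inequality, and within it there are three layers: passing to the convex hull, passing to the absolute (i.e. balanced) convex hull, and finally passing to the closure in the strong operator topology. I would handle them in that order.

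For the convex hull, fix $T_1, \dots, T_N$ in $\conv(\mathscr{T})$ and $x_1, \dots, x_N \in E$. Each $T_n$ is a finite convex combination $T_n = \sum_{k} \lambda_{n,k} S_{n,k}$ with $S_{n,k} \in \mathscr{T}$ and $\lambda_{n,k} \ge 0$, $\sum_k \lambda_{n,k} = 1$. The key device is the standard one for $R$/$\gamma$-boundedness arguments: one writes the randomized sum $\sum_n \gamma_n T_n x_n = \sum_n \gamma_n \sum_k \lambda_{n,k} S_{n,k} x_n$ and wants to dominate its $L^2(\Omega;E)$-norm. The cleanest route is to use a second independent Rademacher or Gaussian sequence, or more directly to observe that by convexity and the triangle inequality in $L^2(\Omega;E)$ one can reduce to the case where each $T_n$ is a single element of $\mathscr{T}$ — concretely, by applying the $\gamma$-boundedness of $\mathscr{T}$ to the enlarged (finite) family $\{S_{n,k}\}$ paired with suitably scaled copies of the $x_n$, after first using a Fubini/averaging argument over an auxiliary probability space that realizes the convex weights. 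One small technical point worth flagging: a priori the number of terms in the convex combination defining $T_n$ depends on $n$, so one should combine them into one index set; this is routine. The upshot is $\expect\|\sum_n \gamma_n T_n x_n\|^2 \le \gamma(\mathscr{T})^2 \expect\|\sum_n \gamma_n x_n\|^2$.

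For the absolute convex hull one additionally allows multiplication of the generators by scalars of modulus $\le 1$ (here, since all spaces are real, by signs $\pm 1$, or reals in $[-1,1]$). This costs nothing: replacing $T_n$ by $c_n T_n$ with $|c_n| \le 1$ is absorbed either by replacing $x_n$ with $c_n x_n$, using that $\expect\|\sum_n \gamma_n c_n x_n\|^2 \le \expect\|\sum_n \gamma_n x_n\|^2$ (contraction principle for Gaussians, or the fact that $(\gamma_n)$ and $(c_n \gamma_n)$ have the same distribution when $|c_n|=1$ and a convexity argument for general $|c_n|\le 1$), or by a direct contraction-principle estimate. Finally, for the strong-operator-topology closure: fix $T_1, \dots, T_N \in \overline{\conv}(\mathscr{T})$ and $x_1, \dots, x_N \in E$, and choose nets (or, since we only test against finitely many vectors $x_n$, sequences suffice here) $T_n^{(j)} \in \conv_{\mathrm{abs}}(\mathscr{T})$ with $T_n^{(j)} x_n \to T_n x_n$ in $E$ as $j \to \infty$. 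Then $\sum_n \gamma_n T_n^{(j)} x_n \to \sum_n \gamma_n T_n x_n$ in $L^2(\Omega;E)$ (finite sum, dominated convergence using $\|\gamma_n\|_{L^2} < \infty$), so the inequality $\expect\|\sum_n \gamma_n T_n^{(j)} x_n\|^2 \le \gamma(\mathscr{T})^2 \expect\|\sum_n \gamma_n x_n\|^2$ passes to the limit. This gives $\gamma(\overline{\conv}(\mathscr{T})) \le \gamma(\mathscr{T})$, and combined with the trivial reverse inequality we are done.

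The main obstacle, such as it is, is purely bookkeeping in the convex-hull step: making the averaging/Fubini argument over the auxiliary weights precise while keeping track that the constant does not deteriorate, and handling the $n$-dependent lengths of the convex combinations uniformly. There is no genuine analytic difficulty — this is an "elementary fact" as stated — and in fact this proposition is essentially a known lemma (see \cite{CPSW, kunweis}); one could alternatively just cite it. I would include the short argument above for completeness rather than rely on the citation.
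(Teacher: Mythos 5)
Your proposal is correct, and in fact the paper offers no proof at all: Proposition \ref{prop.g-bound-cov} is stated as an ``elementary fact'' with the surrounding references to \cite{CPSW} and \cite{kunweis}, so your argument simply supplies the standard proof found there (convex combinations via an averaging/Jensen argument over an auxiliary probability space realizing the weights, scalars of modulus at most one via the Gaussian contraction principle, and the strong-operator closure by testing against the finitely many vectors $x_n$ and passing to the limit in $L^2(\Omega;E)$). Two small points to tighten if you write it out: the alternative you mention in passing --- applying $\gamma$-boundedness directly to the enlarged family $\{S_{n,k}\}$ with rescaled vectors $\lambda_{n,k}x_n$ --- does not work as stated, because the randomized sum $\sum_{n,k}\gamma_n S_{n,k}(\lambda_{n,k}x_n)$ repeats the Gaussian $\gamma_n$ over $k$ and so is not of the form required by the definition; the Jensen/averaging route you describe is the one to use. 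Also, an element of the absolutely convex hull is $\sum_k c_k S_k$ with $\sum_k|c_k|\le 1$, which is not merely a scalar multiple of an element of $\mathrm{co}(\mathscr{T})$; the clean order is to first pass to $[-1,1]\cdot\mathscr{T}$ (same $\gamma$-bound by the contraction principle, padding with the zero operator if needed) and then take the convex hull, which is exactly what your termwise sign-absorption accomplishes once made explicit.
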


We will also need the following sufficient condition for $\g$-boundedness due to
Weis \cite{weis01}.

\begin{prop}\label{p.integrable-derivative}
Suppose $\Phi:(a,b)\to \calL(H,E)$ is continuously differentiable. If
$\Phi'$ is integrable, then the family $\mathscr{T}_\Phi:= \{\Phi(t):\
t\in(a,b)\}$ is $\g$-bounded and
$$\g(\mathscr{T}_\Phi)\le  \n \Phi(a+) \n + \int_a^b \n \Phi'(t)\n\,dt.$$
\end{prop}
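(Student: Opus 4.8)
The plan is to write $\Phi(t)$ as a fixed "base point" plus an integral of its derivative, and then use the fundamental property that averages (and integrals) of operators against a probability measure do not increase the $\gamma$-bound. Concretely, for $t\in(a,b)$ we have the absolutely convergent representation
\[
\Phi(t) = \Phi(a+) + \int_a^t \Phi'(s)\,ds,
\]
where $\Phi(a+):=\lim_{s\downarrow a}\Phi(s)$ exists in $\mathscr{L}(H,E)$ precisely because $\Phi'$ is integrable near $a$ (so $s\mapsto\Phi(s)$ is Cauchy as $s\downarrow a$). Rescaling, write $\int_a^t\Phi'(s)\,ds = (t-a)\,\fint_a^t \Phi'(s)\,ds$, i.e. $(t-a)$ times an average of $\Phi'$ over $(a,t)$ with respect to normalised Lebesgue measure. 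Hence each $\Phi(t)$ lies in the set
\[
\mathscr{U} := \Big\{\, \Phi(a+) + \theta\, V : \theta\in[0,1],\ V\in \overline{\mathrm{co}}\{\Phi'(s): s\in(a,b)\}\ \text{(strong closure of convex hull)}\,\Big\},
\]
at least after one observes that the Bochner average $\fint_a^t\Phi'(s)\,ds$ belongs to the strongly closed convex hull of $\{\Phi'(s):s\in(a,b)\}$; this is the standard mean-value-in-the-closed-convex-hull fact for Bochner integrals, applied in the strong operator topology.

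Next I would estimate the $\gamma$-bound of $\mathscr{U}$. The singleton $\{\Phi(a+)\}$ has $\gamma$-bound $\n\Phi(a+)\n$. The family $\{\Phi'(s):s\in(a,b)\}$ need not be $\gamma$-bounded a priori, so instead I bound things directly: for finite sequences $x_1,\dots,x_N\in H$ and $t_1,\dots,t_N\in(a,b)$,
\[
\Big(\E\Big\n\sum_{n=1}^N\gamma_n\Phi(t_n)x_n\Big\n^2\Big)^{1/2}
\le \Big(\E\Big\n\sum_{n=1}^N\gamma_n\Phi(a+)x_n\Big\n^2\Big)^{1/2}
+ \Big(\E\Big\n\sum_{n=1}^N\gamma_n\Big(\int_a^{t_n}\Phi'(s)\,ds\Big)x_n\Big\n^2\Big)^{1/2},
\]
by the triangle inequality in $L^2(\Omega;E)$. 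The first term is $\le \n\Phi(a+)\n\,(\E\n\sum_n\gamma_n x_n\n^2)^{1/2}$. For the second, pull the (Bochner) integral out of the Gaussian sum and use the $L^2(\Omega;E)$-triangle inequality in integral form (Minkowski's integral inequality):
\[
\Big(\E\Big\n\sum_{n=1}^N\gamma_n\Big(\int_a^{b}\one_{(a,t_n)}(s)\Phi'(s)\,ds\Big)x_n\Big\n^2\Big)^{1/2}
\le \int_a^b \Big(\E\Big\n\sum_{n=1}^N\gamma_n\,\one_{(a,t_n)}(s)\,\Phi'(s)x_n\Big\n^2\Big)^{1/2}\,ds.
\]
For fixed $s$, the inner expression involves the single operator $\Phi'(s)$ applied to the vectors $\one_{(a,t_n)}(s)x_n$; by the contraction principle for Gaussian sums (dropping a subset of indices only decreases the $L^2$ norm) it is $\le \n\Phi'(s)\n\,(\E\n\sum_n\gamma_n x_n\n^2)^{1/2}$. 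Integrating in $s$ gives the factor $\int_a^b\n\Phi'(s)\n\,ds$. Combining the two estimates yields
\[
\Big(\E\Big\n\sum_{n=1}^N\gamma_n\Phi(t_n)x_n\Big\n^2\Big)^{1/2}
\le \Big(\n\Phi(a+)\n + \int_a^b\n\Phi'(s)\n\,ds\Big)\Big(\E\Big\n\sum_{n=1}^N\gamma_n x_n\Big\n^2\Big)^{1/2},
\]
which is exactly the claimed $\gamma$-bound.

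I expect the only genuinely delicate point to be the measurability/integrability bookkeeping needed to justify pulling the Bochner integral out of the Gaussian average and applying Minkowski's integral inequality — one must know that $s\mapsto \sum_n\gamma_n\one_{(a,t_n)}(s)\Phi'(s)x_n$ is strongly measurable as an $L^2(\Omega;E)$-valued function and that its norm is integrable, which follows from continuity of $\Phi'$ and its integrability hypothesis. Everything else (the contraction principle for Gaussian sums, the triangle inequality in $L^2(\Omega;E)$, and the fundamental theorem of calculus in Banach spaces for the $C^1$ function $\Phi$) is routine. Alternatively, one can bypass the direct computation by invoking Proposition \ref{prop.g-bound-cov}: the estimate above shows $\{(t-a)\,\fint_a^t\Phi'(s)\,ds : t\in(a,b)\}$ sits in the scaled strongly closed convex hull of $\{(b-a)\Phi'(s):s\in(a,b)\}$ up to the weight, but the cleanest route is the explicit two-term triangle-inequality argument just sketched, so that is the proof I would write out.
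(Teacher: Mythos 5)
Your argument is correct: the paper does not prove this proposition itself but cites it as a result of Weis \cite{weis01}, and your proof is exactly the standard one — write $\Phi(t)=\Phi(a+)+\int_a^t\Phi'(s)\,ds$, split off the single operator $\Phi(a+)$, pull the Bochner integral out of the Gaussian sum via Minkowski's integral inequality, and apply the Kahane contraction principle to the indicators $\one_{(a,t_n)}(s)$. The measurability point you flag is indeed harmless (the integrand is a finite sum of piecewise continuous $L^2(\Omega;E)$-valued functions with integrable norm), so nothing further is needed; the detour through the convex-hull set $\mathscr{U}$ can simply be dropped, since the direct two-term estimate is the whole proof.
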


In order to be able to state a second sufficient condition for being a member of
$\g(L^2(a,b;H),E)$ 
we need to make a simple preliminary observation.
Let $\Phi:(a,b)\to \calL(H,E)$ be a function of the form
$$ \Phi = \sum_{m=1}^M f_m\ot (h_m\ot x_m),$$
with $f_m\in L^2(a,b)$, $h_m\in H$, $x_m\in E$. Then $\Phi$ represents the
operator 
$$R_\Phi =\sum_{m=1}^M (f_m\ot h_m)\ot x_m$$
which is of finite rank and therefore belongs to $\g(L^2(a,b;H),E)$.
It will be important later on that the linear span of all such operators is
dense in $\g(L^2(a,b;H),E)$; see \cite{vNW05}.

Now let $F$ be a second Banach space and suppose that $M:(a,b)\to\calL(E,F)$ is
a function with the property that 
$t\mapsto M(t)x$ is strongly measurable and bounded for all $x\in E$.
If $\Phi$ is as above, 
then the function $M\Phi: t\mapsto M(t)\Phi(t)$ is strongly measurable and
represents a unique bounded 
operator $R_{M\Phi}$ from $L^2(a,b;H)$ to $F$. 

Under these assumptions one has the following result, also due to Kalton and Weis 
\cite{kwpre}; a proof can be found in \cite{vNsurvey}.

\begin{prop}\label{p.gbdd} Let $E$ and $F$ be Banach spaces and 
suppose $\Phi$ represents an operator $R_\Phi \in \g (L^2(a,b;H),E)$.
If $M$ has $\g$-bounded range $\mathscr{M}=\{M(t): \ t\in (a,b)\}$, 
then $R_{M\Phi} \in \g_\infty(L^2(a,b;H),F)$ and
$$ \n R_{M\Phi}\n_{\g_\infty(L^2(a,b;H),F)} \le \gamma(\mathscr{M})
\n R_{\Phi}\n_{\g(L^2(a,b;H),E)}.
$$
Consequently, the mapping
$R_\Phi\mapsto R_{M\Phi}$ has a unique extension
to a bounded linear operator (also denoted by M) from $\gamma (L^2(a,b;H),E)$ to
$\gamma_\infty (L^2(a,b;H),F)$ 
of norm at most $\gamma(\mathscr{M})$.
\end{prop}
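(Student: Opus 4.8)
The plan is to prove the norm estimate first for $\Phi$ ranging over the dense subspace of $\g(L^2(a,b;H),E)$ consisting of those $\Phi=\sum_{m=1}^{M}f_m\ot(h_m\ot x_m)$ with $f_m\in L^2(a,b)$, $h_m\in H$, $x_m\in E$ (for which $R_\Phi$ has finite rank), and then to pass to general $\Phi$ by density and completeness of $\g_\infty(L^2(a,b;H),F)$. Fix such a $\Phi$. Only the finitely many vectors $x_1,\dots,x_M$ are relevant, and since each $t\mapsto M(t)x_m$ is strongly measurable, hence essentially separably valued, the values $M(t)x_m$ lie in a fixed separable closed subspace $F_0\subseteq F$; writing $E_0:=\lh\{x_1,\dots,x_M\}$ we may thus treat $t\mapsto M(t)|_{E_0}$ as a bounded, strongly measurable function into the separable space $\calL(E_0,F_0)$. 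Note also that a $\g$-bounded family is uniformly bounded, so $C_M:=\sup_{t\in(a,b)}\n M(t)\n<\infty$.

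Next I would replace $M$ by simple functions. Choose a refining sequence of finite Borel partitions $(a,b)=\bigcup_i A_i^{(n)}$, all pieces of positive measure and together generating the Borel sets (for instance successive dyadic subdivisions), and set $M_i^{(n)}:=|A_i^{(n)}|^{-1}\int_{A_i^{(n)}}M(t)\,dt$, the average being taken in the strong operator topology on $\calL(E_0,F_0)$. Each $M_i^{(n)}$ is a strong limit of convex combinations of operators from $\mathscr{M}$, hence lies in $\overline{\mathrm{co}}(\mathscr{M})$, so by Proposition \ref{prop.g-bound-cov} the finite family $\{M_i^{(n)}:i\}$ has $\g$-bound at most $\gamma(\mathscr{M})$. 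Put $M_n:=\sum_i\one_{A_i^{(n)}}M_i^{(n)}$. The martingale convergence theorem gives $M_n(\cdot)x\to M(\cdot)x$ in $L^2(a,b)$ for every $x\in E_0$, and since $\n M_n(\cdot)x\n\le C_M\n x\n$, dominated convergence shows $R_{M_n\Phi}g\to R_{M\Phi}g$ in $F$ for every $g\in L^2(a,b;H)$, i.e.\ $R_{M_n\Phi}\to R_{M\Phi}$ in the strong operator topology.

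The core of the proof is the estimate for a simple multiplier $M_n$. One checks directly from the definitions that $R_{M_n\Phi}g=\sum_i M_i^{(n)}R_\Phi P_i g$, where $P_i$ is the orthogonal projection of $L^2(a,b;H)$ onto $L^2(A_i^{(n)};H)$; in particular $R_{M_n\Phi}$ restricted to the block $L^2(A_i^{(n)};H)$ equals $M_i^{(n)}R_\Phi$ restricted to that same block. Since $R_\Phi$ has finite rank, letting $V_i\subseteq L^2(A_i^{(n)};H)$ be the span of the functions $\one_{A_i^{(n)}}(f_m\ot h_m)$, $m=1,\dots,M$, both $R_\Phi$ and $R_{M_n\Phi}$ vanish on the orthogonal complement of the finite-dimensional space $V':=\bigoplus_i V_i$. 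Picking an orthonormal basis $\{e^i_j\}_j$ of each $V_i$, the union $\{e^i_j\}_{i,j}$ is an orthonormal basis of $V'$, and therefore
\[
  \n R_{M_n\Phi}\n_{\g_\infty(L^2(a,b;H),F)}^2
  =\E\Big\n\sum_{i,j}\g_{ij}M_i^{(n)}R_\Phi e^i_j\Big\n^2
  \le\gamma(\mathscr{M})^2\,\E\Big\n\sum_{i,j}\g_{ij}R_\Phi e^i_j\Big\n^2
  =\gamma(\mathscr{M})^2\,\n R_\Phi\n_{\g(L^2(a,b;H),E)}^2,
\]
the middle step being precisely the defining inequality for $\g$-boundedness of $\{M_i^{(n)}\}$ applied to the finite index set $\{(i,j)\}$, the vectors $R_\Phi e^i_j\in E$ and the operators $M_i^{(n)}$. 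Letting $n\to\infty$ and using that the $\g_\infty$-norm is lower semicontinuous for strong operator convergence (Fatou, the supremum defining it running over \emph{finite} orthonormal systems) yields $\n R_{M\Phi}\n_{\g_\infty}\le\gamma(\mathscr{M})\n R_\Phi\n_{\g}$ for all finite-rank $\Phi$.

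Finally, as the finite-rank operators are dense in $\g(L^2(a,b;H),E)$, the densely defined map $R_\Phi\mapsto R_{M\Phi}$ extends uniquely to a bounded operator from $\g(L^2(a,b;H),E)$ into the Banach space $\g_\infty(L^2(a,b;H),F)$ of norm at most $\gamma(\mathscr{M})$, and a routine limiting argument (cf.\ \cite{vNsurvey}) identifies this extension with $R_{M\Phi}$ whenever the latter is defined. I expect the delicate point to be the reduction to simple multipliers: a plain pointwise approximation of $M$ is useless on its own, and one must arrange the approximants to take values in $\overline{\mathrm{co}}(\mathscr{M})$ so that Proposition \ref{prop.g-bound-cov} keeps their $\g$-bounds uniformly under control — after which the partition-adapted choice of orthonormal basis turns the required inequality into a direct application of the definition of $\g$-boundedness.
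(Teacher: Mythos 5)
Proposition \ref{p.gbdd} is not proved in the paper at all — it is quoted from Kalton--Weis \cite{kwpre} with a reference to \cite{vNsurvey} — so there is no in-paper argument to match; what you give is a correct, self-contained rendition of the standard proof from those sources: establish the estimate on the dense class of finite-rank $\Phi$, discretize the multiplier $M$ by averages over a refining partition, and extend by density. Your key steps check out: the averages $M_i^{(n)}$, taken on the finite-dimensional space $E_0=\lh\{x_1,\dots,x_M\}$, do lie in the strong (indeed norm) closure of the convex hull of the restricted family, so Proposition \ref{prop.g-bound-cov} gives the uniform $\gamma$-bound $\gamma(\mathscr{M})$; the block identity $R_{M_n\Phi}g=\sum_i M_i^{(n)}R_\Phi P_i g$ is correct; vector-valued martingale convergence plus the bound $\sup_t\n M(t)\n<\infty$ gives $R_{M_n\Phi}\to R_{M\Phi}$ strongly; and the Fatou property of the $\gamma_\infty$-norm under strong operator convergence closes the estimate. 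Two places where you lean on unstated (though standard) facts deserve mention. First, the identity $\n R_{M_n\Phi}\n_{\gamma_\infty}^2=\E\big\n\sum_{i,j}\gamma_{ij}M_i^{(n)}R_\Phi e^i_j\big\n^2$ is not the definition of the $\gamma_\infty$-norm: it uses that for an operator vanishing on the orthogonal complement of the finite-dimensional subspace $V'$ the supremum over all finite orthonormal systems is attained at an orthonormal basis of $V'$, which follows from Gaussian covariance domination after projecting an arbitrary orthonormal system onto $V'$; this should be said explicitly (the paper does use covariance domination elsewhere, e.g.\ in Lemma \ref{l.glue}). Second, your final sentence — identifying the density extension with $R_{M\Phi}$ ``whenever the latter is defined'' — is the one part of the first assertion that is not actually argued for $\Phi$ outside the finite-rank class; this is acceptable here because the paper defines $R_{M\Phi}$ beforehand only for such $\Phi$ and the ``Consequently'' clause is precisely the density extension you prove, but a fully self-contained proof of the statement for a general representing $\Phi$ would require an additional argument at that point.
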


In our main results we shall always assume that $E=F$ is a UMD space.
Such spaces, being reflexive, cannot contain isomorphic copies of $c_0$, and therefore 
$\gamma_\infty(L^2(a,b;H),F) = \gamma(L^2(a,b;H),F)$ in this situation.

After these preparations we are in a position to state the main approximation 
lemma of this section.

\begin{lem}\label{l.gammaconv}  Let $E$ and $F$ be Banach spaces. 
Let the functions 
$M_n, M: (a,b)\to \mathscr{L}(E, F)$ satisfy the following conditions:
\begin{enumerate}
\item For all $x \in E$ the functions $M_n(\cdot )x$ and $M(\cdot )x$ are
continuously differentiable on $(a,b)$;
\item For all $x\in E$ we have $\limn M_n(t)x  =  M(t)x$ 
and $ \limn M_n'(t)x  =  M'(t)x$ uniformly on compact subsets of $(a,b)$;
\item The sets $\mathscr{M}_n = \{M_n(t): \ t\in (a,b)\}$ are $\gamma$-bounded
and $\sup_n \gamma(\mathscr{M}_n) <\infty.$
\end{enumerate}
Then for all $R\in \gamma(L^2(a,b;H),E)$ we have
$M_nR, MR \in \gamma(L^2(a,b;H),F)$ and 
\[\limn M_nR =  MR\ \  \mbox{in} \ \, \gamma (L^2(a,b;H),F) . \]
Here the operators $M_n,M: \gamma(L^2(a,b;H),E)\to \gamma_\infty(L^2(a,b;H),F)$ 
are as in Proposition \ref{p.gbdd}. 
\end{lem}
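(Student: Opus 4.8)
The plan is to use a density argument, reducing the convergence claim for general $R\in\gamma(L^2(a,b;H),E)$ to the case of the elementary tensors $R_\Phi$ with $\Phi=\sum_{m=1}^M f_m\ot(h_m\ot x_m)$, which, as recalled just before Proposition~\ref{p.gbdd}, span a dense subspace of $\gamma(L^2(a,b;H),E)$. First I would observe that hypothesis (3), together with Proposition~\ref{p.gbdd} and the fact that $F$ need not yet be UMD but the operators $M_n,M$ land in $\gamma_\infty$, gives the uniform bound $\n M_n R\n_{\gamma_\infty(L^2(a,b;H),F)}\le \big(\sup_n\gamma(\mathscr M_n)\big)\n R\n_{\gamma(L^2(a,b;H),E)}$; by Proposition~\ref{prop.g-bound-cov} the strong-operator closure of $\conv(\mathscr M_n\cup\mathscr M)$ is $\gamma$-bounded, so in particular $\mathscr M$ itself is $\gamma$-bounded (as the pointwise limit of the $M_n$, each $M(t)$ lies in that closure), and hence $MR$ is also well-defined in $\gamma_\infty$ with the same bound. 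Thus the operators $M_n-M$ are uniformly bounded from $\gamma(L^2(a,b;H),E)$ to $\gamma_\infty(L^2(a,b;H),F)$, and it suffices to prove $M_nR\to MR$ on the dense set of finite tensors.

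Next I would verify the claim on a single elementary tensor. By linearity it is enough to treat $R=R_\Phi$ with $\Phi=f\ot(h\ot x)$, i.e.\ $\Phi(t)=f(t)\,(h\ot x)$ for a fixed $f\in L^2(a,b)$, $h\in H$, $x\in E$. Then $M_n\Phi$ is represented by $t\mapsto f(t)\,(h\ot M_n(t)x)$, which by Proposition~\ref{l.tensor} (applied in the obvious way, writing $h\ot M_n(t)x$ as a rank-one operator in $\gamma(H,F)$) is again an elementary tensor; more to the point, I would use Proposition~\ref{p.gammaest} to get a quantitative $\gamma$-norm bound. Consider the difference $\Psi_n(t):=f(t)\big(h\ot(M_n(t)x-M(t)x)\big)$. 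By hypotheses (1)--(2), $t\mapsto M_n(t)x-M(t)x$ is continuously differentiable and both it and its derivative converge to $0$ uniformly on compact subsets of $(a,b)$. There is a mild technical wrinkle because the weight $(t-a)^{1/2}$ in Proposition~\ref{p.gammaest} must be integrated against $\n\Phi'\n$ over all of $(a,b)$, including the endpoints, whereas (2) only gives uniform convergence on compact subsets; I would handle this by first approximating $f$ in $L^2(a,b)$ by a function $\tilde f$ compactly supported in $(a,b)$ (using the uniform bound from the previous paragraph to control the error), so that the relevant integrals run over a fixed compact subinterval $[a',b']\subset(a,b)$. On $[a',b']$ the weight $(t-a)^{1/2}$ is bounded, $\n h\ot(M_n(t)x-M(t)x)\n_{\gamma(H,F)}=\n h\n_H\,\n M_n(t)x-M(t)x\n_F$ and the analogous derivative expression both tend to $0$ uniformly, and $\tilde f,\tilde f'$ contribute fixed $L^1$/$L^2$ factors; hence Proposition~\ref{p.gammaest} yields $\n R_{\Psi_n}\n_{\gamma(L^2(a',b';H),F)}\to 0$, and therefore $\n M_n R_{\tilde f\ot(h\ot x)}-M R_{\tilde f\ot(h\ot x)}\n_{\gamma(L^2(a,b;H),F)}\to 0$. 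Combining with the $L^2$-approximation of $f$ by $\tilde f$ and the uniform operator bound finishes the elementary-tensor case, and then density plus the uniform bound gives the result for arbitrary $R$.

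I expect the main obstacle to be precisely this interplay between the two modes of convergence: hypothesis (2) is only uniform on \emph{compact} subsets, while the natural $\gamma$-estimate (Proposition~\ref{p.gammaest}) carries a singular weight near $t=a$ and requires integrability up to $b$, so some care is needed to make the estimate on a compact subinterval and absorb the endpoint contributions into the $L^2$-error of $f$. A secondary point to state cleanly is the reduction, via Propositions~\ref{prop.g-bound-cov} and~\ref{p.gbdd}, to the $\gamma$-bounded limit $M$: one must note that $M(t)$ lies in the strong-operator closure of $\conv(\bigcup_n\mathscr M_n)$ so that $\mathscr M$ inherits $\gamma$-boundedness, which is what makes $MR\in\gamma_\infty(L^2(a,b;H),F)$ and the uniform bound on $M_n-M$ available. (When $E=F$ is UMD, as in the applications, $\gamma_\infty$ equals $\gamma$ and the statement simplifies accordingly, but the lemma as stated does not need this.)
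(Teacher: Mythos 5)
Your proposal follows essentially the same route as the paper: reduce via the uniform bound of Proposition \ref{p.gbdd} to a dense class of elementary tensors supported in a compact subinterval $[a',b']\subset(a,b)$, and there apply Proposition \ref{p.gammaest} to the difference, letting the uniform-on-compacta convergence of $M_n(\cdot)x$ and $M_n'(\cdot)x$ kill both the weighted derivative integral and the boundary term. (The paper works with step functions $\one_{(a',b')}\otimes S$, $S$ of finite rank, rather than $C^1$ truncations $\tilde f\otimes(h\otimes x)$, but this is only a cosmetic difference in the choice of dense class.)

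One justification in your write-up does not hold as stated: you derive the $\gamma$-boundedness of $\mathscr{M}=\{M(t):t\in(a,b)\}$ by placing $M(t)$ in the strong-operator closure of the absolute convex hull of $\bigcup_n\mathscr{M}_n$ and invoking Proposition \ref{prop.g-bound-cov}. That proposition applies to a single $\gamma$-bounded set, and $\sup_n\gamma(\mathscr{M}_n)<\infty$ does \emph{not} imply that the countable union $\bigcup_n\mathscr{M}_n$ is $\gamma$-bounded (finite unions add $\gamma$-bounds; for countable unions there is no control --- already a uniformly norm-bounded sequence of operators, i.e.\ a union of singletons, can fail to be $\gamma$-bounded in a general Banach space). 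The fact you need is nevertheless true and standard: fix $t_1,\dots,t_N$ and $x_1,\dots,x_N$; for each $n$ all operators $M_n(t_1),\dots,M_n(t_N)$ lie in the \emph{single} set $\mathscr{M}_n$, so
$\E\big\|\sum_{i}\gamma_i M_n(t_i)x_i\big\|^2\le C^2\,\E\big\|\sum_i\gamma_i x_i\big\|^2$ with $C=\sup_n\gamma(\mathscr{M}_n)$, and letting $n\to\infty$ (strong convergence plus dominated convergence in the finite Gaussian sum) gives the same bound for $M$, i.e.\ $\gamma(\mathscr{M})\le C$. With that repair, your uniform bound on $M_n-M$ and the density argument go through exactly as in the paper.
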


\begin{proof}
First we consider the case where $R$ is represented by the function 
$\one_{(a', b')}\otimes S$, where $a<a'<b'<b$ and 
$S\in \gamma (H,E)$ is a fixed finite rank
operator, say $S = \sum_{j=1}^k h_j\otimes x_j$ with the vectors $h_j\in H$ orthonormal.

Proposition \ref{p.gammaest} (with $a,b$ replaced with $a',b'$) 
implies that $M_n(\cdot)R$ and $M(\cdot)R$ belong to $\gamma (L^2(a',b';H),F)$, and hence to
$\gamma (L^2(a,b;H),F)$,
and by Propositions \ref{p.gbdd} and \ref{l.tensor},
\[\norm{ M_n(\cdot)R}_{\gamma (L^2(a,b;H),F)}  \leq C\norm{
\one_{(a',b')}}_{L^2(a,b)} 
\norm{S}_{\gamma (H,E)} \le C(b-a)^\frac12 \norm{S}_{\gamma (H,E)}
\] 
with $C := \sup_{n} \gamma(\mathscr{M}_n)$.
Taking strong limits and invoking Proposition \ref{prop.g-bound-cov},
$\{M(t): \ t\in (a,b)\}$ is $\gamma$-bounded with $\gamma$-bound 
at most $C$, and therefore the same estimates hold with $M_n$ replaced by $M$.

We claim that 
\[\limn M_n(\cdot)R=  M(\cdot)R \quad \mbox{in}\,\,  \gamma
(L^2(a,b;H),F) . 
\]
From the representation $S = \sum_{j=1}^k h_j\otimes x_j$ it follows that
\begin{align*} 
\ & \norm{r^{-1}\big[M_n(t+r)S - M_n(t)S\big] - M_n'(t)S}_{\gamma (H,F)}\\
& \qquad =  \Big(\expect \Big\| \sum_{j=1}^N \gamma_j \big(r^{-1}\big[M_n(t+r) -
M_n(t)\big] - M_n'(t)\big)S h_j 
\Big\|_{F}^2\Big)^\half\\
& \qquad \leq  \sum_{j=1}^N \big\n \big(r^{-1}\big[M_n(t+r) - M_n(t)\big] -
M_n'(t)\big)S h_j\big\n_{F} \to 0 
\ \ \hbox{as} \ r\to 0,
\end{align*}
for all $t \in (a,b)$ and $n\geq 1$. Hence $M_n(\cdot)S$ is differentiable on $(a, b)$ as
a $\gamma (H,F)$-valued function with derivative $M_n'(\cdot)S$. Similarly we see
that $M(\cdot )S$ is differentiable with derivative $M'(\cdot)S$, and arguing as 
above we see that $M_n'(\cdot)S$ and $M'(\cdot)S$ are continuous on $(a,b)$.
It now follows from Proposition \ref{p.gammaest} that
\begin{equation}\label{eq.domest}
 \begin{aligned} 
\norm{ M_n R - MR}_{\gamma (L^2(a,b;H),F)} & = \norm{M_nS - MS}_{\gamma (L^2(a',b';H),E)}\\
& \leq  \int_{a'}^{b'} (t-a')^\half \norm{ M_n'(t)S -
M'(t)S}_{\gamma (H,F)}\,dt\\
& \qquad  
+ (b'-a')^\half \norm{ M_n(b')S - M(b')S}_{\gamma (H,F)} ,
 \end{aligned}
\end{equation}
where the integral is finite since $M_n'(\cdot)S$ and $M'(\cdot)S$ are
continuous $\gamma(H,F)$-valued functions on $[a',b']$. 
Since $M_n'(t) \to M'(t)$ strongly for every $t \in (a,b)$ we see, using that
$S$ has finite rank as before, that $M_n'(t)S \to M'(t)S$ in $\gamma (H,F)$. 
This convergence is uniform on $[a' , b']$, 
and therefore the integral in the estimate \eqref{eq.domest} converges to 0. 
Convergence of the second term in \eqref{eq.domest} follows from  
$M_n(b') \to M(b')$ strongly and the fact that $S$ has finite
rank. This proves the claim.

Since the multiplication operators associated with $M_n$ are uniformly bounded by
Proposition \ref{p.gbdd} and assumption (3), the general case follows from a density argument.
To that end, observe that the step functions with values in the finite
rank operators and support in a proper two-sided subinterval $(a',b')$ of $(a,b)$ 
are dense in $\gamma (L^2(a,b;H),E)$. This follows, e.g.,  from \cite[Proposition 2.4]{vNVW07}.
\end{proof}

From now on, we will no longer distinguish between a function
$\Phi: (a,b) \to \mathscr{L}(H,E)$ and the operator $R_\Phi : L^2(a,b;H) \to E$
represented by it.

\section{Semilinear stochastic evolution equations}\label{sect.solution}

In this section we collect some known facts concerning the existence and
uniqueness of mild solutions of 
the problem \eqref{SDE},
\begin{equation}\label{SDE2}\tag{SCP} \left\{
\begin{aligned}
           dX(t) & =  [AX(t) + F(t,X(t))] \, dt + G(t,X(t)) \, dW_H(t),\quad t\in [0,T],\\
	    X(0) & =  \xi .
\end{aligned}\right. 
\end{equation}

The probability space
$(\Omega, \Sigma, \P)$, endowed with a filtration $\FF = (\cF_t)_{t\geq 0}$, 
is fixed throughout this paper. 
The driving process
$W_H: L^2(\R_+;H)\to L^2(\O)$ is an $\FF$-cylindrical Brownian motion, that is,
it is a bounded linear operator from $L^2(\R_+;H)$ to $L^2(\O)$ 
with the following properties:
\begin{enumerate}
\item[(i)] for all $f\in L^2(\R_+;H)$ the random variable
$W_H(f)$ is centred Gaussian
\item[(ii)] for all $t\in \R_+$ and $f\in L^2(\R_+;H)$ with support in $[0, t]$, $W_H(f)$ is $\F_t$-measurable.
\item[(iii)] for all $t\in \R_+$ and $f\in L^2(\R_+;H)$ with support in $[t, \infty)$, $W_H(f)$ is independent of $\F_t$.
\item[(iv)] for all $f_1,f_2\in L^2(\R_+;H)$ we have
$ \E (W_H(f_1)\cdot W_H(f_2)) = [f_1,f_2]_{L^2(\R_+;H)}.$
\end{enumerate}
It is easy to see that for all $h\in H$ the process $(W_H(t)h )_{t\ge 0}$ defined by
$$W_H(t)h := W_H(\one_{(0,t]}\otimes h)$$
is an $\F$-Brownian motion (which is standard if $\n h\n=1$). Moreover, two such
Brownian motions
$((W_H(t)h_1)_{t\ge 0}$  and $((W_H(t)h_2)_{t\ge 0}$ are independent if and only
if $h_1$ and $h_2$ are orthogonal in $H$. We refer to \cite{vNsurvey}
for a further discussion.

The linear operator 
$A$ is assumed to be closed and
densely on $E$,  
and the functions $F: [0,T]\times \Omega \times E\to E$ and $G: [0,T]\times
\Omega \times E\to \mathscr{L}(H,E)$ 
are strongly measurable and adapted and satisfy suitable Lipschitz and growth 
conditions specified below. 
 
Concerning the operator $A$, we make the following assumption:

\begin{enumerate}
\item[\rm(A)] The operator $A$ generates a strongly continuous analytic
semigroup $\bS=(S(t))_{t\ge 0}$ on $E$.
\end{enumerate}
Recall that a closed operator $A$ generates a strongly continuous analytic
semigroup 
on a Banach space $E$ if and only if $A$ is densely defined and sectorial, i.e.
there exist $M\ge 1$ and $w\in\R$ such that  $\{\la\in\C: \ \Re\la>w\}$ is
contained 
in the resolvent set $\varrho(A)$ and 
$$\sup_{{\rm Re}\la>w} \n (\la - w) R(\la,A)\n \le M.$$
The constants $M$ and $w$ are called the {\em sectoriality constants} of $A$;
in this context we say that $A$ is {\em sectorial of type $(M,w)$}.

If (A) holds, then $S(t)$ maps $E$ into 
the domain $\Dom(A)$ and 
$ \limsup_{t\downarrow 0} t\n AS(t)\n <\infty$. By Proposition
\ref{p.integrable-derivative} this implies 
the following useful fact (see, e.g., \cite[Lemma 4.1]{vNVW08}):

\begin{lem}\label{l.S}
 If $A$ generates a strongly continuous analytic semigroup on a Banach space
$E$, then for all $t\in [0,T]$ 
and $\a>0$ the set $\mathscr{T}_{\alpha,t} := \{s^\a S(s):\ s\in [0,t]\}$ is
$\g$-bounded and
$\g(\mathscr{T}_{\alpha,t})\le Ct^\a$, where $C$ depends on $A$ only through its
sectoriality constants.
 \end{lem}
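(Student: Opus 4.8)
The plan is to apply Proposition~\ref{p.integrable-derivative} to a suitably rescaled family of operators and then invoke Proposition~\ref{prop.g-bound-cov} to pass to the full interval $[0,t]$. The point is that $s\mapsto s^\alpha S(s)$ is not differentiable up to $s=0$ in general, so one cannot feed it directly into Proposition~\ref{p.integrable-derivative}; instead I would first handle the family on an interval bounded away from the origin and then deal with a neighbourhood of $0$ separately, or—more efficiently—use the analyticity of $\bS$ to control everything at once.

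First I would fix $t\in[0,T]$ and recall the two standard consequences of (A): the semigroup is strongly continuous, and $\limsup_{s\downarrow 0} s\,\n AS(s)\n<\infty$, say $\n AS(s)\n\le M_0/s$ for $s\in(0,T]$, where $M_0$ depends on $A$ only through its sectoriality constants (this follows from the analytic representation $S(s)=\frac{1}{2\pi i}\int_\Gamma e^{\lambda s}R(\lambda,A)\,d\lambda$ and the sectoriality estimate $\n\lambda R(\lambda,A)\n\le$ const). Consequently, for $\Phi(s):=s^\alpha S(s)$ on $(0,t)$ we have $\Phi'(s)=\alpha s^{\alpha-1}S(s)+s^\alpha AS(s)$, whence
\[
\n\Phi'(s)\n \le \alpha s^{\alpha-1}\n S(s)\n + s^{\alpha}\n AS(s)\n \le C_1 s^{\alpha-1},
\]
using that $\n S(s)\n$ is bounded on $[0,T]$ and the bound on $\n AS(s)\n$. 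Since $\alpha>0$, the function $s\mapsto s^{\alpha-1}$ is integrable on $(0,t)$, and $\int_0^t C_1 s^{\alpha-1}\,ds = (C_1/\alpha)t^\alpha$; moreover $\Phi(0+)=0$. Proposition~\ref{p.integrable-derivative} (with $(a,b)=(0,t)$) then gives that $\{\Phi(s):s\in(0,t)\}=\{s^\alpha S(s):s\in(0,t)\}$ is $\gamma$-bounded with $\gamma$-bound at most $\n\Phi(0+)\n+\int_0^t\n\Phi'(s)\n\,ds\le (C_1/\alpha)t^\alpha =: Ct^\alpha$, where $C$ depends on $A$ only through its sectoriality constants.

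It remains to include the endpoints $s=0$ (contributing the zero operator) and $s=t$. Adding the single operator $t^\alpha S(t)$, whose norm is bounded by a constant times $t^\alpha$, to a $\gamma$-bounded family keeps it $\gamma$-bounded with a $\gamma$-bound increased only by $\n t^\alpha S(t)\n\le C't^\alpha$; alternatively, and more cleanly, one observes that $\mathscr{T}_{\alpha,t}$ is contained in the closure in the strong operator topology of $\{s^\alpha S(s):s\in(0,t)\}$ (by strong continuity of $\bS$ the value at $s=t$ is a strong limit of values $s^\alpha S(s)$ with $s\uparrow t$, and the value $0$ at $s=0$ is the strong limit as $s\downarrow 0$), so Proposition~\ref{prop.g-bound-cov} yields $\gamma(\mathscr{T}_{\alpha,t})=\gamma(\{s^\alpha S(s):s\in(0,t)\})\le Ct^\alpha$. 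The main—and only real—obstacle is the non-differentiability of $s\mapsto s^\alpha S(s)$ at the origin, which is precisely what the analyticity estimate $\n AS(s)\n\lesssim 1/s$ is there to circumvent: it is what makes $\Phi'$ integrable despite the singularity. The uniformity of the constant $C$ in the sectoriality data is then automatic, since every estimate used (the bound on $\n S(s)\n$ on $[0,T]$ and the bound on $s\n AS(s)\n$) depends on $A$ only through $M$ and $w$.
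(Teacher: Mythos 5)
Your proof is correct and follows essentially the same route as the paper, which obtains Lemma \ref{l.S} precisely by combining the analytic-semigroup estimate $\limsup_{s\downarrow 0}s\n AS(s)\n<\infty$ with Proposition \ref{p.integrable-derivative} applied to $\Phi(s)=s^\alpha S(s)$. Your additional care with the endpoints via the strong-closure statement in Proposition \ref{prop.g-bound-cov} is a correct way of filling in a detail the paper leaves implicit.
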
 

Concerning $F$ and $G$ we shall assume:

\begin{itemize}
 \item[{\rm (F)}] The function $F: [0,T]\times \Omega \times E \to E$ is Lipschitz continuous 
and of linear growth in its third variable, 
uniformly in $[0,T]\times \Omega$,
i.e. there exist constants
$L_F$ and $C_F$ such that for all $t \in [0,T], \omega \in \Omega$ and $x,y \in
E$ we have
\[
 \begin{aligned}
  \big\| F(t,\omega, x) - F(t,\omega, y)\big\| & \leq L_F\n x-y\n\\
\big\| F(t,\omega, x )\big\| & \leq C_F(1+\n x\n).
 \end{aligned}
\]
Furthermore, for all $x \in E$ the map $(t,\omega) \mapsto F(t,\omega, x)$ is
strongly measurable and adapted.

 \item[{\rm (G)}] The function $G: [0,T]\times \Omega \times E \to \mathscr{L}
(H,E)$ is $\gamma$-Lipschitz continuous
and of linear growth, uniformly in $[0,T]\times \Omega$, i.e. there exist constants $L_G$
and $C_G$ such that for all finite Borel measures $\mu$ on $[0,T]$, for all $\omega \in \Omega$ and
for all $\phi_1, \phi_2,\phi \in L^2((0,T),\mu; E)\cap \g (L^2((0,T), \mu ),E)=:
L^2_\gamma ((0,T),\mu; E)$ we have
\[
 \begin{aligned}
{} \qquad  \big\n G(\cdot, \omega, \phi_1 ) - G(\cdot, \omega, \phi_2)\big\n_{\gamma
(L^2((0,T),\mu;H),E)} 
& \leq L_G\n \phi_1 -\phi_2\n_{L^2_\g ((0,T),\mu;E)}\\
\big\n G(\cdot, \omega, \phi )\big\n_{\gamma (L^2((0,T),\mu;H),E)} & \leq C_G
\big(1+ \n\phi\n_{L^2_\g ((0,T),\mu;E)}\big).
 \end{aligned}
\]
Furthermore, for all $x \in E$ and $h \in H$ the map $(t,\omega ) \mapsto
G(t,\omega,x)h$ is strongly measurable and adapted.
\end{itemize}

The notion of $\g$-Lipschitz continuity has been introduced in \cite{vNVW08},
where various characterizations and examples were given. In particular,
if $E$ is a type $2$ Banach space (e.g.\ an $L^p$-space with $2\le p<\infty)$, 
then every Lipschitz continuous function with values in $\ghe$ is $\g$-Lipschitz continuous. 

\begin{rem}
It is implicit in condition (G) that for all $\omega\in \Omega$ the functions
$t\mapsto G(t, \omega, \phi(t) )$ should represent an operator from
$L^2((0,T),\mu;H)$ to $E$. Note that the strong measurability in $H$
of $t\mapsto G\s(t, \omega, \phi(t) )x\s$ can be assumed without loss
of generality. Indeed, the weak measurability of this functions 
is clear from the assumptions and, as explained in \cite{vNsurvey},
there is no loss of generality in assuming that $H$ be separable; strong 
measurability then follows from the Pettis measurability theorem.
\end{rem}

\begin{rem}
 In the present context, where the driving process is a 
cylindrical Brownian motion,  it is not necessary to assume completeness of the filtration
and/or progressive measurability of $F$ and $G$ (cf. \cite[Proposition 2.10]{vNVW07}).  
\end{rem}

\begin{rem}
Taking $\phi = \one\otimes x$, we see that condition (G) implies that $t\mapsto G(t,\omega,x)$
defines an element in 
$\gamma (L^2((0,T),\mu;H),E)$ for all $\omega\in\Omega$ and $x\in E$.
\end{rem}

A {\it mild solution} of the problem \eqref{SDE2} is a continuous adapted
process
$X:[0,T]\times \Om \to E$ such that 
\begin{enumerate}
 \item for all $t \in [0,T]$, $s \mapsto S(t-s)F(s, X(s))$ is strongly measurable and belongs to
$L^1((0,t);E)$ almost surely;
 \item for all $t \in [0,T]$, $s \mapsto S(t-s)G(s,X(s))$ is strongly
measurable and stochastically integrable with respect to $W_H$;
 \item for all $t \in [0,T]$ we have, almost surely,
$$
 X(t) = S(t)\xi +  \bS\ast F(\cdot, X) + \bS\diamond G( \cdot, X).
$$
\end{enumerate}
Here, we used the notation
\[ \bS\ast f(t):= \int_0^t S(t-s)f(s)\, ds \]
and
\[ \bS\diamond \Phi(t):= \int_0^t S(t-s)\Phi(s)\,dW_H(s)\] 
for deterministic and stochastic convolutions, respectively.

We recall that for $0\le a<b\le T$ and $\F_a$-measurable sets $A\subseteq  \O$
the stochastic integral 
of the indicator process $(t,\omega)\mapsto \one_{(a,b]\times
A}(t,\omega)\,h\otimes x$ 
with respect to $W_H$ is defined as
\[\int_0^T \one_{(a,b]\times A}\otimes [h\otimes x]\,dW_H:= \one_A \,W_H(\one_{(a, b]}\otimes h) 
\otimes x.\]
This definition extends to finite linear combinations of
adapted indicator processes of the above form. For such processes 
$\Phi$  we have the following two-sided estimate.

\begin{prop}\cite[Theorem 5.9]{vNVW07}\label{p.NVW}
Let $E$ be a UMD Banach space and let $1<p<\infty$ be fixed.
Then
$$
 \E \Big\n \int_0^T \Phi\,dW_H\Big\n^p \eqsim \E\n
\Phi\n_{\g(L^2(0,T;H),E)}^p,
$$
with implied constants depending only on $p$ and $E$.
\end{prop}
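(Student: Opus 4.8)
The plan is to prove the two-sided estimate first for adapted step processes
$\Phi = \sum_{i=1}^N \one_{(a_i,b_i]\times A_i}\otimes (h_i\otimes x_i)$ with
$A_i\in\F_{a_i}$, for which both $\int_0^T\Phi\,dW_H$ and $\n\Phi\n_{\g(L^2(0,T;H),E)}$
are given by explicit finite expressions, and then to pass to general stochastically
integrable $\Phi$ by a density argument (such step processes are dense in the space of
stochastically integrable processes, and both $\Phi\mapsto\E\n\int_0^T\Phi\,dW_H\n^p$ and
$\Phi\mapsto\E\n\Phi\n_{\g(L^2(0,T;H),E)}^p$ behave well along the relevant approximations once
the estimate is known on a dense set). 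For step processes the argument splits into three
moves: a decoupling step, a conditioning step, and an application of the Kahane--Khintchine
inequalities.

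First I would decouple. Let $\wt W_H$ be an $H$-cylindrical Brownian motion independent of
$W_H$, realised on an enlarged probability space. The essential input is the \emph{decoupling
inequalities} characterising UMD spaces (Garling, McConnell): since $E$ is UMD,
\[
 \E\Big\n\int_0^T\Phi\,dW_H\Big\n^p \eqsim \E\Big\n\int_0^T\Phi\,d\wt W_H\Big\n^p ,
\]
with constants depending only on $p$ and $E$, where on the right the integrand $\Phi$ is kept
adapted to the filtration generated by $W_H$ but is now integrated against the independent
copy $\wt W_H$. For the step process above the decoupled integral is simply
$\sum_{i=1}^N \one_{A_i}\,\wt W_H(\one_{(a_i,b_i]}\otimes h_i)\otimes x_i$.

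Next I would condition on the $\sigma$-algebra $\F_\infty$ generated by $W_H$. Since $\Phi$ is
$\F_\infty$-measurable while $\wt W_H$ is independent of $\F_\infty$, the $E$-valued random
variable $\int_0^T\Phi\,d\wt W_H$ is, conditionally on $\F_\infty$, centred Gaussian:
re-expressing $\Phi(\om)$ as a finite sum $\sum_m f_m\otimes y_m$ with the $f_m$ orthonormal in
$L^2(0,T;H)$, the decoupled integral equals $\sum_m \wt W_H(f_m)\otimes y_m$, and by property
(iv) of a cylindrical Brownian motion the $\wt W_H(f_m)$ are independent standard Gaussians, so
that conditionally on $\F_\infty$ this has the law of $\sum_m \gamma_m \Phi(\om) f_m$. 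By the very
definition of the $\g$-radonifying norm this gives, $\P$-almost surely,
\[
 \E\Big( \Big\n\int_0^T\Phi\,d\wt W_H\Big\n^2 \,\Big|\, \F_\infty \Big) = \n\Phi\n_{\g(L^2(0,T;H),E)}^2 .
\]
Applying the Kahane--Khintchine inequalities conditionally on $\F_\infty$ --- for a centred
Gaussian vector $\zeta$ in a Banach space one has $(\E\n\zeta\n^p)^{1/p}\eqsim(\E\n\zeta\n^2)^{1/2}$
with constant depending only on $p$ --- yields $\P$-a.s.
\[
 \E\Big( \Big\n\int_0^T\Phi\,d\wt W_H\Big\n^p \,\Big|\, \F_\infty \Big) \eqsim \n\Phi\n_{\g(L^2(0,T;H),E)}^p ,
\]
and taking expectations over $\Om$ and chaining with the decoupling inequality gives
$\E\n\int_0^T\Phi\,dW_H\n^p \eqsim \E\n\Phi\n_{\g(L^2(0,T;H),E)}^p$ for step processes. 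The density
argument then extends this to all stochastically integrable $\Phi$.

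The main obstacle is the decoupling inequality itself: this is the deep ingredient, and it is
here (and only here) that the UMD property enters in an essential way --- indeed both one-sided
inequalities together are equivalent to $E$ being UMD. A secondary technical point is making
the conditional Gaussian identification fully rigorous for operator-valued integrands driven by
cylindrical noise (in particular verifying that the conditional covariance operator of
$\int_0^T\Phi\,d\wt W_H$ is $R_\Phi R_\Phi^*$, with $R_\Phi$ the operator represented by $\Phi$),
together with the bookkeeping needed to ensure that the passage from step processes to general
integrands is simultaneously compatible with both sides of the claimed equivalence.
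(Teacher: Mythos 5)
Your argument is correct and is essentially the proof of this result: the paper does not prove Proposition \ref{p.NVW} but quotes it from \cite{vNVW07}, where the two-sided estimate is established exactly along the lines you describe --- decoupling via the Garling/McConnell inequalities characterising the UMD property, identification of the decoupled integral as a conditionally Gaussian vector with covariance determined by $R_\Phi$, the Kahane--Khintchine inequalities applied conditionally, and a density/approximation argument to pass from adapted step processes to general integrands. So there is nothing to correct; the only caveat is the one you already flag yourself, namely that the conditional Gaussian identification and the compatibility of the limiting procedure with both sides of the equivalence require the careful bookkeeping carried out in \cite{vNVW07}.
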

By a density argument, this `It\^o isomorphism' extends to the Banach space 
$L_\FF^p(\O;\g(L^2(0,T;H),E))$ of all $\FF$-adapted processes in
$L^p(\O;\g(L^2(0,T;H),E))$.  

Existence and uniqueness of mild solutions in suitable Banach spaces
of continuous adapted $E$-valued processes
is proved by a fixed point argument. 
Following the approach of \cite{vNVW08},
for $0\leq a < b< \infty$, $1\le p<\infty$ and $0\le \alpha < \half$
we denote by $V_\a^p([a,b]\times \Omega;E)$ the Banach 
space of all continuous adapted processes $\phi
: [a,b]\times \Omega \to E$ for which
\[\begin{aligned}
\norm{\phi}_{\alpha,p}^p := & \expect \norm{\phi}_{C([a,b];E)}^p
 + \int_a^b\expect\norm{s\mapsto (t-s)^{-\a}\phi(s)}_{\gamma (L^2(a,t),E)}^p\,dt
 \end{aligned}
\] is finite, identifying processes which are indistinguishable.

We will need the following
lemma, which allows us to estimate $\norm{}_{\a,p}$-norms  in
terms of $\norm{}_{\a,p}$-norms on smaller intervals.
 
\begin{lem}\label{l.glue}
Let $0\leq a < b < c < d$ and $\phi : [a,d]\times \Omega \to E$ be an adapted process
with $\phi \in V_\a^p([a,c]\times \Omega;E)\cap V_\a^p([b,d]\times\Omega;E)$. Then
$\phi \in V_\a^p([a,d]\times \Omega;E)$ and
\[ \n \phi \n_{V_\a^p([a,d]\times \Omega;E)} \lesssim 
\n \phi\n_{V_\a^p([a,c]\times \Omega;E)}
 + \n \phi \n_{V_\a^p([b,d]\times \Omega;E)}.
\]
\end{lem}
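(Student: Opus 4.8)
The plan is to reduce the estimate on $[a,d]$ to the two given estimates by splitting the $C$-norm and the integral defining $\n\cdot\n_{\a,p}$ at the overlap interval $[b,c]$, using that this interval is nonempty so that each of the two smaller intervals $[a,c]$ and $[b,d]$ sees ``enough'' of the process. First I would handle the sup-norm term: since $[a,d]=[a,c]\cup[b,d]$, we have $\n\phi\n_{C([a,d];E)}\le \n\phi\n_{C([a,c];E)}+\n\phi\n_{C([b,d];E)}$ pointwise on $\Omega$, hence the same for the $L^p(\Omega)$-norms, which already accounts for the first summand in $\n\cdot\n_{\a,p}^p$.

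Next I would treat the integral term $\int_a^d \E\n s\mapsto (t-s)^{-\a}\phi(s)\n_{\gamma(L^2(a,t),E)}^p\,dt$. Split the outer integral as $\int_a^c + \int_c^d$. On $[a,c]$ the integrand is exactly the one occurring in $\n\phi\n_{V_\a^p([a,c]\times\Omega;E)}^p$, so that part is controlled immediately. For $t\in[c,d]$ the issue is that the Gaussian norm is taken over $L^2(a,t)$ rather than $L^2(b,t)$; I would split the interval of integration $(a,t)=(a,b)\cup(b,t)$ and use that the $\gamma$-norm of a function on $(a,t)$ is dominated by the sum (or, via the ideal/orthogonal-decomposition property, controlled by) the $\gamma$-norms of its restrictions to $(a,b)$ and $(b,t)$. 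Concretely, writing $R$ for the operator represented by $s\mapsto (t-s)^{-\a}\phi(s)$ on $(a,t)$ and $R_1,R_2$ for its restrictions to $(a,b)$, $(b,t)$, one has $\n R\n_{\gamma(L^2(a,t),E)}\le \n R_1\n_{\gamma(L^2(a,b),E)}+\n R_2\n_{\gamma(L^2(b,t),E)}$. The term with $R_2$ is exactly what appears in $\n\phi\n_{V_\a^p([b,d]\times\Omega;E)}^p$ after integrating $t$ over $[c,d]\subseteq[b,d]$.

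The remaining, and I expect main, obstacle is the term with $R_1$: for $t\in[c,d]$ we must bound $\int_c^d \E\n s\mapsto (t-s)^{-\a}\one_{(a,b)}(s)\phi(s)\n_{\gamma(L^2(a,b),E)}^p\,dt$ in terms of data on $[a,c]$. The point is that for $s\in(a,b)$ and $t\in[c,d]$ the weight $(t-s)^{-\a}$ is bounded: $t-s\ge c-b>0$, so $(t-s)^{-\a}\le (c-b)^{-\a}$, and the weight is also comparable (up to the fixed constant $((c-b)/ (c-a))^{\a}$ or similar) to $(c-s)^{-\a}$, which is the weight in $\n\phi\n_{V_\a^p([a,c]\times\Omega;E)}$ at the right endpoint $t=c$. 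Hence by the ideal property (multiplying by the bounded scalar function $(t-s)^{-\a}(c-s)^{\a}$ on $(a,b)$, whose sup is a constant depending only on $a,b,c$) one gets $\n R_1\n_{\gamma(L^2(a,b),E)}\lesssim \n s\mapsto (c-s)^{-\a}\phi(s)\n_{\gamma(L^2(a,b),E)}\le \n s\mapsto (c-s)^{-\a}\phi(s)\n_{\gamma(L^2(a,c),E)}$, with an implied constant independent of $t$. This last quantity is precisely the integrand in $\n\phi\n_{V_\a^p([a,c]\times\Omega;E)}^p$ evaluated at $t=c$; it need not be finite for a single $t$, but here I would instead keep $t$ free in the weight, note $(t-s)^{-\a}\le C(a,b,c)(\tau-s)^{-\a}$ uniformly for $s\in(a,b)$ and both $t\in[c,d]$ and $\tau$ ranging over a fixed subinterval of positive length inside $[c,d]$, and then average: $\int_c^d\E\n R_1\n^p\,dt\lesssim \int_c^d \E\n s\mapsto(\tau(t)-s)^{-\a}\one_{(a,b)}\phi(s)\n_{\gamma(L^2(a,c),E)}^p\,dt$ with a change of variables bounding this by $C\int_{a}^{c}\E\n s\mapsto(\tau-s)^{-\a}\phi(s)\n_{\gamma(L^2(a,\tau),E)}^p\,d\tau\le C\n\phi\n_{V_\a^p([a,c]\times\Omega;E)}^p$. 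Combining the four pieces — the sup-norm split, the $\int_a^c$ piece, the $R_2$ piece over $[c,d]$, and the $R_1$ piece over $[c,d]$ — yields the claimed inequality, and in particular finiteness of $\n\phi\n_{V_\a^p([a,d]\times\Omega;E)}$.
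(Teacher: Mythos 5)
Your proposal is correct and follows essentially the same route as the paper's proof: split the sup-norm, split the outer integral at $t=c$, split the inner interval at $b$ so that the $(b,t)$ piece is absorbed into the $V_\a^p([b,d]\times\Omega;E)$-norm, and control the $(a,b)$ piece -- where the weight is bounded since $t-s\ge c-b$ -- by re-inserting a comparable weight and averaging over the overlap $(b,c)$, which is exactly the paper's covariance-domination-plus-averaging step. The only slip is notational: the auxiliary parameter $\tau$ must range over a subinterval of $(b,c)$ (not of $[c,d]$), as your final displayed chain of inequalities in fact requires.
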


\begin{proof}
Clearly, $\phi$ belongs to $L^p(\Omega; C([a,d];E))$ with
\[ \n \phi\n_{L^p(\Omega; C([a,d];E))} \le 
\n \phi\n_{L^p(\Omega; C([a,c];E)}
 + \n \phi\n_{L^p(\Omega; C([b,d];E))}.
\]
Concerning the second part of the $V_{\alpha}^{p}$-norm, we 
have
\[ 
\begin{aligned}
& \int_a^b\expect\n s \mapsto (t-s)^{-\a}\phi (s)\n_{\g (L^2(a,t),E)}^p\, dt\\
&\leq \int_a^c \!\expect \n s \mapsto (t-s)^{-\a}\phi (s)\n_{\g (L^2(a,t),E)}^p dt
+ \int_c^d\! \expect\n s \mapsto (t-s)^{-\a}\phi (s)\n_{\g (L^2(a,t),E)}^p dt.
\end{aligned}
\]
Now
$$
\int_a^c \expect \n s \mapsto (t-s)^{-\a}\phi (s)\n_{\g (L^2(a,t),E)}^p\, dt
 \le \n\phi\n_{V_\a^p([a,c]\times \Omega;E)}^p 
$$
and
$$
\begin{aligned}
\ & \Big( \int_c^d \expect\n s \mapsto (t-s)^{-\a}\phi (s)\n_{\g (L^2(a,t),E)}^p\, 
dt\Big)^\frac1p
\\ & \qquad \le \Big( \int_c^d \expect \n s \mapsto (t-s)^{-\a}\phi (s)\n_{\g (L^2(a,b),E)}^p\, dt\Big)^\frac1p
\\ & \qquad\qquad +  \Big( \int_c^d \expect \n s \mapsto (t-s)^{-\a}\phi (s)\n_{\g (L^2(b,t),E)}^p\, dt\Big)^\frac1p
\\ &  \qquad \le  (c-b)^{-\a} (d-c)\expect \n \phi\n_{\g (L^2(a,b),E)} + \n\phi\n_{V_\a^p([b,d]\times \Omega;E)}.
 \end{aligned}
$$
The inequality of the first terms in the last step follows from the right ideal property
for spaces of $\g$-radonifying operators. 
Now observe that
\[
\begin{aligned}
\expect \n\phi &\n_{\g (L^2(a,b),E)}^p  = \frac{1}{c-b} \int_b^c \expect \n \phi \n_{\g(L^2(a,b),E)}^p\, dt
\leq  \frac{1}{c-b} \int_b^c \expect \n \phi \n_{\g(L^2(a,t),E)}^p\, dt\\
& \leq \frac{c^{\a p}}{c-b}\int_b^c \expect \n s\mapsto (t-s)^{-\a}\phi (s)\n_{\g (L^2(a,t),E)}^p\, dt 
 \leq \frac{c^{\a p}}{c-b}\n \phi\n_{V_\a^p([a,c]\times \Omega;E)}.
\end{aligned} 
\]
Here we have used covariance domination. Collecting the estimates, the claim follows.
\end{proof}

\begin{thm}[Existence and uniqueness, 
\hbox{\cite[Proposition 6.1]{vNVW08}}]\label{t.solution}
Let $E$ be a UMD space, and suppose that assumptions {\rm (A), (F)} and {\rm
(G)} are satisfied.
Fix $2<p<\infty$ and $\frac1p<\alpha<\half$ and let $\xi\in
L^p(\Om,\F_0;E)$ be given.
The mapping $$\cL_{\xi,T} : \phi \mapsto S(\cdot)\xi + \bS*F(\cdot, \phi)
+ \bS\diamond G(\cdot, \phi)$$
defines a Lipschitz continuous mapping on the space $\Vap$. 
Its Lipschitz constant is independent of $\xi$ and depends on $A, F, G$ only
through the constants $L_F$, $L_G$, and
the sectoriality constants of $A$, and tends towards $0$ as $T \downarrow 0$.
\end{thm}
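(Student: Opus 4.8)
The plan is to estimate the three terms in the definition of $\cL_{\xi,T}\phi$ separately, showing that the affine part $\phi\mapsto S(\cdot)\xi$ lands in $\Vap$ and that the two convolution operators $\phi\mapsto \bS*F(\cdot,\phi)$ and $\phi\mapsto\bS\diamond G(\cdot,\phi)$ are Lipschitz on $\Vap$ with a Lipschitz constant that can be made small by shrinking $T$. Since $\cL_{\xi,T}\phi_1 - \cL_{\xi,T}\phi_2 = \bS*(F(\cdot,\phi_1)-F(\cdot,\phi_2)) + \bS\diamond(G(\cdot,\phi_1)-G(\cdot,\phi_2))$, the $\xi$-dependent term drops out of the Lipschitz estimate, which is why the Lipschitz constant is independent of $\xi$; one only needs $S(\cdot)\xi\in\Vap$ to see that $\cL_{\xi,T}$ maps $\Vap$ into itself. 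For that, I would use analyticity of $\bS$ together with Lemma \ref{l.S}: the map $s\mapsto (t-s)^{-\a}S(s)\xi$ factors through the $\g$-bounded family $\{s^\a S(s)\}$ applied to the function $s\mapsto (t-s)^{-\a}s^{-\a}\xi\in L^2(0,t)$ (finite since $2\a<1$), and Proposition \ref{p.gbdd} (or Proposition \ref{l.tensor}) then bounds the $\g(L^2(0,t),E)$-norm; integrating in $t$ gives a bound of the form $C T^{1-2\a+\cdots}\n\xi\n_{L^p(\Om;E)}$, which is finite.

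For the $F$-term I would split $\n\bS*(F(\cdot,\phi_1)-F(\cdot,\phi_2))\n_{\a,p}$ into its sup-norm part and its $\g$-norm part. Writing $\psi_i = F(\cdot,\phi_i)$, the deterministic convolution $\bS*g(t) = \int_0^t S(t-s)g(s)\,ds$ is handled pathwise: $\n\bS*g\n_{C([0,T];E)}\le \big(\sup_{s\le T}\n S(s)\n\big) T \sup_{s\le T}\n g(s)\n$, and by the Lipschitz assumption (F), $\sup_s\n\psi_1(s)-\psi_2(s)\n\le L_F\n\phi_1-\phi_2\n_{C([0,T];E)}$, so the $C([0,T];E)$-part of the norm is controlled by $C T L_F\n\phi_1-\phi_2\n_{\a,p}$. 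For the $\g$-norm part, one must estimate $\n s\mapsto (t-s)^{-\a}\bS*g(s)\n_{\g(L^2(0,t),E)}$; here I would again invoke Lemma \ref{l.S} and the standard singular-integral-type bound for deterministic convolutions developed in \cite{vNVW08} (essentially: $(t-\cdot)^{-\a}(\bS*g)(\cdot)$ is, after interchanging integrals, a superposition of shifted and rescaled copies of $S$ applied to $g$, with an integrable kernel in the Minkowski/$\g$-Fubini sense), yielding a bound by $C T^{\b}\n g\n$ for some positive power $\b$ of $T$. Combined with the Lipschitz estimate for $\psi_1-\psi_2$ this gives the required smallness.

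The $G$-term is the main obstacle and requires the stochastic machinery. I would use the It\^o isomorphism of Proposition \ref{p.NVW} to replace $\E\n\bS\diamond(G(\cdot,\phi_1)-G(\cdot,\phi_2))(t)\n^p$ by $\E\n s\mapsto S(t-s)(G(s,\phi_1(s))-G(s,\phi_2(s)))\n_{\g(L^2(0,t;H),E)}^p$, and then factor $S(t-s) = (t-s)^{-\a}\cdot (t-s)^{\a}S(t-s)$ so that the $\g$-bounded multiplier family from Lemma \ref{l.S} (with the factor $(t-s)^{\a}S(t-s)$) can be pulled out via Proposition \ref{p.gbdd}, leaving the weighted operator $s\mapsto (t-s)^{-\a}(G(s,\phi_1(s))-G(s,\phi_2(s)))$ whose $\g$-norm is controlled by the $\g$-Lipschitz assumption (G) applied with $d\mu(s) = (t-s)^{-2\a}\,ds$ (a finite measure on $(0,t)$ since $2\a<1$): this bounds it by $L_G\n s\mapsto (t-s)^{-\a}(\phi_1(s)-\phi_2(s))\n_{L^2_\g((0,t),ds;E)}$, which is exactly (a localized piece of) the $V_\a^p$-norm of $\phi_1-\phi_2$. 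To get both the $C([0,T];E)$-part and the $\g$-part of the output norm from this, one applies the It\^o isomorphism together with the Burkholder--Davis--Gundy-type maximal inequality (as in \cite{vNVW08}) for the sup over $t$, and integrates the $\g$-part in $t$. The technical heart is bookkeeping the powers of $T$: each of the three bounds above carries a strictly positive power of $T$ (from the $\g$-bound $Ct^\a$ in Lemma \ref{l.S}, from the length of the integration interval, and from the measure $\mu$), so the total Lipschitz constant is $C(L_F+L_G)T^{\b_0}$ for some $\b_0>0$ depending on $\a,p$, which tends to $0$ as $T\downarrow 0$; crucially the constant $C$ depends on $A$ only through its sectoriality constants because Lemma \ref{l.S} does.
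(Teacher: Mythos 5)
The paper itself does not prove this theorem: it is quoted from \cite[Proposition 6.1]{vNVW08}, and your outline follows the same architecture as that source (the $\xi$-term drops out of the Lipschitz estimate; the deterministic convolution is treated by fractional-integration factorization; the stochastic convolution by the It\^o isomorphism, the $\gamma$-bounded family of Lemma \ref{l.S} via Proposition \ref{p.gbdd}, and assumption (G) applied with the measure $(t-s)^{-2\alpha}\,ds$, which is exactly the paper's $\mu_t^\alpha$). Two steps, however, fail as written. First, in your estimate for $S(\cdot)\xi$ you factor $(t-s)^{-\alpha}S(s)\xi=[s^{\alpha}S(s)]\,[(t-s)^{-\alpha}s^{-\alpha}\xi]$; Propositions \ref{l.tensor} and \ref{p.gbdd} then give $\norm{s\mapsto(t-s)^{-\alpha}S(s)\xi}_{\gamma(L^2(0,t),E)}\lesssim T^{\alpha}\,t^{\frac12-2\alpha}\norm{\xi}$. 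For $\alpha>\frac14$ this blows up as $t\downarrow 0$, and $\int_0^T t^{p(\frac12-2\alpha)}\,dt=\infty$ whenever $\alpha\ge\frac14+\frac1{2p}$, a range that is nonempty and allowed by the hypotheses $2<p<\infty$, $\frac1p<\alpha<\frac12$. So with your choice of exponent the $V_\alpha^p$-membership of $S(\cdot)\xi$ is not established. The repair is to decouple the two exponents: use the multiplier family $\{s^{\beta}S(s)\}$ with $0<\beta<\frac12-\alpha$, so that $\norm{(t-\cdot)^{-\alpha}(\cdot)^{-\beta}}_{L^2(0,t)}\eqsim t^{\frac12-\alpha-\beta}$ stays bounded on $[0,T]$; this is precisely the choice made in the paper's Lemma \ref{l.step1}.

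Second, the $C([0,T];E)$-part of the stochastic convolution cannot be obtained from ``the It\^o isomorphism together with a BDG-type maximal inequality'': $t\mapsto(\bS\diamond\Phi)(t)$ is not a martingale, since the integrand $S(t-s)\Phi(s)$ depends on $t$, so Doob/BDG do not apply, and no such maximal inequality is available off the shelf in a general UMD space. The tool actually used in \cite{vNVW08} (and invoked in this paper via \cite[Proposition 4.2]{vNVW08} in the proof of Lemma \ref{l.step2}) is the factorization method you already employed for the deterministic term: write $\bS\diamond\Phi$ as $I_{a}$ applied to the auxiliary process $s\mapsto\int_0^s(s-r)^{-a}S(s-r)\Phi(r)\,dW_H(r)$ with $\frac1p<a<\frac12$, estimate that process pointwise in $s$ by the It\^o isomorphism (this is where your weighted $\gamma$-norm and the $\gamma$-Lipschitz assumption enter), and use the smoothing of $I_a$ from $L^r(0,T;E)$ into $C^\lambda([0,T];E)$. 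A similar explicit estimate (or the embedding of Lemma \ref{l.embed}, available since UMD spaces have nontrivial type) is also needed for the weighted $\gamma$-part of the output norm, which you pass over with ``integrates the $\gamma$-part in $t$''. With these substitutions your bookkeeping of positive powers of $T$ goes through, and the Lipschitz constant indeed depends only on $L_F$, $L_G$ and the sectoriality constants and tends to $0$ as $T\downarrow0$.
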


For small $T_0>0$, the mapping $\cL_{\xi,T_0}$ has a unique fixed point in $\Vapo$,
and this fixed point turns out to be a mild solution of \eqref{SDE} on the interval $[0,T]$. 
Repeating this argument inductively in conjunction with  
Lemma \ref{l.glue}, one obtains a unique solution $X$ in $\Vap$ of 
\eqref{SDE} on the interval $[0,T]$ (see \cite[Theorem 6.2]{vNVW08}). 

We note that for $1 \leq q \leq p < \infty$ and $0\leq \a < \half$ we have a
continuous embedding
\[ \Vap \inject \Vaq .\]
Furthermore, for $1\leq p < \infty$ and $0\leq \a < \beta < \half$, 
the ideal property yields a continuous embedding
\[ V_\beta^p([0,T]\times\Omega;E) \inject \Vap .\]
These embeddings imply consistency of solutions for different values of $\a$ and $p$.\smallskip

The next lemma provides a way to test whether a given process belongs to $\Vap$.
By $L^p_{\mathbb{F}}(\Omega; C^{\lambda}([0,T];E))$ we denote the Banach space
of all continuous adapted processes $\phi : [0,T]\times \Omega \to E$ belonging to 
$L^p(\Omega; C^{\lambda}([0,T];E))$, once more identifying processes which are indistinguishable. 

\begin{lem}\label{l.embed}
Let $2<p<\infty$ and $\frac{1}{p}<\alpha<\frac12$, and let $E$ be a Banach space
with type
$\tau \in [1,2)$. Then for $\lambda > \frac{1}{\tau} - \half$ we have a 
continuous embedding
\[ L^p_{\mathbb{F}}(\Omega; C^{\lambda}([0,T];E))\inject \Vap. \] 
\end{lem}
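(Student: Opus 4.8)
The goal is to show that a process $\phi \in L^p_{\mathbb F}(\Omega;C^\lambda([0,T];E))$ lies in $\Vap$, i.e.\ that the second part of the $\Vert\cdot\Vert_{\alpha,p}$-norm, namely $\int_0^T \expect\Vert s\mapsto (t-s)^{-\alpha}\phi(s)\Vert_{\gamma(L^2(0,t),E)}^p\,dt$, is controlled by $\expect\Vert\phi\Vert_{C^\lambda([0,T];E)}^p$. The natural route is to fix $t\in(0,T]$ and $\omega$, and estimate $\Vert s\mapsto (t-s)^{-\alpha}\phi(s,\omega)\Vert_{\gamma(L^2(0,t),E)}$ by a suitable norm of the path $\phi(\cdot,\omega)$. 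Since a scalar function $g\in L^2(0,t)$ times a vector $x\in E$ represents a $\gamma$-radonifying operator with $\gamma$-norm $\Vert g\Vert_{L^2}\Vert x\Vert$ (Proposition~\ref{l.tensor}), and since by type $\tau$ the $\gamma$-norm of an $E$-valued function is dominated by a vector-valued $\ell^{\tau'}$-type square-function expression, the key inequality to invoke is the known embedding of a H\"older (or Besov) space into $\gamma(L^2(0,t),E)$ valid precisely when the H\"older exponent exceeds $\frac1\tau-\frac12$. The plan is to reduce to this deterministic, pathwise statement and then integrate over $\Omega$.

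\emph{Key steps, in order.} First I would split $\phi(s) = \phi(s)-\phi(t) + \phi(t)$ on the interval $s\in(0,t)$. The constant part $s\mapsto (t-s)^{-\alpha}\phi(t)$ is handled by Proposition~\ref{l.tensor}: its $\gamma(L^2(0,t),E)$-norm equals $\Vert (t-\cdot)^{-\alpha}\Vert_{L^2(0,t)}\,\Vert\phi(t)\Vert = C_{\alpha}\,t^{\frac12-\alpha}\,\Vert\phi(t)\Vert$, which is finite since $\alpha<\frac12$ and is bounded by $C\,T^{\frac12-\alpha}\Vert\phi\Vert_{C([0,T];E)}$. Second, for the remainder $\psi(s) := (t-s)^{-\alpha}(\phi(s)-\phi(t))$, which satisfies $\Vert\psi(s)\Vert \le (t-s)^{\lambda-\alpha}\Vert\phi\Vert_{C^\lambda}$, I would invoke a $\gamma$-boundedness/square-function criterion: write $\psi$ as a limit of step functions adapted to a dyadic partition of $(0,t)$ near the endpoint $t$, and use type $\tau$ to bound $\Vert\psi\Vert_{\gamma(L^2(0,t),E)}$ by a weighted $\ell^{\tau'}$-sum of the oscillations of $\phi$, each of order $2^{-j(\lambda-\alpha)}$ times a measure factor $2^{-j/2}$ from the length of the $j$-th dyadic block; summability of $\sum_j 2^{-j(\lambda-\alpha+\frac12-\frac1{\tau'})\cdot(\dots)}$ — equivalently $\lambda > \frac1\tau-\frac12$ — gives a bound $C\,t^{\lambda-\alpha+\frac1\tau-\frac12}\Vert\phi\Vert_{C^\lambda}$ uniform in $t\in(0,T]$. (Alternatively one cites the Besov-space embedding $B^{\lambda}_{\tau,\tau}\inject\gamma$, or Proposition~\ref{p.gammaest} applied to a smoothed version, but the dyadic estimate is self-contained.) Third, combining the two parts yields, for each $\omega$ and each $t\in(0,T]$,
\[
\Vert s\mapsto (t-s)^{-\alpha}\phi(s,\omega)\Vert_{\gamma(L^2(0,t),E)} \le C\,\Vert\phi(\cdot,\omega)\Vert_{C^\lambda([0,T];E)},
\]
with $C$ depending only on $\alpha,\lambda,\tau,T$ and the type-$\tau$ constant of $E$. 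Fourth, raising to the $p$-th power, integrating $dt$ over $[0,T]$ and $d\P$ over $\Omega$, and adding the trivial bound $\expect\Vert\phi\Vert_{C([0,T];E)}^p \le \expect\Vert\phi\Vert_{C^\lambda([0,T];E)}^p$ for the first part of the norm, gives $\Vert\phi\Vert_{\alpha,p}^p \lesssim \expect\Vert\phi\Vert_{C^\lambda([0,T];E)}^p$. Adaptedness and path-continuity of $\phi$ pass to $\Vap$ unchanged, so $\phi\in\Vap$ and the embedding is continuous.

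\emph{Main obstacle.} The only nontrivial point is the pathwise $\gamma$-norm estimate in the second step: bounding $\Vert s\mapsto (t-s)^{-\alpha}(\phi(s)-\phi(t))\Vert_{\gamma(L^2(0,t),E)}$ for a merely H\"older-continuous $E$-valued path, uniformly in $t$, and seeing exactly where the condition $\lambda>\frac1\tau-\frac12$ enters. This is where the type-$\tau$ hypothesis is genuinely used — for $\tau=2$ one recovers the classical $\lambda>0$ (any positive H\"older exponent), while for $\tau<2$ one pays the deficit $\frac1\tau-\frac12$. Everything else (the constant part via Proposition~\ref{l.tensor}, the passage from pathwise bound to $L^p(\Omega)$ bound, the verification of adaptedness and continuity) is routine. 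I would also take care that the dyadic decomposition is taken toward the singular endpoint $s=t$, so that the weight $(t-s)^{-\alpha}$ is comparable to a constant on each dyadic block and can be pulled out blockwise.
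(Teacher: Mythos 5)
Your overall strategy coincides with the paper's: establish the pathwise, deterministic bound $\norm{s\mapsto(t-s)^{-\alpha}\phi(s,\omega)}_{\g(L^2(0,t),E)}\lesssim\norm{\phi(\cdot,\omega)}_{C^\lambda([0,T];E)}$ uniformly in $t$, then integrate in $t$ and $\omega$; your first, third and fourth steps are routine and correct. The gap is in your second step, which is the entire content of the lemma. Writing $\psi(s)=(t-s)^{-\alpha}(\phi(s)-\phi(t))$ as a limit of step functions adapted to a single dyadic partition toward $s=t$, with one vector per block controlled by the blockwise sup, does not bound the $\g$-norm when $\tau<2$: a sup-norm bound on a block does not control $\norm{\psi\one_{I_j}}_{\g(L^2(I_j),E)}$ (in general $L^\infty(I;E)$ does not embed into $\g(L^2(I),E)$ unless $E$ has type $2$), and the approximation error between $\psi$ and its blockwise-constant approximations is exactly where the difficulty sits. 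The bookkeeping betrays this: the (Gaussian) type-$\tau$ estimate for a genuine step function gives an $\ell^{\tau}$-sum (not $\ell^{\tau'}$), namely $\big(\sum_j |I_j|^{\tau/2}\norm{x_j}^{\tau}\big)^{1/\tau}\lesssim\big(\sum_j (2^{-j}t)^{\tau(\half+\lambda-\alpha)}\big)^{1/\tau}$ with $|I_j|\sim 2^{-j}t$ and $\norm{x_j}\lesssim(2^{-j}t)^{\lambda-\alpha}$, which converges as soon as $\lambda>\alpha-\half$ — automatic here — and never produces the condition $\lambda>\frac1\tau-\half$. If that reduction were legitimate, the lemma would hold for every $\lambda\ge0$ in every Banach space, which is false.

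The condition $\lambda>\frac1\tau-\half$ genuinely enters only when you control the oscillation of $\phi$ at \emph{all finer} dyadic scales inside each block, i.e.\ via a two-parameter decomposition: telescope $\psi$ into differences of piecewise-constant approximations at scales $2^{-k}$, apply the type-$\tau$ step-function estimate to each layer (layer $k$ contributes of order $2^{k(\frac1\tau-\half-\lambda)}$ after the $\ell^\tau$-sum over its $2^{k}$ subintervals), pull the weight out blockwise by the right ideal property, and sum the geometric series — convergent precisely when $\lambda>\frac1\tau-\half$. This multiscale estimate is exactly the weighted Besov--$\gamma$ embedding the paper invokes instead of reproving it: choosing $q$ with $\alpha<\half-\frac1q$, it cites \cite[Lemma 3.3]{vNVW08} together with the embedding $C^\lambda([0,T];E)\inject B^{\frac1\tau-\half}_{q,\tau}(0,T;E)$ to get $\sup_{t\in(0,T)}\norm{s\mapsto(t-s)^{-\alpha}\phi(s)}_{\g(L^2(0,t),E)}\lesssim\norm{\phi}_{C^\lambda([0,T];E)}$, and then integrates. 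So either cite that lemma, or replace your single-scale blockwise reduction by the full multiscale argument; as written, the central estimate does not follow.
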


\begin{proof}
Pick $q>0$ such that $\alpha < \half - \frac{1}{q}$. By \cite[Lemma 3.3]{vNVW08}
and the fact that 
$C^\lambda ([0,T];E)$ embeds into the Besov space $B^{\frac{1}{\tau} -
\half}_{q,\tau}(0,T;E)$, 
for all $f\in C^{\lambda}([0,T];E)$ we have
\[ \sup_{t\in (0,T)} \norm{s\mapsto (t-s)^{-\a}f(s)}_{\gamma
(L^2(0,t),E)}^p\lesssim 
\norm{f}_{B^{\frac{1}{\tau}-\half}_{q,\tau}(0,T;E)}\lesssim 
 \norm{f}_{C^{\lambda}([0,T];E)}.
\]
Thus, considering adapted $\phi \in L^p(\Omega;
C^{\lambda}([0,T];E))$ we see that
\[
 \int_0^T \expect\norm{s\mapsto (t-s)^{-\a}\phi(s)}_{\gamma (L^2(0,t),E)}^p\, dt
  \lesssim T\norm{\phi}_{L^p(\Omega;C^\lambda([0,T];E))}^p.
\]
The other part of the norm of $\Vap$ can clearly be estimated by 
the norm of $L^p(\Omega; C^{\lambda}([0,T];E)$. 
\end{proof}

\section{Continuous dependence on the coefficients}\label{sect.dependence}

We now take up our main line of study and approximate simultaneously the coefficients $A$, $F$, $G$ and
the initial datum $\xi$ in equation \eqref{SDE}.
Regarding the approximation of $A$ we make the following assumptions:

\begin{enumerate}
\item[{\rm (A1)}] The operators $A$ and $A_n$ are densely defined, closed, and
{\em uniformly  sectorial} on 
$E$ in the sense there exist numbers $M\ge 1$ and $w\in\R$ such that $A$ and
each $A_n$ is sectorial of type 
$(M,w)$.
\end{enumerate}
\begin{enumerate}
\item[{\rm (A2)}]  The operators $A_n$ converge to $A$ {\em in the strong
resolvent sense}:
$$\limn R(\la,A_n)x = R(\la,A)x$$ for all $\Re\la>w$ and $x\in E$.
\end{enumerate}

Under (A1), the operators $A$ and $A_n$ generate strongly continuous analytic
semigroups
$\bS$, $\bS_n$ satisfying the uniform bounds
$$ \begin{aligned} 
\n S(t)\n, \ \n S_n(t)\n & \le Me^{wt}, && t\ge 0,\\ 
\n AS(t)\n, \ \n A_nS_n(t)\n & \le \frac{M'}{t}e^{wt}, && t>0.
\end{aligned}
$$
The following Trotter-Kato type approximation theorem is well known; see 
\cite[Theorem 3.6.1]{abhn} for
part (1), part (2) follows from a contour integral argument.

\begin{lem}\label{l.trotterkato}
Assume {\rm (A1)} and {\rm (A2)}.
\begin{enumerate}
 \item For all $t\in [0,\infty)$ and $x \in E$ we have $S_n(t)x \to S(t)x$,  and
the convergence is uniform on compact subsets of $[0,\infty )\times E$.
 \item For all $t\in (0,\infty)$ and $x \in E$ we have
$A_n S_n(t)x \to A S(t)x$, and the convergence is uniform on compact subsets of
$(0,\infty )\times E$.
\end{enumerate}
\end{lem}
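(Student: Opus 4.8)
For part (1) the plan is to quote the classical Trotter--Kato theorem and then post-process it. Assumption (A1) provides the stability bound $\n S_n(t)\n\le Me^{wt}$ and (A2) the strong resolvent convergence, so \cite[Theorem 3.6.1]{abhn} yields $S_n(\cdot)x\to S(\cdot)x$ in $C([0,T];E)$ for every $x\in E$ and $T>0$. To upgrade this to uniformity on an arbitrary compact set $K\subseteq[0,\infty)\times E$ one writes $K\subseteq[0,T]\times L$ with $L\subseteq E$ compact, covers $L$ by finitely many balls $B(x_j,\delta)$, and for $(t,x)\in K$ uses the three-term estimate
\[
\n S_n(t)x-S(t)x\n\le 2Me^{wT}\delta+\max_j\,\sup_{t\in[0,T]}\n S_n(t)x_j-S(t)x_j\n ,
\]
choosing $\delta$ small and then $n$ large. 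The same three-term device will be reused to handle the $x$-variable in part (2), so the genuine work is in part (2).

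For part (2) I would use a contour integral argument. The first step is to turn uniform sectoriality in the half-plane sense into a common sector: by the standard Taylor-expansion argument, the bound $\n(\lambda-w)R(\lambda,A_n)\n\le M$ on $\{\Re\lambda>w\}$ forces each $R(\cdot,A_n)$, as well as $R(\cdot,A)$, to extend analytically to a fixed sector $w+\Sigma_\theta$ with $\theta>\pi/2$, with a common resolvent bound $\widetilde M$ there, $\theta$ and $\widetilde M$ depending only on $M$ and $w$. Fixing $\pi/2<\theta'<\theta$ and letting $\Gamma$ be the usual (suitably oriented) boundary contour of $w+\Sigma_{\theta'}$, consisting of two rays and a small arc around the vertex, one has for every $t>0$
\[
A_nS_n(t)-AS(t)=\frac{1}{2\pi i}\int_\Gamma \lambda e^{\lambda t}\bigl(R(\lambda,A_n)-R(\lambda,A)\bigr)\,d\lambda .
\]
Since $\Re\lambda\to-\infty$ along the rays of $\Gamma$, for any $0<t_0<T$ the integrand is dominated, uniformly in $n$ and in $t\in[t_0,T]$, by $|\lambda|\,\psi(\lambda)\,\tfrac{2\widetilde M}{|\lambda-w|}\n x\n$, where $\psi(\lambda)=e^{(\Re\lambda)t_0}$ on the rays and $\psi$ is bounded on the arc; this majorant is integrable along $\Gamma$.

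It then remains to show that $(R(\lambda,A_n)-R(\lambda,A))x\to0$ for each $\lambda\in\Gamma$, uniformly on compact subarcs. A priori (A2) gives this only on $\{\Re\lambda>w\}$, and the point is to propagate it: expanding $R(\lambda,A_n)$ in a Neumann series around a base point $\lambda_0$ with $\Re\lambda_0>w$ and using the uniform bound $\n(\lambda-w)R(\lambda,A_n)\n\le\widetilde M$, one can iterate along a chain of overlapping discs and conclude that $R(\cdot,A_n)x\to R(\cdot,A)x$ throughout $w+\Sigma_\theta$, uniformly on compact subsets. Granting this, fix a compact $K\subseteq(0,\infty)\times E$, say $K\subseteq[t_0,T]\times L$, and split $\Gamma$ into the part $\{|\lambda|>\rho\}$ --- whose contribution falls below any prescribed $\varepsilon$ uniformly in $t\in[t_0,T]$, $n$ and $x\in L$ once $\rho$ is large, by integrability of the majorant --- and the compact arc $\{|\lambda|\le\rho\}$, on which the uniform convergence of resolvents makes the integral small for $n$ large, uniformly in $t\in[t_0,T]$; the $x$-net argument from part (1), now with the uniform bound $\n A_nS_n(t)\n\le M't_0^{-1}e^{wT}$ on $[t_0,T]$, then absorbs the $x$-variable, giving convergence uniform on $K$. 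The only genuinely non-routine step is this propagation of strong resolvent convergence from the half-plane to the full sector traversed by $\Gamma$; everything else is the stability bound combined with dominated convergence against a majorant that is uniform for $t$ bounded away from $0$.
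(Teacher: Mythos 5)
Your proposal is correct and follows essentially the same route as the paper, which simply cites the Trotter--Kato theorem \cite[Theorem 3.6.1]{abhn} for part (1) and remarks that part (2) follows from a contour integral argument; your write-up just supplies the details (the compactness/net upgrade in the $x$-variable, the extension of the uniform resolvent bound and of the strong resolvent convergence to a common sector, and the dominated splitting of the contour) that the paper leaves implicit. The only cosmetic point is that on the rays near the vertex, where $0<\Re\lambda\le w$, your majorant $e^{(\Re\lambda)t_0}$ should carry an extra harmless factor such as $e^{(w\vee 0)(T-t_0)}$ to dominate $e^{(\Re\lambda)t}$ for all $t\in[t_0,T]$.
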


As a consequence we see that under {\rm (A1)} and {\rm (A2)}, for each $\b\in
[0,1)$ the functions 
$M_n^\b(t) := t^\beta S_n(t)$ and $M^\b(t) := t^\beta S(t)$ satisfy the
hypotheses of Lemma \ref{l.gammaconv}. Indeed,
condition \ref{l.gammaconv}(1) is clear, condition \ref{l.gammaconv}(2) follows from Lemma 
\ref{l.trotterkato}, and condition \ref{l.gammaconv}(3) 
follows from Lemma \ref{l.S} according to which 
the sets $$\{s^\b S_n(s): \ s\in (0,t)\} $$
are $\g$-bounded in $\calL(E)$ with a uniform $\g$-bound of order
$t^\b$.

We will make the following assumptions on the nonlinearities $F$ and $F_n$:
\begin{enumerate}
\item[(F1)] The maps $F$, $F_n : [0,T]\times \Omega \times E \to E$ are
uniformly Lipschitz continuous and of 
linear growth in the sense that they satisfy (F) with uniform Lipschitz and
growth constants. Furthermore, $F$ and $F_n$ satisfy the measurability assumption in (F).
\item[(F2)] For almost all $(t, \omega) \in [0,T]\times \Omega$ we have $F_n(t,
\omega, x) \to F(t,\omega, x)$ in $E$
for all $x \in E$.
\end{enumerate}
Similar assumptions are made on $G$ and $G_n$:
\begin{enumerate}
\item[(G1)] The maps $G$, $G_n : [0,T]\times \Omega \times E  \to \gamma (H, E)$
are uniformly 
$\gamma$-Lipschitz continuous and of linear growth in the sense that they
satisfy (G) 
with uniform $\g$-Lipschitz and growth constants. Furthermore, $G$ and $G_n$
satisfies the measurability assumption in (G).
\item[(G2)] For almost all $(t,\omega) \in [0,T]\times \Omega$ we have\, $G_n(\cdot,
\omega, x) \to 
G(\cdot, \omega, x)$ in $\g(L^2(0,T, \mu;H),E)$, for all $x \in E$ and all finite
measures $\mu$ on $[0,T]$.
\end{enumerate}

We will need a lemma on convergence of random variables with values in spaces of
H\"older continuous functions.
For $\eta\in L^p(\Omega; C^\lambda ([0,T];E))$
we denote by $\eta_t\in L^p(\Omega)$ the random variable $(\eta_t)(\omega):=
(\eta(\omega))(t).$

\begin{lem}\label{l.hoelderconvergence}
Let $E$ be a Banach space, let $1<p<\infty$ and $\lambda >0$, and suppose that
$\eta_n, \eta \in 
L^p(\Omega; C^\lambda ([0,T];E))$ satisfy
\begin{enumerate}
 \item $\sup_n \n\eta_n\n_{L^p(\Omega; C^\lambda ([0,T];E))}<\infty$;
 \item $(\eta_n)_t \to \eta_t$ in measure for all $t \in [0,T]$;
\end{enumerate}
Then $\eta_n \to \eta$ in $L^q(\Omega; C^\mu ([0,T];E))$ for all $1 \leq q < p$
and
$0\le \mu<\lambda$.
\end{lem}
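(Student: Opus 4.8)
The plan is to first upgrade the pointwise-in-measure convergence of the scalar random variables $(\eta_n)_t$ to convergence in measure of the $C^\mu([0,T];E)$-valued random variables $\eta_n$, and then use uniform $L^p$-integrability (coming from hypothesis (1)) to convert convergence in measure into $L^q$-convergence for $q<p$. For the first step I would argue as follows. Fix a countable dense set $D\subseteq[0,T]$. By hypothesis (2) together with a diagonal argument, after passing to a subsequence we have $(\eta_n)_t\to\eta_t$ almost surely, simultaneously for all $t\in D$. On the event where this holds and where moreover $\sup_n\n\eta_n\n_{C^\lambda([0,T];E)}<\infty$ (the latter being an a.s.\ event along a further subsequence, since the $C^\lambda$-norms are bounded in $L^p(\Omega)$, hence a.s.\ finite after passing to a subsequence along which the $L^p$-bounded sequence of norms is a.s.\ bounded — e.g.\ by Fatou/Chebyshev), the sequence $(\eta_n)$ is an equi-H\"older family that converges on the dense set $D$; by the Arzel\`a--Ascoli argument it then converges in $C([0,T];E)$, and in fact in $C^\mu([0,T];E)$ for every $\mu<\lambda$, by the standard interpolation inequality
\[
\n f\n_{C^\mu}\lesssim \n f\n_{C}^{1-\mu/\lambda}\,\n f\n_{C^\lambda}^{\mu/\lambda},
\]
applied to $f=\eta_n-\eta$ (whose $C^\lambda$-norm stays bounded and whose $C$-norm tends to $0$).

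This shows that \emph{every} subsequence of $(\eta_n)$ has a further subsequence converging to $\eta$ in $C^\mu([0,T];E)$ almost surely, and in particular in measure; by the usual subsequence principle, the full sequence $\eta_n\to\eta$ in measure in $C^\mu([0,T];E)$. For the second step, fix $1\le q<p$. The sequence $(\n\eta_n-\eta\n_{C^\mu([0,T];E)}^q)_n$ is bounded in $L^{p/q}(\Omega)$ with $p/q>1$ (using hypothesis (1), the interpolation inequality above to control the $C^\mu$-norm by the $C^\lambda$-norm, and $\eta\in L^p(\Omega;C^\lambda)$, which follows from (1) and Fatou), hence uniformly integrable; combined with convergence in measure to $0$, Vitali's convergence theorem gives $\expect\n\eta_n-\eta\n_{C^\mu([0,T];E)}^q\to 0$, which is the claim.

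The main obstacle is the passage from the merely pointwise (on $D$) and in-measure control to genuine a.s.\ convergence in the H\"older norm: one must be careful that the "bad" null sets from the countably many points of $D$ and from the $C^\lambda$-norm bound can be combined into a single null set, which forces the subsequence extraction, and then one must run the whole argument through the subsequence principle to recover a statement about the original sequence. Everything after that — the Arzel\`a--Ascoli/interpolation step and the Vitali step — is routine. One small point worth checking explicitly is that $\mu<\lambda$ is genuinely needed for the interpolation inequality (at $\mu=\lambda$ equi-H\"older families need not converge in $C^\lambda$), which is exactly why the statement only claims $0\le\mu<\lambda$.
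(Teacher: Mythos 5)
Your overall scheme --- first upgrade hypothesis (2) to convergence in measure in $C^\mu([0,T];E)$, then combine this with uniform integrability of $\n\eta_n-\eta\n_{C^\mu}^q$ (coming from hypothesis (1) and $q<p$) and Vitali's theorem --- is exactly the scheme of the paper's proof, and your final Vitali/uniform integrability step is correct. The gap is in your first step. You claim that, because the norms $\n\eta_n\n_{C^\lambda([0,T];E)}$ are bounded in $L^p(\O)$, one may pass to a subsequence along which $\sup_n\n\eta_n\n_{C^\lambda([0,T];E)}<\infty$ almost surely. This is false in general: an $L^p$-bounded sequence of nonnegative random variables need not admit \emph{any} subsequence with almost surely finite supremum. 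For example, let $(A_n)$ be i.i.d., nonnegative, with finite $p$-th moment and unbounded support; by the second Borel--Cantelli lemma, $\sup_k A_{n_k}=\infty$ almost surely along every subsequence. Taking $E=\R$, $\eta=0$ and $\eta_n=A_n\psi_n$, where $\psi_n\in C^\lambda([0,T])$ has uniform norm $\delta_n\to0$ and $C^\lambda$-seminorm $1$, produces a sequence satisfying both hypotheses of the lemma (and its conclusion) for which the event $\{\sup_k\n\eta_{n_k}\n_{C^\lambda([0,T];E)}<\infty\}$ is a null set for every subsequence. Neither Fatou (which controls only a liminf) nor Chebyshev (which gives tail bounds uniformly in $n$, i.e.\ tightness, not an almost sure uniform bound) yields your claim; consequently the Arzel\`a--Ascoli/interpolation argument, which you run on that event, does not establish almost sure convergence in $C^\mu$ along sub-subsequences, and the subsequence principle cannot be invoked as stated.

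The repair is to make the argument quantitative for each fixed $n$, which is what the paper does by combining the Chebyshev bound with the argument of \cite[Proposition 2.1]{mss94}. Writing $\zeta_n=\eta_n-\eta$ and $M:=\sup_n\E\n\zeta_n\n_{C^\lambda([0,T];E)}^p$, fix $\eps,R,\delta>0$ and a $\delta$-net $t_1,\dots,t_N$ of $[0,T]$. On the event $\{\n\zeta_n\n_{C^\lambda}\le R\}$ one has $\n\zeta_n\n_{C}\le\max_i\n\zeta_n(t_i)\n+R\delta^\lambda$, and your interpolation inequality then gives $\n\zeta_n\n_{C^\mu}\lesssim\big(\max_i\n\zeta_n(t_i)\n\big)^{1-\mu/\lambda}R^{\mu/\lambda}+R\delta^{\lambda-\mu}$. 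Hence
\[
\P\big(\n\zeta_n\n_{C^\mu([0,T];E)}>\eps\big)\le \frac{M}{R^p}+\P\Big(\max_{1\le i\le N}\n\zeta_n(t_i)\n>c(\eps,R,\delta)\Big),
\]
for a suitable constant $c(\eps,R,\delta)>0$, provided $R$ is chosen large and then $\delta$ so small that $R\delta^{\lambda-\mu}$ is small (here $\mu<\lambda$ enters). The last probability tends to $0$ as $n\to\infty$ by hypothesis (2) applied at the finitely many points $t_i$. This yields $\zeta_n\to0$ in measure in $C^\mu([0,T];E)$ without any almost sure uniform bound, after which your uniform integrability/Vitali step concludes exactly as in the paper.
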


\begin{proof}
We fix $0< \mu < \lambda$ and put $\zeta_n := \eta_n - \eta$.

Let $M:= \sup_n\norm{\zeta_n}^p_{L^p(\Omega; C^\lambda ([0,T];E))}$. 
Chebyshev's inequality implies that
\[ \sup_{n} \P\big( \norm{\zeta_n}_{C^\lambda ([0,T];E)} \geq R\big) \leq
R^{-p}M \to 0
 \quad\mbox{as}\,\, R \to \infty.
\]

It follows from this and assumption (2) that $\zeta_n \to 0$ in measure in
$C^\mu ([0,T];E)$.
The proof is the same as that of \cite[Proposition 2.1]{mss94} where the finite
dimensional situation
was considered. See also \cite[Lemma A.1]{bmss95} for further convergence
results of this form.

As a consequence of the boundedness of $(\zeta_n)$ in
$L^p(\Omega;C^\lambda([0,T];E))$ the random variables 
$\norm{\zeta_n}_{C^\lambda ([0,T];E)}^q$ -- and hence also the random variables
$\norm{\zeta_n}_{C^\mu ([0,T];E)}^q$ -- 
are uniformly integrable for all $1\leq q < p$. 
It follows from \cite[Proposition 4.12]{kallenberg} that $\zeta_n \to 0$ in
$L^q(\Omega; C^\mu ([0,T];E))$.
\end{proof}

We are now in a position to state and prove the main abstract result of this
paper. 
In its formulation we use that UMD Banach
spaces have nontrivial type. In fact, UMD Banach spaces are super-reflexive,
super-reflexive spaces are 
$K$-convex, and $K$-convexity is equivalent to having nontrivial type. For more
details and references 
to the literature we refer to \cite{handbook1, handbook2}.

In what follows we will consider $0\le \a<\frac12$ and $2< p<\infty$ to be fixed and write $$X = \sol
(A,F,G,\xi )$$ 
to indicate that $X$ is the unique mild solution
of \eqref{SDE} in $\Vap$ with coefficients $(A,F,G)$ and initial datum $\xi$.
 
\begin{thm}\label{t.coeffdep}
Let $E$ be a UMD Banach space, let $\tau\in (1,2]$ be its type,
and suppose that $\frac1p<1-\frac1\tau$. Suppose further that
the operators $A$ and $A_n$ 
satisfy {\rm (A1)} and {\rm (A2)}, the nonlinearities $F$ and $F_n$ satisfy {\rm
(F1)} 
and {\rm (F2)}, the nonlinearities $G$ and $G_n$ satisfy {\rm (G1)} and {\rm
(G2)}, and the initial data $\xi$ and $\xi_n$ satisfy  $\xi_n \to \xi$ in 
$L^p(\Omega,\F_0;E)$. Then, whenever $\frac1\tau-\frac12+\frac1p<\a<\frac12$, the 
mild solutions
$$X := \sol (A,F,G,\xi), \ \ X_n :=\sol (A_n,F_n,G_n,\xi_n)$$ satisfy
\[ X_n \to X \  \hbox{ in } \ \Vaq .\] 
for all $1\le q<p$. In particular, $X_n\to X$ in $L^q(\O;C([0,T];E))$ for all $1\le q<p$.
\end{thm}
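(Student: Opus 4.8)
The plan is to run a fixed-point comparison argument on a short time interval and then bootstrap to $[0,T]$ by means of the gluing Lemma~\ref{l.glue}. Write $\cL^{(n)}$ for the solution map of Theorem~\ref{t.solution} attached to the data $(A_n,F_n,G_n,\xi_n)$ and $\cL$ for the one attached to $(A,F,G,\xi)$. First I would record the uniform bound $\sup_n\|X_n\|_{\alpha,p}<\infty$: this comes from the linear-growth half of the estimates underlying Theorem~\ref{t.solution}, using the uniform sectoriality in (A1), the uniform growth constants in (F1) and (G1), and the boundedness of $(\xi_n)$ in $L^p(\Omega,\F_0;E)$; in particular $\sup_n\|X_n-X\|_{\alpha,p}<\infty$. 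Next, by Theorem~\ref{t.solution} and the uniformity in (A1), (F1), (G1), one can pick $T_0\in(0,T]$ and $L<1$, both independent of $n$, so that each of $\cL^{(n)}_{T_0}$ and $\cL_{T_0}$ is an $L$-contraction on $\Vapo$. Since $X_n$ and $X$ are the respective fixed points on $[0,T_0]$,
\[
\|X_n-X\|_{\Vapo}\le\frac{1}{1-L}\,\bigl\|\cL^{(n)}_{T_0}(X)-\cL_{T_0}(X)\bigr\|_{\Vapo},
\]
so the task reduces to showing the right-hand side tends to $0$ — and here \emph{only the coefficients vary}, the process $X$ being held fixed. Once convergence on $[0,T_0]$ is in hand, the embedding $\Vapo\hookrightarrow L^p(\Omega;C([0,T_0];E))$ gives $X_n(T_0)\to X(T_0)$ in $L^p(\Omega,\F_{T_0};E)$, and repeating the argument on $[T_0,2T_0],[2T_0,3T_0],\dots$ and gluing with Lemma~\ref{l.glue} would yield convergence on all of $[0,T]$; combined with the uniform $V_\alpha^p$-bound, a standard uniform-integrability step upgrades this to $X_n\to X$ in $\Vaq$ for every $q<p$, and since $\Vaq\hookrightarrow L^q(\Omega;C([0,T];E))$ the final assertion follows too.

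To estimate $\cL^{(n)}_{T_0}(X)-\cL_{T_0}(X)$ I would split it into its initial-value, deterministic-convolution and stochastic-convolution parts, and in each part separate ``coefficient frozen, data varying'' from ``data frozen, coefficient varying'', e.g. $S_n(\cdot)\xi_n-S(\cdot)\xi=S_n(\cdot)(\xi_n-\xi)+(S_n(\cdot)-S(\cdot))\xi$, and similarly for $\bS_n*F_n(\cdot,X)-\bS*F(\cdot,X)$ and $\bS_n\diamond G_n(\cdot,X)-\bS\diamond G(\cdot,X)$. The ``data varying'' terms are controlled by the uniform bounds: $S_n(\cdot)(\xi_n-\xi)$ by $\|\xi_n-\xi\|_{L^p}\to0$ (using the uniform semigroup bound and the uniform $\gamma$-boundedness of $\{S_n(s)\}$ from Lemma~\ref{l.S}); and $\bS_n*[F_n(\cdot,X)-F(\cdot,X)]$, $\bS_n\diamond[G_n(\cdot,X)-G(\cdot,X)]$ by the uniform convolution estimates behind Theorem~\ref{t.solution} times the convergence to $0$ of $\|F_n(\cdot,X)-F(\cdot,X)\|$, resp.\ $\|G_n(\cdot,X)-G(\cdot,X)\|$, in the relevant $L^p(\Omega;\cdot)$-norms. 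For $F$ this is (F2) plus dominated convergence, the domination coming from the uniform linear growth and $X\in L^p(\Omega;C([0,T];E))$. For $G$ it is a superposition-operator continuity statement: approximating the path $X(\cdot,\omega)$ uniformly by $E$-valued step functions, (G2) disposes of the finitely many constant pieces, the uniform $\gamma$-Lipschitz constant from (G1) absorbs the approximation error, and dominated convergence (dominating via the uniform growth in (G1) and $\|X\|_{\alpha,p}<\infty$) gives the claim.

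The crux is the ``coefficient varying'' terms, and for the stochastic convolution this is precisely where the machinery of Section~\ref{sect.preliminaries} enters. The deterministic terms $(S_n(\cdot)-S(\cdot))\xi$ and $(\bS_n-\bS)*F(\cdot,X)$ tend to $0$ directly from the Trotter--Kato convergence (Lemma~\ref{l.trotterkato}), which makes the relevant integrands converge pointwise to $0$ under a uniform dominating bound. For $(\bS_n-\bS)\diamond G(\cdot,X)$ I would apply the It\^o isomorphism (Proposition~\ref{p.NVW}) to replace, for each fixed $t$, the $L^p(\Omega)$-norm of the value by the $L^p(\Omega;\gamma(L^2(0,t;H),E))$-norm of $s\mapsto(S_n(t-s)-S(t-s))G(s,X(s))$, the second part of the $V_\alpha^p$-norm carrying in addition a scalar weight $(t-s)^{-\alpha}$. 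That weight can be absorbed into the operator represented by $s\mapsto(t-s)^{-\alpha}G(s,X(s))$, which lies in $\gamma(L^2(0,t;H),E)$ with finite $p$-th moment precisely because $X\in V_\alpha^p$ (apply (G) with $d\mu(s)=(t-s)^{-2\alpha}\one_{(0,t)}\,ds$). Fixing $\omega$ and $t$ and setting $R:=(t-\cdot)^{-\alpha}G(\cdot,X(\cdot,\omega))$, $M_n(s):=S_n(t-s)$, $M(s):=S(t-s)$, one verifies the hypotheses of Lemma~\ref{l.gammaconv} on $(0,t)$: differentiability with $M_n'(s)=-A_nS_n(t-s)$, the convergences $M_n(s)x\to M(s)x$ and $M_n'(s)x\to M'(s)x$ uniformly on compact subsets of $(0,t)$ by Lemma~\ref{l.trotterkato}, and uniform $\gamma$-boundedness of $\{S_n(t-s):s\in(0,t)\}$ by Lemma~\ref{l.S}. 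Then Lemma~\ref{l.gammaconv} yields $M_nR\to MR$ in $\gamma(L^2(0,t;H),E)$ (using that $E$, being UMD, contains no copy of $c_0$). Since $\|M_nR-MR\|_\gamma\lesssim\|R\|_\gamma$ uniformly (Proposition~\ref{p.gbdd} together with the uniform $\gamma$-bound) and $\int_0^{T_0}\E\|R\|_\gamma^p\,dt\lesssim\|X\|_{\alpha,p}^p$, a final dominated-convergence argument over $[0,T_0]\times\Omega$ completes the estimate.

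Assembling these pieces gives $\bigl\|\cL^{(n)}_{T_0}(X)-\cL_{T_0}(X)\bigr\|_{\Vapo}\to0$, hence convergence on $[0,T_0]$, and then on $[0,T]$ after iteration and gluing. The main obstacle is the last step: reconciling the varying analytic semigroups with the $\gamma$-radonifying structure of the stochastic convolution, which is exactly the situation for which Lemma~\ref{l.gammaconv} was designed — the reduction is to get the $V_\alpha^p$-weight and the semigroup into the form ``pointwise multiplier $\times$ fixed $\gamma$-operator'' so that the lemma applies. A secondary difficulty is the superposition-operator continuity $G_n(\cdot,X)\to G(\cdot,X)$, where (G2) only supplies pointwise-in-$x$ convergence and must be combined with the uniform $\gamma$-Lipschitz bound and a density argument.
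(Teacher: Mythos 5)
Your overall skeleton (contraction comparison on a small interval, reduction to $\cL_n(X)\to\cL(X)$ with $X$ frozen, splitting into ``data varying'' and ``semigroup varying'' parts, Lemma \ref{l.gammaconv} plus Trotter--Kato for the latter, then iteration and gluing via Lemma \ref{l.glue}) is the same as the paper's. But there is a genuine gap in the norm bookkeeping for the semigroup-varying convolution terms. You work throughout in $\Vapo$ and, for $(\bS_n-\bS)\diamond G(\cdot,X)$, you only control quantities at \emph{fixed} $t$: the It\^o isomorphism at time $t$ gives $\E\|[(\bS_n-\bS)\diamond G(\cdot,X)](t)\|^p$, and Lemma \ref{l.gammaconv} applied to $R=(t-\cdot)^{-\alpha}G(\cdot,X)$ handles a weighted integrand. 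Neither of these is the $V_\alpha^p$-norm of the difference: that norm consists of (i) $\E\sup_{t\le T_0}\|\cdot\|^p$, which does not follow from pointwise-in-$t$ moment convergence (and no maximal/factorization estimate applies directly to the difference of two stochastic convolutions with \emph{different} analytic semigroups), and (ii) $\int_0^{T_0}\E\|s\mapsto(t-s)^{-\alpha}\psi_n(s)\|_{\gamma(L^2(0,t),E)}^p\,dt$ where $\psi_n$ is the \emph{output process} $(\bS_n-\bS)\diamond G(\cdot,X)$ -- you have conflated this with the weighted $\gamma$-norm of the \emph{integrand}, which is a different object. The same issue, in milder form, affects $(\bS_n-\bS)\ast F(\cdot,X)$ and is why your concluding ``uniform-integrability upgrade to $\Vaq$'' is unsubstantiated: uniform integrability of scalar norms does not by itself produce the $\gamma$-part of the $V_\alpha^q$-norm of the difference.

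The paper closes exactly this gap by a different mechanism: it proves that the semigroup-varying terms are \emph{bounded} in $L^p(\Omega;C^\mu([0,T];E))$ (via uniform bounds on fractional convolution operators and the maximal estimate of \cite[Proposition 4.2]{vNVW08} applied to each semigroup separately) and converge pointwise in $t$, upgrades this to convergence in $L^q(\Omega;C^\lambda([0,T];E))$ for $q<p$, $\lambda<\mu$ by Lemma \ref{l.hoelderconvergence}, and then feeds the result into the embedding $L^p_\FF(\Omega;C^\lambda)\hookrightarrow V_\alpha^p$ of Lemma \ref{l.embed}, valid for $\lambda>\frac1\tau-\frac12$. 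This is precisely where the hypotheses $\frac1p<1-\frac1\tau$ and $\alpha>\frac1\tau-\frac12+\frac1p$ are used; your proposal never invokes them, which is a symptom of the missing step. It also explains why the conclusion is only in $\Vaq$ for $q<p$ (the H\"older-norm interpolation loses the endpoint exponent), so the whole fixed-point comparison must be run at exponent $q\in(2,p)$ with $\alpha>\frac1q$ from the start -- as the paper does -- rather than at $p$; your iteration over $[T_0,2T_0],\dots$ would in any case only propagate $L^q$-convergence of the shifted initial data, not the $L^p$-convergence your scheme presumes.
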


We structure the proof through a series of lemmas.

\begin{lem}\label{l.step1} 
Let $1\leq p < \infty$ and $0\leq \alpha < \half$.
Suppose that the operators $A_n$ and $A$ satisfy {\rm (A1)} and {\rm (A2)} and
that $\xi_n \to \xi $ in $L^p(\Omega, \cF_0;E)$. Then 
\[ S_n(\cdot )\xi_n \to S(\cdot )\xi\quad \mbox{in}\,\, \Vap .\] 
\end{lem}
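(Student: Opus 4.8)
The plan is to split $S_n(\cdot)\xi_n - S(\cdot)\xi$ into the two terms $S_n(\cdot)(\xi_n-\xi)$ and $(S_n(\cdot)-S(\cdot))\xi$, and to treat them separately. For the first term I will use the uniform boundedness provided by (A1): since $\n S_n(t)\n\le Me^{wt}$ uniformly in $n$ and $t\in[0,T]$, the map $\eta\mapsto S_n(\cdot)\eta$ is bounded from $L^p(\Omega,\F_0;E)$ into $\Vap$ with a norm bounded independently of $n$. The key point here is that $\eta\mapsto S_n(\cdot)\eta$ does land in $\Vap$: the $C([0,T];E)$-part is immediate from the uniform sup-bound, and for the $\gamma$-part one writes $s\mapsto(t-s)^{-\alpha}S_n(s)\eta$ and uses that this is $(t-s)^{-\alpha}$ times the $\gamma$-bounded family $\{S_n(s):s\in[0,t]\}$ acting on the rank-one operator $\one_{(0,t)}\otimes\eta$ — so by Proposition \ref{p.gbdd} (with $M(s)=(t-s)^{-\alpha}S_n(s)$, whose $\gamma$-bound is controlled using Lemma \ref{l.S} and the integrability of $(t-s)^{-\alpha}$ near $s=t$ since $\alpha<\tfrac12$) together with Proposition \ref{l.tensor}, the $\gamma(L^2(0,t),E)$-norm is $\lesssim \n\eta\n$ uniformly in $n$ and $t$. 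Hence $\n S_n(\cdot)(\xi_n-\xi)\n_{\alpha,p}\lesssim \n\xi_n-\xi\n_{L^p(\Omega;E)}\to 0$.

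For the second term $(S_n(\cdot)-S(\cdot))\xi$ I would argue in two stages: first for a fixed deterministic $\xi=x\in E$, then by density/uniform-boundedness for general $\xi\in L^p(\Omega,\F_0;E)$. For fixed $x$, the functions $M_n^0(t)=S_n(t)$ and $M^0(t)=S(t)$ satisfy the hypotheses of Lemma \ref{l.gammaconv} on any $(0,T)$ (as noted in the excerpt, using Lemma \ref{l.trotterkato} and Lemma \ref{l.S}), so applying that lemma to the fixed rank-one operator $R=\one_{(0,T)}\otimes x\in\gamma(L^2(0,T),E)$ gives $S_n(\cdot)x\to S(\cdot)x$ in $\gamma(L^2(0,T),E)$; one then upgrades this to convergence of $s\mapsto(t-s)^{-\alpha}S_n(s)x$ to $s\mapsto(t-s)^{-\alpha}S(s)x$ in $\gamma(L^2(0,t),E)$, uniformly in $t\in[0,T]$ — here the weight $(t-s)^{-\alpha}$ with $\alpha<\tfrac12$ is handled via Proposition \ref{p.gbdd} exactly as in the proof of Lemma \ref{l.gammaconv}, in fact one can just apply Lemma \ref{l.gammaconv} directly with $M_n(s)=(t-s)^{-\alpha}S_n(s)$. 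Together with $S_n(t)x\to S(t)x$ uniformly on $[0,T]$ from Lemma \ref{l.trotterkato}(1), dominated convergence over $t\in(0,T)$ yields $S_n(\cdot)x\to S(\cdot)x$ in $\Vap$ for each $x\in E$.

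To pass from fixed $x$ to a general random variable $\xi\in L^p(\Omega,\F_0;E)$, I would first treat simple $\F_0$-measurable $\xi=\sum_{i}\one_{A_i}x_i$ by linearity (using that the $A_i$ are $\F_0$-measurable so adaptedness is preserved and the $\gamma$-norms factor through the indicators), obtaining $S_n(\cdot)\xi\to S(\cdot)\xi$ in $\Vap$ for all simple $\xi$; then, since the operators $\xi\mapsto S_n(\cdot)\xi$ are uniformly bounded from $L^p(\Omega,\F_0;E)$ to $\Vap$ (by the first paragraph) and simple functions are dense in $L^p(\Omega,\F_0;E)$, a standard $3\varepsilon$-argument gives $S_n(\cdot)\xi\to S(\cdot)\xi$ in $\Vap$ for all $\xi\in L^p(\Omega,\F_0;E)$. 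Combining the two terms finishes the proof. I expect the only real technical point to be the verification that the weighted families $\{(t-s)^{-\alpha}S_n(s):s\in(0,t)\}$ have uniformly bounded $\gamma$-bound and that the convergence $S_n(\cdot)x\to S(\cdot)x$ in $\gamma(L^2(0,t),E)$ is uniform in $t\in[0,T]$; both are bookkeeping on top of Lemmas \ref{l.S}, \ref{l.gammaconv} and \ref{l.trotterkato}, with the restriction $\alpha<\tfrac12$ being exactly what makes the weight square-integrable.
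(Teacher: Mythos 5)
Your overall architecture (split off $S_n(\cdot)(\xi_n-\xi)$, prove $S_n(\cdot)\xi\to S(\cdot)\xi$ via Lemma \ref{l.gammaconv}, finish by dominated convergence/uniform bounds) is the same as the paper's, but there is a genuine gap in the way you handle the weight $(t-s)^{-\alpha}$ in the $\gamma$-part of the $V^p_\alpha$-norm. Both of your key steps invoke Proposition \ref{p.gbdd} or Lemma \ref{l.gammaconv} with multiplier families that are not known to be $\gamma$-bounded: you call $\{S_n(s):s\in[0,t]\}$ a ``$\gamma$-bounded family'', but Lemma \ref{l.S} only yields $\gamma$-boundedness of $\{s^{\beta}S_n(s)\}$ for \emph{strictly positive} exponents $\beta$ (its proof rests on the integrability of $\frac{d}{ds}\,s^\beta S_n(s)$, which fails for $\beta=0$ since $\n A_nS_n(s)\n\sim s^{-1}$), and $\gamma$-boundedness of the unweighted orbit, uniformly in $n$, is an extra R-boundedness--type assumption that does not hold for general analytic semigroups on general Banach spaces and is not part of (A1)--(A2). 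Your alternative suggestion, taking $M_n(s)=(t-s)^{-\alpha}S_n(s)$ (``whose $\gamma$-bound is controlled using the integrability of $(t-s)^{-\alpha}$''), cannot work at all: this family is unbounded in operator norm as $s\uparrow t$, hence never $\gamma$-bounded, so hypothesis (3) of Lemma \ref{l.gammaconv} and the hypothesis of Proposition \ref{p.gbdd} fail. Square-integrability of the scalar weight is not a substitute for $\gamma$-boundedness of the operator family.

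The fix is the paper's redistribution of weights: pick $\beta>0$ with $\alpha+\beta<\half$ and factor
\[
(t-s)^{-\alpha}S_n(s)\xi \;=\; \bigl[s^{\beta}S_n(s)\bigr]\cdot\bigl[(t-s)^{-\alpha}s^{-\beta}\xi\bigr],
\]
so that the operator family is $\{s^{\beta}S_n(s):s\in(0,T)\}$, which is $\gamma$-bounded uniformly in $n$ by Lemma \ref{l.S} and (A1), while the scalar function $s\mapsto(t-s)^{-\alpha}s^{-\beta}$ still lies in $L^2(0,t)$ because $\alpha+\beta<\half$. Proposition \ref{l.tensor} then gives the uniform bound $\lesssim\n\xi\n$ you want (this also repairs your first paragraph, i.e.\ the uniform boundedness of $\eta\mapsto S_n(\cdot)\eta$ and hence the $\xi_n-\xi$ term), and Lemma \ref{l.gammaconv} applied with $M_n(s)=s^{\beta}S_n(s)$ to the operator represented by $s\mapsto(t-s)^{-\alpha}s^{-\beta}\xi(\omega)$ gives, for each fixed $t$ and almost every $\omega$, convergence in $\gamma(L^2(0,t),E)$; dominated convergence in $\omega$ and $t$ (no uniformity in $t$ is needed) then yields convergence in $\Vap$, with the $C([0,T];E)$-part handled by Lemma \ref{l.trotterkato}(1) as you do. With this correction your reduction to simple $\F_0$-measurable $\xi$ plus a density argument is sound, though unnecessary: the paper applies Lemma \ref{l.gammaconv} pathwise to the random operator directly, since the uniform domination is already available.
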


\begin{proof}
 By Lemma \ref{l.trotterkato}(1), for every $x \in E$ we have $S_n(t)x \to S(t)x$
in $C([0,T];E)$ as $n \to \infty$.
Hence, $S_n(\cdot )\xi\to S(\cdot )\xi$ in $C([0,T];E)$ almost surely and,
noting that the semigroups $\bS_n$ 
are uniformly bounded on $[0,T]$, say by a constant $M_T$, we infer from
dominated convergence that 
\begin{equation}
\label{eq.Sn1}
\norm{S_n(\cdot )\xi - S(\cdot )\xi}_{L^p(\Omega; C([0,T];E))} \to 0.
\end{equation}
Also, $\norm{S_n(t)\xi_n - S_n(t)\xi} \leq M_T\norm{\xi_n-\xi}$, which  implies 
\[ \expect \norm{S_n(\cdot )\xi_n - S_n(\cdot )\xi}^p_{C([0,T];E)} \leq
M_T\norm{\xi_n-\xi}^p_{L^p(\Omega;E)}
\to 0.\] 
Combining these estimates we obtain  
\begin{equation}
\label{eq.Sn2}\expect \norm{S_n(\cdot )\xi_n - S(\cdot )\xi}_{C([0,T];E)}^p \to
0.
\end{equation}

Choose $\beta>0$ such that $\alpha + \beta < \half$ and put $M_n(t)
:= t^\beta S_n(t)$ and $M(t) : = t^\beta S(t)$. 
The finite
Borel measures $\mu_t^\alpha$ on 
$(0,t)$ are defined by
\[ \mu_t^\alpha (A) = \int_A (t-s)^{-2\alpha }\, ds.\]
It is straightforward (see \cite{vNVW08}) to verify that 
\begin{align}\label{eq:mu_a}
\phi \in \gamma (L^2(0,t,\mu_t^\alpha);E) \ \Longleftrightarrow \  [s\mapsto
(t-s)^{-\alpha}
\phi (s)]\in \gamma (L^2(0,t),E)
\end{align}
with identical norms.
Almost surely we have
\[ 
 \norm{S_n(\cdot )\xi - S(\cdot )\xi}_{\gamma (L^2(0,t,\mu_t^\alpha),E)}
= \norm{s\mapsto (t-s)^{-\a} s^{-\b} (M_{n}(s)\xi -M (s)\xi)}_{\gamma
(L^2(0,t),E)} .
\] 
Let $\gamma_T := \sup_n\gamma\big(\{ t^{\beta}S_n(t) \, : \, 0\leq t \leq T \}\big)$.
By Lemma \ref{l.S} and assumption (A1), $\gamma_T < \infty$.
Using the observation \eqref{eq:mu_a} combined with Proposition \ref{p.gbdd} and
Lemma \ref{l.tensor}, 
almost surely we obtain, for all $t \in (0,T)$ and indices $n$,
\begin{align*}
 \norm{S_n(\cdot )\xi}_{\gamma (L^2(0,t,\mu_t^{\alpha}),E)} & \leq 
\gamma_T \norm{s\mapsto (t-s)^{-\alpha} s^{-\beta}\xi}_{\gamma (L^2(0,t),E)}\\
& =  \gamma_T \norm{s\mapsto (t-s)^{-\alpha}s^{-\beta}}_{L^2(0,t)}\norm{\xi}.
\end{align*}
The same estimate also holds with $\bS_n$ replaced with $\bS$.
Note that
\begin{align*}  
\norm{s\mapsto (t-s)^{-\alpha}s^{-\beta}}_{L^2(0,t)}^2 
 =  t^{1-2\alpha - 2\beta}\int_0^1r^{-2\beta}(1-r)^{-2\alpha}\,dr, 
\end{align*}
which is finite. Since $\alpha + \beta < \frac{1}{2}$, the supremum over $t \in
[0,T]$ of this expression  is bounded,
say by $C_{T,\alpha, \beta}$. Hence we have
\[ \E\norm{S_n(\cdot )\xi - S(\cdot )\xi}_{\gamma (L^2(0,t,\mu_t^\alpha ),E)}^p
\leq  2 \g_T^p C_{T,\alpha, \beta}^p \norm{\xi}_{L^p(\Omega; E)}^p.
\]

Furthermore, by the observation following Lemma \ref{l.trotterkato}  we may
apply Lemma \ref{l.gammaconv} to the 
functions $M_n, M$ and the $\g$-radonifying operators represented by the
functions 
$s\mapsto (t-s)^{-\a}s^{-\b}\xi(\om)$ to conclude that
\[ \norm{S_n(\cdot )\xi - S(\cdot )\xi}_{\gamma (L^2(0,t,\mu_t^\alpha ),E)} \to
0 \]
almost surely.

Hence, by dominated convergence, 
\[ \int_0^T\expect \norm{S_n(\cdot )\xi - S(\cdot )\xi}_{\gamma
(L^2(0,t,\mu_t^\alpha ),E)}^p\, dt
 \to 0 .
\]
Together with \eqref{eq.Sn1} this shows that
 $S_n(\cdot)\xi \to S(\cdot )\xi$ in $\Vap$.

Arguing as before we see that
\[ \norm{S_n(\cdot )\xi_n - S_n(\cdot )\xi}_{\gamma (L^2(0,t,\mu_t^\alpha ),E)}
 \leq \gamma_T\norm{s\mapsto
(t-s)^{-\alpha}s^{-\beta}}_{L^2(0,t)}\norm{\xi_n-\xi},
\]
so
\[ \int_0^T\expect \norm{S_n(\cdot )\xi_n - S_n(\cdot )\xi}_{\gamma
(L^2(0,t,\mu_t^\alpha ),E)}^p\, dt
 \lesssim \norm{\xi_n -\xi}_{L^p(\Omega; E)}^p \to 0 .
\]
Combining this with \eqref{eq.Sn2} this gives
$ \n S_n(\cdot)\xi_n - S_n(\cdot)\xi\n \to 0$ in $\Vap$.

Collecting the estimates, the proof is complete.
\end{proof}

\begin{lem}\label{l.step2}
Let $E$ be a UMD Banach space and assume {\rm (A1), (A2), (F1), (F2), (G1)} and {\rm (G2)}.
Suppose that $\frac1p < \a < \half$ and let $\phi \in \Vap$ be given. Then, for all $0\le \lambda < \alpha - \frac1p$
and $1\leq q < p$ we have
\begin{enumerate}
 \item $\bS_n\ast F_n(\cdot, \phi ) - \bS_n \ast F(\cdot,\phi ) \to 0$ in $L^p(\Omega; C^\lambda([0,T];E ))$;
 \item $\bS_n\diamond G_n(\cdot, \phi ) - \bS_n \diamond G(\cdot,\phi ) \to 0$ 
in $L^p(\Omega; C^\lambda([0,T];E ))$;
\item $\bS_n\ast F(\cdot, \phi ) \to \bS \ast F(\cdot,\phi )$ in $L^q(\Omega; C^\lambda([0,T];E ))$;
\item $\bS_n\diamond G(\cdot, \phi ) \to \bS \diamond G(\cdot,\phi )$ in $L^q(\Omega; C^\lambda([0,T];E ))$.
\end{enumerate}

\end{lem}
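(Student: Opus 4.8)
The four statements are naturally split into two pairs: (1)–(2) concern convergence of the approximating convolution operators applied to the \emph{difference} of nonlinearities evaluated at a fixed $\phi$, while (3)–(4) concern convergence of the \emph{fixed} nonlinearities under $\bS_n\to\bS$. For all four I would first reduce the H\"older-norm convergence to the two hypotheses of Lemma \ref{l.hoelderconvergence}: a uniform bound in $L^p(\Omega;C^\lambda([0,T];E))$ and pointwise-in-$t$ convergence in measure. The uniform $C^\lambda$-bound is where the hypothesis $\lambda<\alpha-\frac1p$ enters: by the standard factorization/Da Prato--Kwapie\'n--Zabczyk-type estimates for stochastic and deterministic convolutions with analytic semigroups (as developed in \cite{vNVW08}), one controls $\n\bS_n\ast f\n_{L^p(\Omega;C^\lambda)}$ and $\n\bS_n\diamond g\n_{L^p(\Omega;C^\lambda)}$ by $\n f\n_{L^p(\Omega;L^\infty)}$ and $\n g\n_{L^p(\Omega;\gamma)}$ respectively, with constants depending on $A_n$ only through the uniform sectoriality constants in (A1); the It\^o isomorphism (Proposition \ref{p.NVW}) handles the stochastic term. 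Since $\phi\in\Vap$ and $F,F_n$ have uniform linear growth (F1) while $G,G_n$ have uniform $\gamma$-linear growth (G1), the inputs $F_n(\cdot,\phi), G_n(\cdot,\phi)$ are uniformly bounded in the relevant norms, giving the required uniform $C^\lambda$-bounds.

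For the pointwise-in-measure convergence in (1), fix $t$ and write $(\bS_n\ast[F_n-F](\cdot,\phi))(t)=\int_0^t S_n(t-s)\big(F_n(s,\phi(s))-F(s,\phi(s))\big)\,ds$. By (F2), for a.e.\ $(s,\omega)$ the integrand tends to $0$ in $E$ for the fixed value $x=\phi(s,\omega)$; by (A1) the $S_n$ are uniformly bounded and by (F1) the integrand is dominated by $C_F(1+\n\phi(s)\n)\in L^1((0,t))$ a.s.; dominated convergence gives convergence to $0$ a.s., hence in measure. For (2) the same scheme applies after passing through the It\^o isomorphism: $\E\n(\bS_n\diamond[G_n-G](\cdot,\phi))(t)\n^p\eqsim \E\n s\mapsto\one_{(0,t)}(s)S_n(t-s)[G_n-G](s,\phi(s))\n_{\gamma(L^2(0,t;H),E)}^p$, and one shows the $\gamma$-norm inside tends to $0$ a.s.\ and is uniformly $L^p(\Omega)$-bounded. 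The a.s.\ convergence of the $\gamma$-norm is the technical heart of (2): I would apply Lemma \ref{l.gammaconv} with $M_n=M=$ identity-type multipliers is not quite right, so instead I would use that $s\mapsto S_n(t-s)$ is a $\gamma$-bounded family (Lemma \ref{l.S}) acting on the operators represented by $[G_n-G](\cdot,\phi)$, combined with (G2) which gives $[G_n-G](\cdot,\omega,x)\to 0$ in $\gamma(L^2((0,t),\mu;H),E)$; a covariance-domination argument and Proposition \ref{prop.g-bound-cov} then close it.

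For (3)–(4), the new feature is that $\bS_n\to\bS$ rather than the nonlinearity varying; here I would invoke Lemma \ref{l.gammaconv} (via the observation following Lemma \ref{l.trotterkato} that $M_n^\beta(t)=t^\beta S_n(t)$ satisfy its hypotheses) to get, for fixed $t$ and a.e.\ $\omega$, that $\bS_n\ast F(\cdot,\phi)(t)\to\bS\ast F(\cdot,\phi)(t)$ in $E$ and $\bS_n\diamond G(\cdot,\phi)(t)\to\bS\diamond G(\cdot,\phi)(t)$ in probability — for the stochastic term one again routes through the It\^o isomorphism and shows the $\gamma$-norm of $s\mapsto\one_{(0,t)}(s)[S_n(t-s)-S(t-s)]G(s,\phi(s))$ tends to $0$ a.s.\ by Lemma \ref{l.gammaconv}. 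Uniform integrability from the $C^\lambda$-bound upgrades in-measure convergence to $L^q$-convergence for $q<p$, yielding (3)–(4); note these only claim $L^q$ (not $L^p$) convergence, which is exactly what Lemma \ref{l.hoelderconvergence} delivers and reflects the fact that the strong resolvent convergence gives no rate.

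**Main obstacle.** The delicate point is the a.s.\ convergence to $0$ of the $\gamma$-radonifying norms in (2) and (4): unlike the deterministic convolution, one cannot simply dominate pointwise in $s$, and one must combine the $\gamma$-boundedness of the semigroup family (Lemma \ref{l.S}, uniform in $n$ by (A1)) with the $\gamma$-space convergence in (G1)/(G2). The cleanest route is to first establish the claims for $\phi=\one_{(a',b')}\otimes S$ with $S$ finite rank — where Proposition \ref{p.gammaest} and the explicit differentiation argument of Lemma \ref{l.gammaconv} apply verbatim — and then pass to general $\phi\in\Vap$ by the density of such step functions in $\gamma(L^2(0,T;H),E)$ together with the uniform operator bounds from Proposition \ref{p.gbdd}; the uniform $L^p(\Omega)$-integrability needed to interchange limit and expectation is supplied by the $C^\lambda$-bounds established in the first step.
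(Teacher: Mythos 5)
The main gap concerns parts (1) and (2), which assert convergence in $L^p(\Omega;C^\lambda([0,T];E))$ at the full exponent $p$. You route all four parts through Lemma \ref{l.hoelderconvergence}, but from a uniform bound in $L^p(\Om;C^\lambda([0,T];E))$ and pointwise convergence in measure that lemma only yields convergence in $L^q(\Om;C^\mu([0,T];E))$ for $q<p$ and $\mu<\lambda$; it cannot produce the $L^p$-statements of (1) and (2). The point of (1)--(2) is that the difference sits in the nonlinearity, not in the semigroup, so one can argue linearly and directly: the uniform (in $n$) factorization estimate you quote bounds $\n\bS_n\ast f\n_{L^p(\Om;C^\lambda([0,T];E))}$ by $\n f\n_{L^p(\Om;L^r(0,T;E))}$ for suitable $r$, and applying it to $f=F_n(\cdot,\phi)-F(\cdot,\phi)$ together with (F1), (F2) and dominated convergence in $L^p(\Om;L^r(0,T;E))$ gives (1) with exponent $p$; likewise the maximal inequality of \cite[Proposition 4.2]{vNVW08} bounds the $L^p(\Om;C^\lambda)$-norm of $\bS_n\diamond\big(G_n(\cdot,\phi)-G(\cdot,\phi)\big)$ by $\big(\int_0^T\E\n G_n(\cdot,\phi)-G(\cdot,\phi)\n_{\g(L^2(0,t,\mu_t^\a;H),E)}^p\,dt\big)^{1/p}$ uniformly in $n$, which tends to zero by dominated convergence. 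Your pointwise-in-$t$ argument plus Lemma \ref{l.hoelderconvergence} proves a strictly weaker statement (it would still suffice for Theorem \ref{t.coeffdep}, but it is not the lemma as stated). Moreover, even for (3)--(4), to land in $C^\lambda$ rather than $C^\mu$ with $\mu<\lambda$ you must first establish the uniform bound at some exponent $\mu$ with $\lambda<\mu<\a-\frac1p$, as the paper does; with the bound only at $\lambda$, Lemma \ref{l.hoelderconvergence} also loses the H\"older exponent.

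The second gap is in what you call the technical heart of (2). Hypothesis (G2) gives $G_n(\cdot,\om,x)\to G(\cdot,\om,x)$ in the weighted $\g$-space only for fixed $x\in E$; one must still upgrade this to convergence along the random path, i.e.\ $G_n(\cdot,\om,\phi(\cdot,\om))\to G(\cdot,\om,\phi(\cdot,\om))$. Your closing step --- density of step functions with finite rank values in $\g(L^2(0,T;H),E)$ together with the uniform operator bounds of Proposition \ref{p.gbdd} --- is the right tool for the linear multiplier $S_n(t-\cdot)-S(t-\cdot)$ (that is exactly Lemma \ref{l.gammaconv}, which handles (4)), but it does not apply to the nonlinear maps $\psi\mapsto G_n(\cdot,\om,\psi)$: these are not bounded linear operators on $\g$-spaces, and the relevant uniformity is the Lipschitz constant $L_G$ of (G1), not a $\g$-bound. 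The correct argument approximates the path $\phi(\cdot,\om)$ by simple $E$-valued functions $\psi_0=\sum_{k}\one_{A_k}x_k$, applies (G2) to each $x_k$ together with the ideal property (multiplication by $\one_{A_k}$ is bounded on $L^2(0,t,\mu_t^\a;H)$), and then passes to general $\phi$ by a three-epsilon argument using the uniform $\g$-Lipschitz estimate of (G1) in the $L^2_\g(0,t,\mu_t^\a;E)$-norm. Finally, note that Lemma \ref{l.S} gives $\g$-boundedness only of the weighted family $\{s^\a S_n(s)\}$; the unweighted family $\{S_n(t-s)\}$ need not be $\g$-bounded, which is precisely why the weighted measures $\mu_t^\a$ (equivalently, \cite[Proposition 4.2]{vNVW08}) must appear in the estimate for the stochastic convolution.
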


\begin{proof}
 (1) Let us denote the fractional convolution operator of exponent $0<a<1$ associated
with $A_n$ by $I_{a,n}$: 
$$
 I_{a,n}f(t) := \frac{1}{\Gamma (a)}\int_0^t (t-s)^{a-1}S_n(t-s)f(s)\, ds. 
$$
Pick any $r>2$ and, noting that $\frac{1}{r}+ \lambda  < 1$, $a$ such that
$\frac{1}{r} + \lambda < a< 1$. Then
$$\bS_n \ast f = I_{a,n}I_{1-a,n}f$$ for all $f \in L^r(0,T;E)$. Moreover,
by the uniform sectoriality of the operators $A_n$, the operators $I_{a,n}$ 
are uniformly bounded on $L^r(0,T;E)$
 and uniformly bounded from $L^r(0,T;E)$ to
$C^\lambda([0,T];E)$ (cf. \cite{dpkz,dprzab2}). Hence, 
\[
\begin{aligned}
 \mathbb{E}\norm{\bS_n\ast F_n(\cdot,\phi) - \bS_n\ast F(\cdot,\phi
)}_{C^\lambda([0,T];E)}^p
 &\lesssim \mathbb{E}\norm{I_{1-a,n} (F_n(\cdot,\phi ) - F(\cdot,\phi
))}_{L^r(0,T;E)}^p\\ 
&\lesssim \mathbb{E}\norm{F_n(\cdot,\phi ) - F(\cdot,\phi ))}_{L^r(0,T;E)}^p,
\end{aligned}
\]
Now note that
\[ \norm{F_n(t, \omega, \phi(t,\omega )) - F(t, \omega, \phi(t, \omega ))} \leq
2C_F(1+ \norm{\phi(t, \omega)}),
\]
and the right-hand side belongs to $L^p(\Omega; L^r(0,T;E))$. 
Since for almost all $(t,\omega)$ we have $F_n(t, \omega, \phi(t,\omega )) \to
F(t, \omega, \phi (t,\omega ))$, 
 $\mathbb{E}\norm{F_n(\cdot, \phi ) - F(\cdot, \phi )}_{L^r(0,T;E)}^p\to 0$ follows by
dominated convergence. 
Thus $\mathbb{E}\norm{\bS_n\ast F_n(\cdot,\phi) - \bS_n\ast F(\cdot, \phi
)}_{C^\lambda([0,T];E)}^p\to 0$ as $n\to\infty$.\medskip

(2) By  \cite[Proposition 4.2]{vNVW08},
\[
\begin{aligned} 
&\norm{\bS_n\diamond G_n(\cdot, \phi ) - \bS_n\diamond G(\cdot,\phi
)}_{L^p(\Omega; C^\lambda([0,T];E ))}\\
& \qquad  \lesssim \Big(\int_0^T\expect\norm{G_n(\cdot, \phi) - G(\cdot, \phi
)}_{\gamma (L^2(0,t,\mu_t^\alpha;H ),E)}^p
\, dt \Big)^{\frac{1}{p}}.
\end{aligned}
\]
We note that for almost all $\omega$ we have
\[
 \norm{G_n (\cdot, \omega, \phi) - G(\cdot, \omega, \phi)}_{\g
(L^2(0,t,\mu_t^\alpha;H),E)}
\leq C_G(1+ \norm{\phi (\cdot, \omega )}_{L^2_\g(0,t,\mu_t^\alpha;E)}) .
\]
The right-hand side belongs to $L^p((0,T)\times \Omega )$ since $\phi
\in \Vap$. Hence if we prove
that $G_n(\cdot, \omega, \phi (\cdot, \omega )) \to G(\cdot, \omega, \phi
(\cdot, \omega ))$
in $\g (L^2(0,t,\mu_t^\alpha;H),E)$ for almost all $t$ and $\omega$, then, by
dominated
convergence, we conclude that $\bS_n\diamond G_n(\cdot, \phi ) - \bS
\diamond G(\cdot, \phi ) \to 0$
in $L^p(\Omega; C^\lambda ([0,T];E))$. 

Fix $t\in [0,T]$ and $\omega \in \Omega$. For notational
convenience 
we shall suppress the dependence on $\omega$. Let $\psi:= \sum_{k=1}^K \one_{A_k}x_k$
be a simple function.
Then $$G(\cdot, \psi (\cdot )) = \sum_{k=1}^K\one_{A_k}G(\cdot, x_k)$$ and
similarly for $G_n$. 
Note that $G_n(\cdot, x_k) \to G(\cdot, x_k)$ in $\g
(L^2(0,t,\mu_t^\alpha;H),E)$ for all 
$1\leq k \leq K$ by assumption (G2). Since multiplication by $\one_{A_k}$ is a
bounded operator 
on $L^2(0,t,\mu_t^\alpha;H)$
it follows from the ideal property of $\g$-radonifying operators that
$\one_{A_k}(\cdot)G(\cdot, x_k)
\to \one_{A_k}(\cdot)G(\cdot, x_k)$ in $\g (L^2(0,t,\mu_t^\alpha;H),E)$ for all
$1\leq k \leq K$. 
Summing up it follows that $G_n(\cdot, \psi ) \to G(\cdot, \psi )$ in $\g
(L^2(0,t,\mu_t^\alpha;H),E)$.

Now let $\psi, \psi_0 \in \g (L^2(0,t;\mu_t^\alpha),E)$ be arbitrary. By
assumption (G1) we have
\[ 
\begin{aligned}
&\norm{G_n(\cdot, \psi) - G(\cdot, \psi )}_{\g (L^2(0,t;\mu_t^\alpha;H),E)}\\
&\qquad  \leq 2L_G\norm{\psi - \psi_0}_{L^2_\g(0,t;\mu_t^\alpha;E)} +
\norm{G_n(\cdot, \psi_0 ) - 
G(\cdot, \psi_0 )}_{\g (L^2(0,t,\mu_t^\alpha;H),E)}.
\end{aligned}
\]
Thus, by first choosing a simple function $\psi_0$ close enough to $\psi$ and
then $n$ large enough, we see
that for any $\psi \in \g (L^2(0,t;\mu_t^\alpha),E)$ we have $G_n(\cdot, \psi )
\to G(\cdot, \psi )$ 
in $\g (L^2(0,t,\mu_t^\alpha;H),E)$. 

Using this result pathwise, it follows that, almost surely, 
$G_n(\cdot, \phi ) \to G(\cdot, \phi )$ in $\g (L^2(0,t,\mu_t^\alpha;H),E)$.
Since $t$ was arbitrary,
this finishes the proof of (2).\medskip

(3) We pick $\mu$ such that $\lambda < \mu < \a -\frac1p$. 
Arguing as in the proof of (1), for large $r$ we have
\[\begin{aligned}
   \expect\|\bS_n\ast F(\cdot,\phi ) - \bS\ast F(\cdot,\phi )\|_{C^\mu
([0,T];E)}^p
&\lesssim 2\expect\norm{F(\cdot, \phi )}_{L^r(0,T;E)}^p\\
& \lesssim \expect \Big(\int_0^T (1+ \norm{\phi (t)})^r\,
dt\Big)^{\frac{p}{r}} < \infty.
  \end{aligned}
\]
Now observe that, almost surely, we have
\[ \int_0^tS_n(t-s)F(s,\phi (s))\, dt \to \int_0^tS(t-s)F(s,\phi (s))\, ds\]
in $E$ for every $t \in [0,T]$, by  dominated convergence. 
Applying the dominated convergence theorem a second time, we see that
$\big[\bS_n\ast F(\cdot, \phi )\big](t) \to \big[\bS\ast F(\cdot, \phi
)\big](t)$ in $L^p(\Omega; E)$. 
Convergence of the deterministic convolution in $L^q(\Omega; C^\lambda ([0,T];E))$
follows from Lemma \ref{l.hoelderconvergence}.\medskip

(4) Arguing similarly as in the proof of (3) we see that
$\bS_n\diamond G(\cdot, \phi ) - \bS\diamond G(\cdot, \phi )$ is bounded in
$L^p(\Omega; C^\mu ([0,T];E))$ for $\lambda < \mu < \a - \frac1p$.

Now fix $t \in [0,T]$. Since $s\mapsto (t-s)^{-\alpha}G(s, \phi(s))$ belongs to
$\gamma (L^2(0,t;H),E)$) almost surely, it follows from Lemma \ref{l.gammaconv} as in the proof of Lemma 
\ref{l.step1} that,
almost surely, 
$S_n(t-\cdot )G(\cdot, \phi (\cdot )) \to S(t-\cdot )G(\cdot, \phi (\cdot ))$ in
$\gamma (L^2(0,t;H),E)$.
Furthermore, by Proposition \ref{p.gbdd}, 
\begin{align*}
 &  \norm{S_n(t-\cdot )  G(\cdot,\phi (\cdot ))}_{\gamma (L^2(0,t;H),E)}\\
 & \qquad \leq \gamma_{\alpha,T}\norm{(t-\cdot )^{-\alpha}G(\cdot, \phi (\cdot
))}_{\gamma (L^2(0,t;H),E)}
\leq \gamma_{\alpha,T} C_G(1+\norm{\phi}_{\alpha,p}) ,
\end{align*}
where $\gamma_{\alpha ,T}:= \sup_n\g (\{s^\alpha S_n(s)\, : \, 0<s \leq T\}) <
\infty$ by Lemma
\ref{l.S} and the uniform sectoriality of the operators $A_n$.
Hence, by dominated convergence and the It\^o isomorphism,
\[
\begin{aligned}
& \E \n [S_n\diamond G(\cdot,\phi)](t)- [S\diamond G(\cdot, \phi)](t)\n^p\\
&\qquad \eqsim \E \n S_n(t-\cdot )  G(\cdot,\phi (\cdot )) - S(t-\cdot)G(\cdot, \phi
(\cdot ))\n_{\gamma (L^2(0,t;H),E)}\to 0.
\end{aligned}
\]
Now Lemma \ref{l.hoelderconvergence} yields $\bS_n\diamond G(\phi ) \to
\bS\diamond G(\phi )$
in $L^q(\Omega; C^\lambda ([0,T];E))$.
\end{proof}

\begin{proof}[Proof of Theorem \ref{t.coeffdep}]
We may replace $q$ be some larger value and thereby assume that $q\in (2,p)$ and
$\a\in(\frac{1}{q},\half)$.\medskip

{\it Step 1} -- We prove the theorem for small $T_0$.

Let $\Lambda$ and $\Lambda_n$ denote the Lipschitz continuous mappings 
on $\Vaq$ used to solve \eqref{SDE} with data $(A,F,G,\xi)$ and $(A_n,F_n,G_n,\xi_n)$
respectively (see Theorem \ref{t.solution}).
We choose $T_0>0$ so small that, for some constant
$0 \leq c < 1$,
$$\sup_n \norm{\cL_n(\phi) - \cL_n(\psi )}_{\alpha,q} \leq c
\norm{\phi-\psi}_{\alpha,q}$$ 
for all $\phi, \psi \in V_{\alpha}^q([0,T_0]\times\Omega;E)$. This is possible
by Theorem \ref{t.solution}, noting that all estimates involving $A_n, F_n, G_n$ are uniform in $n$.  

We denote by $X$ and $X_n$ the unique fixed points in $V_\a^p([0,T_0]\times \Omega;E)$ of the operators
$\cL$ and $\cL_n$, so that
\begin{equation}\label{eq.lambda}
\begin{aligned}
X= \cL(X) & = S(\cdot )\xi + \bS\ast F(\cdot,X) + \bS\diamond G(\cdot,X),\\ 
X_n= \cL(X_n) &= S_n(\cdot )\xi_n + \bS_n\ast F_n(\cdot,X_n) + \bS_n\diamond G_n(\cdot,X_n).
\end{aligned}\end{equation}
We have
\begin{align*}
 \norm{X-X_n}_{\alpha,q} =  \norm{\cL(X) - \cL_n(X_n)}_{\alpha,q}
   \leq  \norm{\cL(X) - \cL_n(X)}_{\alpha,q} 
+ c\norm{X-X_n}_{\alpha,q},
\end{align*}
and therefore
$$ \norm{X-X_n}_{\alpha,q}\le 
(1-c)^{-1}\norm{\Lambda(X)-\Lambda_n(X)}_{\alpha,q}.
$$
Hence, in order to prove that $X_n\to X$ in $V_\a^q([0,T_0]\times\Omega;E)$ it suffices to prove that
$\cL_n (X) \to \cL(X)$ in $V_\a^q([0,T_0] \times\Omega;E)$. But this follows from Lemma \ref{l.step1} and, picking
$\lambda$ such that $\frac{1}{\tau}-\half < \lambda < \a -\frac1p$, from Lemma \ref{l.step2}
and the embedding of Lemma \ref{l.embed}.\medskip

{\it Step 2} -- We prove the result for general $T$.

Let $T_0$ as in Step 1. By Lemma \ref{l.glue}, we have
\[ \n X-X_n\n_{V_\a^q([0,\frac{3}{2}T_0]\times\Omega;E)} \lesssim
 \n X-X_n\n_{V_\a^q([0,T_0]\times\Omega;E)} + \n X-X_n\n_{V_\a^q([\half T_0,\frac{3}{2}T_0]\times\Omega;E)}.
\]
By Step 1, the first term on the right-hand side converges to 0 as $n \to \infty$. 
But so does the second term,
noting that $X$ and $X_n$ are the unique solutions of the `shifted' equations 
starting at initial time $\half T_0$
with initial values $X(\half T_0)$ and $X_n(\half T_0)$ respectively. 

Inductively, we obtain convergence in $V_\a^q([0,(1+\frac{k}{2})T_0]\times\Omega;E)$
for all $k \in \CN$ and hence in $\Vaq$ for all times $T$.
\end{proof}

\begin{rem}
Assume the hypotheses of Theorem \ref{t.coeffdep} and additionally that
$A_n \equiv A$. Then $$X_n \to X \ \hbox{ in }\Vap$$ 
(rather than only in $\Vaq$ for $1\le q<p$). In particular, the solution of the equation with
fixed coefficients $A, F$ and $G$ depends continuously on the
initial datum $\xi\in L^p(\O,\F_0;E)$ in the norm of  $\Vap$. 
This follows by repeating the proof of Theorem \ref{t.coeffdep}, and observing that
this time parts (3) and (4) of Lemma \ref{l.step2} are not needed.
\end{rem}

The second part of 
Theorem \ref{t.coeffdep} asserts that $X_n \to X$ in $L^q(\Omega; C([0,T];E))$. We will show next
that the `compensated solutions' even converge in the norm of $L^q(\Omega; C^\lambda([0,T];E))$.

\begin{thm}\label{t.hoelderconv}
Under the assumptions of Theorem \ref{t.coeffdep}, for all $0\le \lambda<  \half - \frac{1}{p}$
we have
\[ X_n - S_n(\cdot )\xi_n \to X - S(\cdot )\xi \ \hbox{ in } \ 
L^q(\Omega; C^\lambda ([0,T]; E))\]
 for all $1 \leq q < p$. 
\end{thm}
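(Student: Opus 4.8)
The starting point is the observation that the compensated solutions are pure convolution terms:
\[ \eta_n := X_n - S_n(\cdot)\xi_n = \bS_n\ast F_n(\cdot,X_n) + \bS_n\diamond G_n(\cdot,X_n) \]
and
\[ \eta := X - S(\cdot)\xi = \bS\ast F(\cdot,X) + \bS\diamond G(\cdot,X), \]
so that they inherit the better H\"older regularity of deterministic and stochastic convolutions. The plan is to apply Lemma \ref{l.hoelderconvergence} to the sequence $(\eta_n)$; for this I must verify that $(\eta_n)$ is bounded in $L^p(\Om;C^{\lambda'}([0,T];E))$ for some $\lambda'>\lambda$, and that $(\eta_n)_t\to\eta_t$ in measure for every $t\in[0,T]$. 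Fix $\lambda$ as in the statement, pick $\lambda'$ with $\lambda<\lambda'<\half-\frac1p$, and then $\alpha'$ with $\lambda'+\frac1p<\alpha'<\half$; this is possible because $\lambda'<\half-\frac1p$, and it forces in particular $\frac1p<\alpha'<\half$.

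First I would establish the uniform bound $\sup_n\n X_n\n_{V_{\alpha'}^p([0,T]\times\Om;E)}<\infty$. (By consistency of solutions for different exponents, $X_n=\sol(A_n,F_n,G_n,\xi_n)$ and $X=\sol(A,F,G,\xi)$ do lie in $V_{\alpha'}^p$.) This is obtained by re-running the existence argument with constants uniform in $n$: on a sufficiently small interval $[0,T_0]$ the maps $\cL_n$ are contractions on $V_{\alpha'}^p([0,T_0]\times\Om;E)$ with a common constant $c<1$, because by Theorem \ref{t.solution} their Lipschitz constants depend on $A_n,F_n,G_n$ only through the sectoriality constants and through $L_{F_n},L_{G_n}$, all of which are uniform by {\rm (A1), (F1), (G1)}. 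Hence $\n X_n\n_{\alpha',p}\le(1-c)^{-1}\n\cL_n(0)\n_{\alpha',p}$, and $\cL_n(0)=S_n(\cdot)\xi_n+\bS_n\ast F_n(\cdot,0)+\bS_n\diamond G_n(\cdot,0)$ is bounded in $V_{\alpha'}^p([0,T_0]\times\Om;E)$ uniformly in $n$ by the uniform linear growth of $F_n,G_n$, the uniform sectoriality of $A_n$, and the convergence $\xi_n\to\xi$ in $L^p(\Om,\F_0;E)$. Finitely many applications of Lemma \ref{l.glue}, with the intermediate initial data $X_n(kT_0)$ bounded in $L^p(\Om;E)$ uniformly in $n$ and $k$, then yield the bound on all of $[0,T]$.

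Granted this, I would bound $(\eta_n)$ in $L^p(\Om;C^{\lambda'}([0,T];E))$. For the deterministic part, the argument in the proof of Lemma \ref{l.step2}(1) with $\lambda'$ in place of $\lambda$ — factoring $\bS_n\ast = I_{a,n}I_{1-a,n}$ for suitable $a$ with $\frac1r+\lambda'<a<1$, $r$ large, and using that the $I_{a,n}$ are uniformly bounded on $L^r(0,T;E)$ and from $L^r(0,T;E)$ into $C^{\lambda'}([0,T];E)$ — gives $\n\bS_n\ast F_n(\cdot,X_n)\n_{L^p(\Om;C^{\lambda'})}\lesssim\n F_n(\cdot,X_n)\n_{L^p(\Om;L^r(0,T;E))}\lesssim 1+\n X_n\n_{L^p(\Om;C([0,T];E))}$, which is bounded by the first step. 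For the stochastic part, \cite[Proposition 4.2]{vNVW08}, applied exactly as in the proof of Lemma \ref{l.step2}(2) but with $\alpha'$ in place of $\alpha$ (legitimate since $\lambda'<\alpha'-\frac1p$), bounds $\n\bS_n\diamond G_n(\cdot,X_n)\n_{L^p(\Om;C^{\lambda'})}$ by a multiple of $\bigl(\int_0^T\E\n G_n(\cdot,X_n)\n^p_{\gamma(L^2(0,t,\mu_t^{\alpha'};H),E)}\,dt\bigr)^{1/p}$, which by the uniform linear growth of $G_n$ is $\lesssim 1+\n X_n\n_{\alpha',p}$, again bounded; all implied constants here are uniform in $n$ by {\rm (A1), (F1), (G1)} and Lemma \ref{l.S}. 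The same estimates with the limiting data show $\eta\in L^p(\Om;C^{\lambda'}([0,T];E))$ as well.

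It remains to note the convergence in measure. By Theorem \ref{t.coeffdep}, $X_n\to X$ in $\Vaq$, hence in $L^q(\Om;C([0,T];E))$, so $(X_n)_t\to X_t$ in $L^q(\Om;E)$ for each $t$; by Lemma \ref{l.step1}, $S_n(\cdot)\xi_n\to S(\cdot)\xi$ in $\Vap$, hence in $L^q(\Om;C([0,T];E))$, so $(S_n(\cdot)\xi_n)_t\to(S(\cdot)\xi)_t$ in $L^q(\Om;E)$. Subtracting, $(\eta_n)_t\to\eta_t$ in $L^q(\Om;E)$, and in particular in measure, for every $t\in[0,T]$. Lemma \ref{l.hoelderconvergence}, applied with $\lambda'$ in place of its exponent $\lambda$ and with our $\lambda$ in place of $\mu$ (note $\lambda<\lambda'$), then gives $\eta_n\to\eta$ in $L^q(\Om;C^\lambda([0,T];E))$ for all $1\le q<p$, which is the claim. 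The main obstacle is the first step: obtaining the $V_{\alpha'}^p$-bound on the solutions with $\alpha'$ arbitrarily close to $\half$, and, throughout, checking that every constant appearing in the convolution estimates is independent of $n$ — both of which reduce to the uniform sectoriality {\rm (A1)} together with the uniform Lipschitz and growth constants in {\rm (F1)} and {\rm (G1)}.
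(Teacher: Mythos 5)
Your argument is correct, but it follows a genuinely different route from the paper's. The paper writes the compensated solutions as $\Lambda_{n,0}(X_n)$ and $\Lambda_0(X)$, where $\Lambda_{n,0},\Lambda_0$ are the solution maps with zero initial datum, splits
$\n \Lambda_0(X)-\Lambda_{n,0}(X_n)\n \le \n \Lambda_0(X)-\Lambda_{n,0}(X)\n + \n \Lambda_{n,0}(X)-\Lambda_{n,0}(X_n)\n$,
handles the first term by Lemma \ref{l.step2} (convergence of the convolutions with the \emph{fixed} argument $X$ directly in $L^q(\Om;C^\lambda)$), and the second by the uniform Lipschitz continuity of $\Lambda_{n,0}$ from $V_\a^q$ into $L^q(\Om;C^\lambda([0,T];E))$ (standard factorization) combined with $X_n\to X$ in $V_\a^q$ from Theorem \ref{t.coeffdep}. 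You instead prove a uniform a priori bound: first $\sup_n\n X_n\n_{V_{\alpha'}^p}<\infty$ with $\alpha'$ close to $\half$ by re-running the fixed-point argument with constants uniform in $n$, then a uniform bound of the compensated solutions in $L^p(\Om;C^{\lambda'})$ with $\lambda<\lambda'<\half-\frac1p$ via the same convolution estimates used in Lemma \ref{l.step2}, and you conclude by combining this with the pointwise-in-$t$ convergence in measure (from Theorem \ref{t.coeffdep} and Lemma \ref{l.step1}) through Lemma \ref{l.hoelderconvergence} — i.e., you promote the interpolation lemma, which the paper only uses inside Lemma \ref{l.step2}(3),(4), to the top level. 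What each approach buys: the paper's route needs the uniform Lipschitz property of the compensated solution maps into H\"older-valued spaces but no new a priori bounds beyond the fixed $\a$; yours needs only growth (not Lipschitz) estimates in H\"older norms, at the price of establishing the uniform $V_{\alpha'}^p$-bound for $\alpha'$ near $\half$ — and the loss from $\lambda'$ down to $\lambda$ and from $p$ down to $q$ is already built into the statement, so nothing is lost. Two small points to tighten: in the globalization of the a priori bound the subintervals must overlap on sets of positive length for Lemma \ref{l.glue} to apply (use half-shifted intervals as in Step 2 of the proof of Theorem \ref{t.coeffdep}, with the restricted process identified as the solution of the shifted equation with initial value $X_n(kT_0/2)$); and one should record, as you implicitly do via the embeddings stated after Theorem \ref{t.solution}, that the $V_{\alpha'}^p$-fixed point coincides with $X_n$ by uniqueness, so the bound really concerns the same process.
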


\begin{proof}
We may 
assume that $\frac1\tau-\frac12<\lambda<\frac12-\frac1p$. 
Choose $0<\a<\frac12$ in such a way that $\frac1\tau-\frac12<\lambda<\a-\frac1p$.

Let $\Lambda_0$ and $\Lambda_{n,0}$ denote the Lipschitz continuous mappings used to
solve \eqref{SDE} with data $(A,F,G, 0)$ and $(A_n,F_n,G_n,0)$ respectively, i.e.
they are given as in \eqref{eq.lambda} with $\xi_n\equiv \xi = 0$.
We have
\[ 
\begin{aligned}
\ & \n X - S(\cdot )\xi - X_n + S_n(\cdot )\xi_n \n_{L^q(\Omega; C^\lambda ([0,T];E))}\\
& \ \  \leq \n \Lambda_0(X) - \Lambda_{n,0}(X)\n_{L^q(\Omega; C^\lambda ([0,T];E))} 
+\n \Lambda_{n,0}(X) - \Lambda_{n,0}(X_n)\n_{L^q(\Omega; C^\lambda ([0,T];E))}.
\end{aligned}
\]
As a direct consequence of Lemma \ref{l.step2},  $\Lambda_{n,0}(X) \to \Lambda_0(X)$ in 
$L^q(\Omega; C^\lambda ([0,T];E))$.

Combining a standard factorization argument (e.g., 
as in the proof of \cite[Theorem 6.2]{vNVW08}) with the assumptions on $F_n$ and $G_n$, 
one sees that the mappings $\Lambda_{n,0}$ are Lipschitz continuous  from
$\Vap$ to $L^q(\Omega; C^\lambda ([0,T];E))$, with uniformly bounded Lipschitz constants. 
Thus
\[ \n \Lambda_{n,0}(X) - \Lambda_{n,0}(X_n)\n_{L^q(\Omega; C^\lambda ([0,T];E))}
 \lesssim \n X-X_n\n_{\a,q} \to 0
\]
by Theorem \ref{t.coeffdep}. This finishes the proof.
\end{proof}

\section{Applications}\label{sect.applications}

\subsection{Approximating the noise}\label{subsec.noise} 
As a first application we show that if $H$ is separable, we may always
approximate the 
cylindrical Brownian motion $W_H$ with finite dimensional noise. The strategy is
to choose a sequence
of projections $P_n$ on $H$ with finite dimensional ranges which converges
strongly to the identity.
Then we approximate the map $G$ by the functions $G_n := GP_n$.

For $M$-type 2 spaces, such approximations were considered in \cite{Brz97}.
In order to apply our results from the previous section
we must check that (G1) and (G2) hold.

Assumption (G1) follows from
the ideal property of $\gamma$-radonifying operators and the uniform boundedness
of the 
projections $P_n$. Assumption (G2) is an immediate consequence of
\cite[Proposition 2.4]{vNVW07}. 

\subsection{Yosida approximations}\label{subsec.Yosida}
As we have already mentioned in the introduction, from a theoretical point of
view it is 
useful to be able to approximate the generator $A$
by its {\em Yosida approximands} $A_n := n^2R(n,A) -n$.

For Hilbert spaces $E$, 
Yosida approximations for stochastic evolution equations are considered 
in Da Prato and Zabczyk \cite{dprzab2}
(see also \cite{Bier} for an expanded argument), where continuous dependence in $L^p(\O;C([0,T];E))$ is obtained 
without analyticity assumptions on $A$. 

In order to apply our results we must check that assumptions (A1) and (A2) hold
for these operators. The uniform sectoriality (A1) follows from \cite[Proposition 2.1.1 (f)]{haase}.
As for the strong resolvent convergence (A2), we note that the standard proof of
the Hille-Yosida
theorem is to prove that the semigroups $\bS_n$ generated by $A_n$ are uniformly
exponentially
bounded and converge strongly to the semigroup $\bS$ generated by $A$. Taking
Laplace transforms,
the strong resolvent convergence follows. See also \cite[Section 3.6]{abhn}.

\subsection{Approximating the coefficients in parabolic
SPDEs}\label{subsect:global}

In this section we apply our results to stochastic partial differential
equation.
For simplicity, we confine ourselves to the situation where the nonlinearities
$f$ and $g$
are time-independent and consider equations of the form 
$$
\left\{
\begin{aligned}\displaystyle
\frac{\partial u}{\partial t}(t,x ) &= \mathcal{A}u (t,x) + f(u(t,x)) +
\sum_{k=1}^K g_k(u(t,x))\frac{\partial W_k}{\partial t} (t), 
&& x \in \mathcal{O}, && t>0,
\\
 u(t,x) &= 0, 
&& x \in \partial \mathcal{O}, &&  t>0, 
\\
 u(0,x) &= \xi(x), && x \in \mathcal{O} .&&
\end{aligned}\right.
$$
Here  $\mathcal{O}$ is a bounded open domain in $\R^d$ and $\mathcal{A}$ is the
second order divergence form differential operator 
$$\mathcal{A}u(x)= \sum_{i=1}^d \frac{\partial}{\partial
x_i}\Big(a_{ij}(x)\sum_{j=1}^d\frac{\partial u}{\partial x_j}(x)\Big)
+ \sum_{j=1}^d b_j(x) \frac{\partial u}{\partial x_j}(x).$$
The driving processes $W_k$ are independent real-valued standard Brownian
motions.
Under the assumptions of Theorem \ref{thm:main} we would like to approximate the coefficients 
${\bf a}=(a_{ij})$ and ${\bf b} =
(b_j)$ as well as the functions $f$ and $g_k$, and study the convergence of the
approximate solutions to the exact solution.

In order to reformulate the above SPDE as a stochastic Cauchy problem on the
Banach space
$L^r(\mathcal{O})$ (we use the exponent $r$ since the exponents $p$ and $q$ have
already been used in a different meaning)
we use a variational approach.
Consider the sesquilinear form
\begin{equation}\label{eq.form}
 \mathfrak{a}[u,v] := \int_{\mathcal{O}} (\mathbf{a}\nabla u) \cdot
\overline{\nabla v} +
 (\mathbf{b}\cdot \nabla u) \overline{v} \, dx
\end{equation} 
on the domain
$$ \Dom(\mathfrak{a}) := H^1_0(\mathcal{O}).$$
The sectorial operator $A$ on $L^2(\OO)$ associated with $\mathfrak{a}$
generates a strongly continuous analytic 
semigroup $\bS$, which by \cite{daners00} extrapolates to a consistent family of
strongly continuous analytic 
semigroups $\bS^{(r)}$ on $L^r(\mathcal{O})$ for $1<r<\infty$. We denote their
generators by $A^{(r)}$. 
Thus, $\bS^{(2)} = \bS$ and $A^{(2)} = A$. 
The forms $\mathfrak{a}_n$ and the associated semigroups $\bS_n^{(r)}$
with generators $A_n^{(r)}$ are defined likewise.

\begin{lem}\label{l.apA}
If {\rm (i), (ii)} and {\rm (iv)} of Theorem \ref{thm:main} hold, then the operators $A^{(r)}$ and $A_n^{(r)}$ satisfy 
{\rm
(A1)} and {\rm (A2)}.
\end{lem}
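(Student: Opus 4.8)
The goal is to verify the two structural hypotheses (A1) and (A2) for the $L^r(\mathcal{O})$-realizations $A^{(r)}, A_n^{(r)}$ of the divergence-form operators, given only the symmetry, uniform ellipticity and uniform boundedness of the coefficients together with their almost-everywhere convergence. The natural route is to work on $L^2(\mathcal{O})$ first, using form methods, and then to extrapolate to $L^r$ via the Gaussian/heat-kernel bounds of \cite{daners00}.

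First I would establish (A1). On $L^2(\mathcal{O})$ the forms $\mathfrak{a}$ and $\mathfrak{a}_n$ are, by (i) and (ii), uniformly elliptic and uniformly bounded on $H_0^1(\mathcal{O})$ relative to the fixed form domain: there are constants (depending only on $\kappa$ and $C$) witnessing $\Re\mathfrak{a}_n[u,u] \gtrsim \|u\|_{H_0^1}^2 - \omega\|u\|_{L^2}^2$ and $|\mathfrak{a}_n[u,v]| \lesssim \|u\|_{H_0^1}\|v\|_{H_0^1}$ after the first-order term $\mathbf{b}\cdot\nabla u$ is absorbed by Young's inequality. This is the standard Lax--Milgram/sectorial-form picture, and it yields that $A_n$ (and $A$) are sectorial of one common type $(M,\omega)$ on $L^2(\mathcal{O})$; equivalently the semigroups $\bS_n$ are uniformly analytic. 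To pass to $L^r$ I would invoke the extrapolation result of Daners \cite{daners00}: uniform ellipticity plus boundedness of the coefficients gives Gaussian upper bounds for the kernels of $\bS_n$ with constants depending only on $\kappa, C, d$, and such uniform Gaussian bounds force the $L^r$-realizations $A_n^{(r)}$ to be uniformly sectorial (indeed uniformly bounded holomorphic functional calculus, but sectoriality suffices here). Consistency of the family $\bS_n^{(r)}$ across $r$ is automatic from the construction. This gives (A1).

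Next, (A2): strong resolvent convergence $R(\lambda, A_n^{(r)})x \to R(\lambda, A^{(r)})x$ for $x \in L^r(\mathcal{O})$ and $\Re\lambda > \omega$. I would again do this on $L^2$ first. Fix $\Re\lambda > \omega$, let $x \in L^2(\mathcal{O})$, and set $u_n := R(\lambda, A_n)x$, $u := R(\lambda, A)x$. From the variational characterization $\mathfrak{a}_n[u_n, v] + \lambda(u_n, v) = (x, v)$ for all $v \in H_0^1$, coercivity gives a uniform bound on $\|u_n\|_{H_0^1}$; so along a subsequence $u_n \rightharpoonup \tilde u$ weakly in $H_0^1$ and strongly in $L^2$ (Rellich). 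The key point is that $\mathbf{a}_n \to \mathbf{a}$, $\mathbf{b}_n \to \mathbf{b}$ a.e. together with the uniform $L^\infty$ bound (ii) give, by dominated convergence, $\mathbf{a}_n \nabla \varphi \to \mathbf{a}\nabla\varphi$ in $L^2$ for each fixed test function $\varphi$, hence $\mathfrak{a}_n[u_n, \varphi] \to \mathfrak{a}[\tilde u, \varphi]$ (pair the strong convergence of the coefficient-weighted test gradient against the weak convergence of $\nabla u_n$), so $\tilde u = R(\lambda, A)x = u$; since the limit is independent of the subsequence, the whole sequence converges strongly in $L^2$. Extrapolating this to $L^r$ is the step I expect to be the main technical obstacle: one cannot argue variationally on $L^r$ for $r \neq 2$. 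The clean way is an interpolation/density argument — the resolvents $R(\lambda, A_n^{(r)})$ are uniformly bounded on $L^r$ by (A1), and on the dense subspace $L^r \cap L^2$ one has convergence in $L^2$; upgrading to $L^r$-convergence uses the uniform Gaussian bounds to control $R(\lambda, A_n^{(r)})x$ in $L^r$ by, say, an $L^2$-$L^r$ smoothing estimate on a truncation, so that $L^2$-convergence plus uniform $L^r$-boundedness of tails yields $L^r$-convergence on the dense set, and then a $3\varepsilon$-argument with the uniform operator bounds finishes it.

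A slicker alternative for the extrapolation, which I would mention as the preferred write-up, is to use that strong resolvent convergence is equivalent to strong convergence of the semigroups (Trotter--Kato, as in \cite[\S3.6]{abhn}) together with consistency: having proved $\bS_n(t) \to \bS(t)$ strongly on $L^2$, and knowing the $\bS_n^{(r)}$ are uniformly bounded analytic semigroups on $L^r$ that are consistent with $\bS_n$ on $L^2 \cap L^r$, one gets strong convergence $\bS_n^{(r)}(t)x \to \bS^{(r)}(t)x$ first on the dense set $L^2 \cap L^r$ (using the uniform Gaussian bounds to dominate) and then on all of $L^r$ by uniform boundedness; taking Laplace transforms recovers (A2) on $L^r$. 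In either approach the real content is the a.e.-convergence-of-coefficients $\Rightarrow$ form convergence $\Rightarrow$ resolvent convergence chain on $L^2$, with the $L^r$ passage being a soft consequence of the uniform kernel bounds supplied by \cite{daners00}.
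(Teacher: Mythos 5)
Your proposal is essentially correct, but it takes a different route from the paper in both halves. For (A1), the paper does not pass through Gaussian kernel bounds: it gets a uniform sector for the semigroups on $L^2$ from the common numerical range of the forms $\mathfrak{a}_n$, then uses the explicit quasi-contractivity estimate $\norm{S_n^{(s)}(t)}\le e^{wt}$ from \cite[Theorem 5.1]{daners00} on an auxiliary exponent $s$ chosen so that $r$ lies between $s$ and $2$, and concludes uniform sectoriality on $L^r$ by Stein interpolation (\cite[Lemma 5.8]{kunweis}); your route via uniform Gaussian upper bounds and the theorem that such bounds propagate bounded analyticity to all $L^p$, $1<p<\infty$, is also valid (the constants in \cite{daners00} depend only on $\kappa$, $C$, $d$), but it invokes heavier machinery than the paper needs. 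For (A2), the paper simply cites \cite[Theorem 5.2.4]{daners08} for strong resolvent convergence on $L^2$ and then, using that $A^{(2)}$ has \emph{compact} resolvent (compactness of $H^1_0(\mathcal{O})\hookrightarrow L^2(\mathcal{O})$ on the bounded domain), invokes \cite[Theorem 4.3.4]{daners08} to transfer the convergence to the whole $L^r$ scale; your variational proof of the $L^2$ statement (uniform $H^1_0$ bounds, Rellich, a.e.\ convergence of coefficients plus dominated convergence to identify the weak limit, subsequence argument) is a correct self-contained substitute for the first citation. The one step you should tighten is the upgrade from $L^2$- to $L^r$-convergence of the resolvents when $r>2$: the phrase about ``smoothing on a truncation and uniform $L^r$-boundedness of tails'' is not yet an argument. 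The clean implementation is to use consistency of the resolvents on $L^2\cap L^r$ together with the uniform $L^\infty$ (or large-$L^s$) bounds coming from the kernel estimates, and interpolate: for $x$ in a dense set such as $L^\infty(\mathcal{O})$ one has $\norm{(R_\lambda(A_n)-R_\lambda(A))x}_{L^r}\le \norm{(R_\lambda(A_n)-R_\lambda(A))x}_{L^2}^{\theta}\,\norm{(R_\lambda(A_n)-R_\lambda(A))x}_{L^\infty}^{1-\theta}\to 0$, and then a three-epsilon argument with the uniform $L^r$ resolvent bounds from (A1) finishes it (for $r<2$ the boundedness of $\mathcal{O}$ makes the upgrade immediate). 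With that step made precise, your argument buys a proof that avoids the compactness of the resolvent (so it would survive on unbounded domains), whereas the paper's citations to \cite{daners08} keep the proof short but tie it to the compact-resolvent setting.
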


\begin{proof}
(A1): It follows from the uniform ellipticity and boundedness condition that the numerical ranges 
of the forms $\mathfrak{a}_n$ are contained in a common right open sector around
the real axis. This in turn implies that there exists a constant $c\geq 0$ and
an angle $\vartheta>0$
such that the shifted operators $A_n^{(2)} - c$ generate analytic semigroups
$T_n^{(2)}(t) = e^{-ct}S_n^{(2)}(t)$ which are uniformly bounded on the sector 
$\Sigma_\vartheta:= \{ z \in \CC\setminus \{0\}\, : \, |\arg z | < \vartheta\}$. 

Now pick $1<s<\infty$, $s\not=2$, such that $r$ lies between $s$ and $2$ and
put 
\[ w:= 2\max\{s-1, s'-1\}\kappa^{-1}C^2.\] Here, $s'$ denotes
the conjugate index to $s$, and $\kappa$ and $C$ are as in Theorem \ref{thm:main}. 
It follows from \cite[Theorem 5.1]{daners00} that
$\norm{S_n^{(s)}(t)} \leq e^{wt}$ for all 
$t\geq 0$. By taking a larger value for $c$ if necessary, it follows that
the semigroup $T_n^{(s)}(t) = e^{-ct}S_n^{(s)}(t)$ satisfies 
$\norm{T_n^{(s)}(t)} \leq 1$.

We are now in a position to use the Stein interpolation theorem 
in the version of \cite[Lemma
5.8]{kunweis}. It follows that the semigroups
$\mathbf{T}_n^{(r)}$ are uniformly bounded on a slightly smaller sector
$\Sigma_{ \vartheta'}$. 
Rescaling again, this implies that the operators $A_n^{(r)}$ are uniformly
sectorial.

(A2): It follows from (a special case of)
\cite[Theorem 5.2.4]{daners08}
that $A_n^{(2)} \to A^{(2)}$ in the strong resolvent sense. Since $\mathcal{O}$
is bounded, the embedding 
$H^1_0(\mathcal{O})\inject L^2(\mathcal{O})$ is compact. In particular,
$A^{(2)}$ has compact resolvent. It now follows from
\cite[Theorem 4.3.4]{daners08} that $A_n^{(r)} \to A^{(r)}$ in the strong
resolvent sense.
\end{proof}

To simplify notation, in what follows we fix $1<r<\infty$ and write $A :=
A^{(r)}$, $A_n := A_n^{(r)}$, and 
$\bS := \bS^{(r)}$, $\bS_n := \bS_n^{(r)}$. 

\begin{lem}\label{l.apFG}
Assume that {\rm (iii)} and {\rm (v)} of Theorem \ref{thm:main} hold.
\begin{enumerate}
\item
The maps $F, F_n:  L^r(\mathcal{O}) \to
L^r(\mathcal{O})$
defined by 
$$[F(u)](x) := f( u(x)), \quad [F_n( u)](x) := f_n(u(x)),$$ 
 satisfy {\rm (F1)} and {\rm (F2)}.
\item The maps $G, G_n : L^r(\mathcal{O}) \to
\mathscr{L}(\CR^K, L^r(\mathcal{O}))$ defined by
$$ [G(u)h](x) := \sum_{k=1}^K g_k(u(x))[e_k,h] , \quad  [G_n(u)h](x) := 
\sum_{k=1}^K g_{n,k}(u(x))[e_k,h],$$ 
where $(e_k)_{k=1}^K$ is the standard unit basis of $\CR^K$,
satisfy {\rm (G1)} and {\rm (G2)}.
\end{enumerate}
\end{lem}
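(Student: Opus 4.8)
The plan is to verify conditions (F1), (F2), (G1), and (G2) for the Nemytskii-type operators built out of $f$, $f_n$, $g_k$, $g_{k,n}$. For part (1), the point is that a Lipschitz function $f:\R\to\R$ with Lipschitz constant at most $C$ induces a map $F$ on $L^r(\mathcal O)$ that is Lipschitz with the same constant: indeed $\|F(u)-F(v)\|_{L^r}^r = \int_{\mathcal O}|f(u(x))-f(v(x))|^r\,dx \le C^r\int_{\mathcal O}|u(x)-v(x)|^r\,dx$, and the linear growth bound follows similarly from $|f(s)|\le |f(0)|+C|s|$ together with the finiteness of $|\mathcal O|$ (so that constants lie in $L^r(\mathcal O)$). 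The same estimates apply uniformly to each $F_n$ since $\|f_n\|_{\mathrm{Lip}}\le C$, giving (F1); note there is no time or $\omega$ dependence so the measurability requirements are trivial. For (F2), one uses that $f_n\to f$ pointwise on $\R$ together with dominated convergence: for fixed $u\in L^r(\mathcal O)$, $f_n(u(x))\to f(u(x))$ for a.e.\ $x$ and $|f_n(u(x))-f(u(x))|\le 2|f_n(0)-f(0)| + \text{(bounded)}$; more carefully, $|f_n(u(x))| \le |f_n(0)| + C|u(x)|$ is dominated uniformly in $n$ once one observes $\sup_n |f_n(0)| <\infty$ (which follows from $f_n(0)\to f(0)$), so $F_n(u)\to F(u)$ in $L^r(\mathcal O)$.

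For part (2), the key structural observation is that since the noise is finite-dimensional, $H = \R^K$ with its standard basis, and $\gamma(\R^K, L^r(\mathcal O))$ is isomorphic to $(L^r(\mathcal O))^K$ with an equivalent norm (a Gaussian sum of $K$ vectors in an $L^r$-space). Concretely, $G(u)$ is the operator $h\mapsto \sum_k g_k(u)\,[e_k,h]$, which corresponds under this identification to the $K$-tuple $(g_1(u),\dots,g_K(u))$ of elements of $L^r(\mathcal O)$. Thus $\|G(u)-G(v)\|_{\gamma(\R^K,L^r)} \eqsim \bigl(\sum_k \|g_k(u)-g_k(v)\|_{L^r}^2\bigr)^{1/2} \le \bigl(\sum_k C^2\|u-v\|_{L^r}^2\bigr)^{1/2} = C\sqrt{K}\,\|u-v\|_{L^r}$. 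To get genuine $\gamma$-Lipschitz continuity in the sense of (G1) — i.e.\ the estimate with respect to the norms of $L^2_\gamma((0,T),\mu;L^r(\mathcal O))$ for arbitrary finite measures $\mu$ — I would invoke the fact, recalled in the text after assumptions (F)/(G), that since $L^r(\mathcal O)$ with $r\ge 2$ (or, more generally, after reduction to this case, see below) has type $2$, every Lipschitz function with values in $\gamma(\R^K,E)$ is automatically $\gamma$-Lipschitz; alternatively, for $K$-dimensional $H$ one can argue directly that composition with a scalar Lipschitz function interacts well with the $\gamma$-norm componentwise. The linear growth estimate is analogous, using $|g_k(s)|\le |g_k(0)|+C|s|$ and $\sup_n|g_{k,n}(0)|<\infty$. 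Uniformity in $n$ again comes directly from the uniform Lipschitz bounds in (iii).

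For (G2), fix $x\in\R$ (i.e.\ a constant function, but really we need it for all $x$ in $L^r$; by the reduction in the proof of Lemma~\ref{l.step2} it suffices to treat simple functions and then pass to the limit using (G1), so it is enough to check pointwise convergence $G_n(x)\to G(x)$ for scalars $x$) and a finite measure $\mu$ on $[0,T]$. Since $G_n(x)$ and $G(x)$ are the time-independent operators corresponding to the $K$-tuples $(g_{1,n}(x),\dots,g_{K,n}(x))$ and $(g_1(x),\dots,g_K(x))$ of constant functions in $L^r(\mathcal O)$, and since $g_{k,n}(x)\to g_k(x)$ in $\R$ by (v), the convergence $\|G_n(\cdot,x)-G(\cdot,x)\|_{\gamma(L^2((0,T),\mu;\R^K),L^r(\mathcal O))} = \mu([0,T])^{1/2}\|G_n(x)-G(x)\|_{\gamma(\R^K,L^r)} \eqsim \mu([0,T])^{1/2}\bigl(\sum_k|g_{k,n}(x)-g_k(x)|^2\bigr)^{1/2}|\mathcal O|^{1/r} \to 0$ is immediate.

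The one genuine subtlety — and the main obstacle — is the restriction $r\ge 2$ versus general $1<r<\infty$. The clean "Lipschitz into $\gamma(H,E)$ implies $\gamma$-Lipschitz" statement uses type $2$, which $L^r(\mathcal O)$ has only for $r\ge 2$. For $1<r<2$ one must either verify $\gamma$-Lipschitz continuity by a more hands-on argument exploiting that $H=\R^K$ is finite-dimensional (so that $\gamma(L^2((0,T),\mu;\R^K),E) \cong \bigl(\gamma(L^2((0,T),\mu),E)\bigr)^K$ and the scalar composition acts coordinatewise and pointwise in time, where Lipschitz continuity of the composition map on $\gamma(L^2((0,T),\mu),L^r(\mathcal O))$ can be checked via the Kahane–Khintchine inequalities and the lattice structure of $L^r$), or else appeal to a result in \cite{vNVW08} characterizing $\gamma$-Lipschitz continuity for superposition operators on $L^r$-spaces for all $1<r<\infty$. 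I would cite the latter to keep the argument short, remarking that the finite-dimensionality of the noise makes the verification elementary in any case. Apart from this point, every step is a routine estimate, and I expect the write-up to be brief.
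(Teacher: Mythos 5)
Your proof follows essentially the same route as the paper's: part (1) is handled by the same pointwise Lipschitz/growth estimates and dominated convergence, and for part (2) the paper likewise reduces componentwise to the case $K=1$ (via an orthonormal basis of $L^2((0,T),\mu;\R^K)$) and then cites \cite[Example 5.5]{vNVW08} for the $\gamma$-Lipschitz continuity of scalar superposition operators on $L^r(\mathcal{O})$ for all $1<r<\infty$ --- precisely the citation you propose to settle the $1<r<2$ issue. The one step to tidy is (G2): the paper simply applies dominated convergence to $g_{k,n}(u(\cdot))\to g_k(u(\cdot))$ in $L^r(\mathcal{O})$ for each fixed $u\in L^r(\mathcal{O})$ and then tensorizes with $\one_{(0,T)}$, whereas your reduction to scalar arguments would need spatially simple functions together with the uniform Lipschitz bound (constant functions alone are not dense in $L^r(\mathcal{O})$, and the reduction in the proof of Lemma \ref{l.step2} presupposes (G2) for arbitrary $x\in E$, so it cannot be invoked to prove it); this is a repairable imprecision rather than a genuine gap.
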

\begin{proof}
(1) The assumptions imply that the maps $F$ and $F_n : L^r(\OO) \to L^r(\OO)$ are
Lipschitz continuous 
and of linear growth on $L^r(\OO)$ with uniform constants. Hence (F1) is
satisfied.

Assumption (F2) follows from dominated convergence.\medskip

(2) Let $T>0$ and $\mu$ be a finite Borel measure on $(0,T)$. If $(h_m)_{m\ge 1}$
is an orthonormal
basis of $L^2((0,T), \mu )$, then $(h_m\otimes e_k)_{m\ge 1,\, k =1,\ldots ,
K}$ is an orthonormal
basis of $L^2((0,T), \mu; \CR^K)$.  Using this fact, it is easy to see that
\[ \n R \n_{\g (L^2((0,T),\mu;\CR^K), L^r(\OO))} \leq
 \sum_{k=1}^K \n Re_k\n_{\g (L^2((0,T),\mu), L^r(\OO))}
\]
for all $R \in \g (L^2((0,T),\mu; \CR^K),L^r(\OO))$. 

This shows that it suffices to consider the case $K=1$. But in this case, we
infer from \cite[Example 5.5]{vNVW08} that 
$G$ and $G_n$ are $\gamma$-Lipschitz continuous with a uniform $\g$-Lipschitz
constant. Furthermore, by dominated
convergence, $G_n(u) \to G(u)$ in $L^r(\OO)$ for all $u\in L^r(\OO)$. 
\end{proof}

We are now ready to rewrite our SPDE as an abstract Cauchy problem 
$$ 
\begin{aligned}
dX(t)& = [AX(t) + F(X(t))]\,dt + G(X(t))\,dW(t), \quad t\in [0,T],\\ 
 X(0)& = \xi.
\end{aligned}
$$ 
We denote by $X_n$ the solution of this problem with $A$, $F$, $G$, $\xi$
replaced by $A_n$, $F_n$, $G_n$, $\xi_n$. 
Noting that the type of $L^r(\OO)$ is $\min\{r,2\}$, we deduce 
the following strengthened version of Theorem \ref{thm:main} from Theorem
\ref{t.hoelderconv}:

\begin{thm}\label{thm:5.3}
Assume that {\rm (i)} -- {\rm (v)} of Theorem \ref{thm:main} hold, and let $1<r<\infty$ and $p>\max\{2, r'\}$, where $r'$
denotes the conjugate index to $r$.
If $\xi_n\to \xi$ in $L^p(\Omega, \cF_0; L^r(\mathcal{O}))$, then
$$X_n - S_n(\cdot)\xi_n \to X - S(\cdot)\xi \ \hbox{in} \  L^q(\Omega;
C^\lambda([0,T]; L^r(\mathcal{O}))$$
for all $1\leq q < p$ and $0\leq \lambda < \half -\frac{1}{p}$.
Moreover, 
$  X_n \to X$ in ${L^q(\Omega; C([0,T]; L^r(\mathcal{O})))}$
for all $1\le q<p$.
\end{thm}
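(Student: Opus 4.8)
The plan is to deduce Theorem~\ref{thm:5.3} directly from the abstract results by verifying, once and for all, that the hypotheses of Theorem~\ref{t.coeffdep} and Theorem~\ref{t.hoelderconv} are met in the concrete $L^r(\mathcal{O})$-setting. First I would fix $1<r<\infty$ and, as announced in the text, abbreviate $A:=A^{(r)}$, $A_n:=A_n^{(r)}$, $\bS:=\bS^{(r)}$, $\bS_n:=\bS_n^{(r)}$. The space $E=L^r(\mathcal{O})$ is a UMD space for $1<r<\infty$, with type $\tau=\min\{r,2\}$; hence $\frac1\tau-\frac12 = \max\{\frac1r-\frac12,0\}$, and the condition $\frac1p < 1-\frac1\tau$ from Theorem~\ref{t.coeffdep} reads $\frac1p < \min\{\frac1{r'},1\} = \frac1{r'}$ (when $r\ge 2$ this is automatic since $p>2$), which is exactly the hypothesis $p>\max\{2,r'\}$.

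Next I would invoke the two structural lemmas already proved in this section. By Lemma~\ref{l.apA}, conditions (i), (ii), (iv) of Theorem~\ref{thm:main} imply that $A$ and $A_n$ satisfy (A1) and (A2). By Lemma~\ref{l.apFG}, conditions (iii) and (v) imply that the Nemytskii-type maps $F,F_n$ satisfy (F1) and (F2) and that $G,G_n$ satisfy (G1) and (G2); note that $F$ and $G$ are time- and $\omega$-independent here, so the measurability and adaptedness requirements in (F) and (G) are trivially fulfilled. Together with the assumption $\xi_n\to\xi$ in $L^p(\Omega,\mathscr{F}_0;L^r(\mathcal{O}))$, all hypotheses of Theorem~\ref{t.coeffdep} and Theorem~\ref{t.hoelderconv} are in place. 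Applying Theorem~\ref{t.hoelderconv} with $E=L^r(\mathcal{O})$ yields, for every $0\le\lambda<\frac12-\frac1p$ and every $1\le q<p$,
\[
 X_n - S_n(\cdot)\xi_n \to X - S(\cdot)\xi \quad\text{in } L^q(\Omega;C^\lambda([0,T];L^r(\mathcal{O}))).
\]
Applying the second assertion of Theorem~\ref{t.coeffdep} gives $X_n\to X$ in $L^q(\Omega;C([0,T];L^r(\mathcal{O})))$ for all $1\le q<p$. This is precisely the statement of Theorem~\ref{thm:5.3}.

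The only genuine point requiring care---and the step I would expect to be the main obstacle---is checking that the quantitative hypothesis $\frac1\tau-\frac12+\frac1p<\alpha<\frac12$ appearing in Theorem~\ref{t.coeffdep} can indeed be satisfied, i.e. that the interval is nonempty. This amounts to $\frac1\tau-\frac12+\frac1p<\frac12$, equivalently $\frac1p<1-\frac1\tau=\frac1{r'}$ (or $<1$ when $r\le 2$), which is again the standing assumption $p>\max\{2,r'\}$; one then simply picks such an $\alpha$ and runs the abstract machinery. Everything else is bookkeeping: identifying the type of $L^r(\mathcal{O})$, confirming it is UMD, and citing Lemmas~\ref{l.apA} and \ref{l.apFG}. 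Finally, Theorem~\ref{thm:main} as stated in the introduction follows as the special case of deterministic initial data (or $\xi_n\equiv\xi$ constant in $\Omega$), so no separate argument is needed for it.
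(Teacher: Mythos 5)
Your proposal is correct and follows essentially the same route as the paper: verify (A1)--(A2) via Lemma \ref{l.apA} and (F1)--(F2), (G1)--(G2) via Lemma \ref{l.apFG}, note that $L^r(\mathcal{O})$ is UMD of type $\tau=\min\{r,2\}$ so that $p>\max\{2,r'\}$ makes the window $\frac1\tau-\frac12+\frac1p<\alpha<\frac12$ nonempty, and then apply Theorems \ref{t.hoelderconv} and \ref{t.coeffdep}. (Only a cosmetic slip: the condition $\frac1p<1-\frac1\tau$ equals $\min\{\frac1{r'},\frac12\}$, not $\min\{\frac1{r'},1\}$, but your case distinction handles both cases correctly.)
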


We emphasize that the applicability of our approach is by no means limited to
second order 
differential operators in divergence form and Dirichlet boundary conditions. 
Indeed, our approach applies to any sequence of operators for which the
conditions (A1) and (A2) can be verified. This includes, for instance,
the case of Neumann boundary conditions in the above example (provided 
$\partial \mathcal{O}$ is Lipschitz, so as to ensure the compactness of the
embedding $H^1(\mathcal{O)}\embed L^2(\mathcal{O})$).

\subsection{Approximating the domain}\label{subsect:domain}

A map $\bS : [0,\infty ) \to
\mathscr{L}(E)$ is called a
\emph{degenerate semigroup} if it is strongly continuous and we have $S(t+s) =
S(t)S(s)$ for all $t,s \geq 0$.
Thus the only difference to a strongly continuous semigroup is that we do not
assume that $S(0) = I$. Instead,
$\pi:= S(0)$ is now a bounded projection which commutes with every operator
$S(t)$. Consequently, we can
write $S(t) = \iota \tilde S(t)\pi$, where $\pi$ is the projection viewed as an
operator onto its range $\tilde E$,
$\tilde{\mathbf{S}}$ is the restriction 
of $\bS$ to $\tilde E$ which is invariant under $\bS$,
and $\iota : \tilde E \subseteq E$
is the canonical inclusion map.

It is easy to see that the Laplace transform of $\bS$ is given by $\iota
R(\lambda, \tilde A)\pi$ for $\Re \lambda$
large enough, where $R(\lambda, \tilde A)$ denotes the resolvent of the generator
$\tilde A$ of the restricted semigroup
$\tilde{\mathbf{S}}$. We may thus say that the \emph{generator} $A$ of $\bS$ is the
operator $\tilde A$, viewed as an operator
on $E$. 
As a replacement for the resolvent
of $A$ we define $R_\lambda (A) := \iota R(\lambda, \tilde A) \pi$ and note that
$R_\lambda (A)$ is a pseudo-resolvent.
We will call a degenerate semigroup $\bS$ \emph{analytic} if the restricted
semigroup $\tilde{\mathbf{S}}$ is analytic on $\tilde{E}$.
For more information on degenerate semigroups and pseudo-resolvents we refer
the reader to \cite{are01}.

When we allow degenerate semigroups we will assume that $A_n$ and $A$ are
generators of analytic, degenerate
semigroups $\bS_n := \iota_n\tilde{\mathbf{S}}_n\pi_n$ and $\bS := \iota
\tilde{\mathbf{S}}\pi$. 
Furthermore, we will make the following assumptions:
\begin{enumerate}
\item[$\mathrm{(A1')}$] The operators $A$ and $A_n$ are {\em uniformly 
sectorial} 
 in the sense there exist numbers $M\ge 1$ and $w\in\R$ such that 
\begin{itemize}
 \item[(i)]  $\{\lambda \in \CC\, : \,
\Re\lambda > w\}$ is contained in $\rho (\tilde{A})\cap (\bigcap_{n}\rho (\tilde{A}_n))$,
where $\tilde{A}$ and $\tilde{A}_n$
denote the generators of the strongly continuous semigroups $\tilde{\mathbf{S}}$ and
$\tilde{\mathbf{S}}_n$ respectively.
\item[(ii)] We have
\[ \sup_{\Re\la > w}\n (\la -w)R_\la (A_n)\n\le M, \,\, \sup_{\Re\la > w} \n (\la
-w)R_\la (A)\n \leq M.\]
\end{itemize}
\end{enumerate}
\begin{enumerate}
\item[$\mathrm{(A2')}$]  The operators $A_n$ converge to $A$ {\em in the strong
resolvent sense}, i.e.
$$\limn R_\la (A_n)x = R_\la (A)x$$ for all $\Re\la>w$ and $x\in E$.
\end{enumerate}

The reader may check that, \emph{mutatis mutandis}, all results of this article
extend to the degenerate case
when replacing (A1) and (A2) with $\mathrm{(A1')}$ and $\mathrm{(A2')}$. In
particular, 
there is a Trotter-Kato type theorem for degenerate semigroups, see
\cite[Theorem 5.1]{are01}.

As an application, we shall use degenerate semigroups to
study the dependence of the solutions of the SPDE in the previous section on the
domain $\OO$. Recall that a sequence of domains $\mathcal{O}_n$
is said to converge towards $ \mathcal{O}$ \emph{in the sense of Mosco} if
\begin{itemize}
\item If $(u_n)$ is a sequence in $H^1_0(\R^d)$ with $u_n \in
H^1_0(\mathcal{O}_n)$ for all $n$, then
every weak limit point of this sequence in $H^1_0(\R^d)$ lies in
$H^1_0(\mathcal{O})$;
\item For all $u \in H^1_0(\mathcal{O})$ there exists a sequence 
$(u_n)$ is a sequence in $H^1_0(\R^d)$ with $u_n \in H^1_0(\mathcal{O}_n)$ for
all $n$ such
that $u_n \to u$ in $H^1(\CR^d)$.
\end{itemize}

We give two examples of Mosco convergence. We refer to \cite{daners08} for the
proofs and further examples.

\begin{enumerate}
\item If $\mathcal{O}_1\subseteq \mathcal{O}_{2} \subseteq \dots$
and $\bigcup_{n\ge 1}\mathcal{O}_n = \mathcal{O}$, then $\mathcal{O}_n \to
\mathcal{O}$ in the sense of Mosco.
\item If $\mathcal{O}_1 \supseteq \mathcal{O}_2\supseteq\dots$
and $\bigcap_{n\ge 1}\mathcal{O}_n = \mathcal{O}$, then $\mathcal{O}_n \to
\mathcal{O}$ in the sense of Mosco
provided $\mathcal{O}$ satisfies the regularity condition
\[ H^1_0(\mathcal{O}) = \big\{u\in H^1(\CR^d)\,:\, u\equiv 0\,\,\mbox{a.e.\
on}\,\, 
\complement\overline{\mathcal{O}}\big\}.\]
\end{enumerate}

For simplicity we assume that a single set of coefficients $\mathbf{a}$ and
$\mathbf{b}$ is given,
defined on all of $\CR^d$ and satisfying assumptions (i) and (ii) of Theorem \ref{thm:main}.
Given an open set $\mathcal{O}\subseteq\R^d$, we may again consider the form
$\mathfrak{a}$ defined by \eqref{eq.form}.
Associated with this form we obtain, as before, a strongly continuous analytic
semigroup 
$\bS$ on $L^r(\mathcal{O})$. We may consider $\bS$ as a
degenerate 
semigroup on $L^r(\CR^d)$ by extending functions in $L^r(\mathcal{O})$
identically $0$ outside $\mathcal{O}$. Its (not necessarily densely defined) generator
is denoted by $A$.

Suppose next that we are given a sequence of domains $\mathcal{O}_n$ converging to $\mathcal{O}$
in the sense of Mosco, and
consider the corresponding 
forms $\mathfrak{a}_n$ on $H^1_0(\mathcal{O}_n)$. The associated degenerate
semigroups on $L^r(\CR^d)$
are denoted by $\bS_n$ and their (not necessarily densely defined) generators by
 $A_n$.
It is clear from the results of Section \ref{subsect:global} that condition
$\mathrm{(A1')}$  is satisfied. 
We thus have to verify the (pseudo-)resolvent convergence $\mathrm{(A2')}$.
From Theorems 5.2.6 and 4.3.4 of \cite{daners08}, we infer:

\begin{lem}\label{l.mosco} Let $\mathcal{O}$ and $\mathcal{O}_n$ be bounded open
domains contained in some 
fixed bounded subset of $\R^d$ and suppose that $\mathcal{O}_n\to \mathcal{O}$
in the sense of Mosco. 
If {\rm (i)} and {\rm (ii)} of Theorem \ref{thm:main} hold, then for all $1<r<\infty$ we have $A_n \to A$ in the
strong resolvent sense.
\end{lem}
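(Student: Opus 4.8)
The plan is to reduce the statement to the $L^2$-case, which is handled by the cited results from Daners' work, and then transfer it to general $L^r(\mathcal O)$ via the compact-resolvent argument that was already used in the proof of Lemma \ref{l.apA}. The new feature here is that the semigroups are \emph{degenerate} on the fixed ambient space $L^r(\CR^d)$, so the first task is to interpret the claimed convergence $A_n \to A$ in the strong resolvent sense as convergence of the pseudo-resolvents $R_\lambda(A_n) = \iota_n R(\lambda,\tilde A_n)\pi_n$ to $R_\lambda(A) = \iota R(\lambda,\tilde A)\pi$ in the strong operator topology of $\mathscr L(L^r(\CR^d))$, where $\tilde A_n$, $\tilde A$ are the genuine generators on $L^r(\mathcal O_n)$, $L^r(\mathcal O)$.

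First I would treat $r=2$. In this case the forms $\mathfrak a_n$ live on $H^1_0(\mathcal O_n) \subseteq H^1_0(\CR^d)$ and the degenerate semigroups on $L^2(\CR^d)$ are exactly those generated by the operators associated with these forms, viewed on the ambient space by zero-extension. Mosco convergence $\mathcal O_n \to \mathcal O$ is precisely the hypothesis under which \cite[Theorem 5.2.6]{daners08} applies: it yields convergence of the associated forms in the sense of Mosco, hence strong convergence of the corresponding (pseudo-)resolvents on $L^2(\CR^d)$. This gives $\mathrm{(A2')}$ for $r=2$. Here one uses that all $\mathcal O_n$ and $\mathcal O$ sit inside a fixed bounded set, so that $H^1_0$ of each is a closed subspace of $H^1_0$ of that fixed bounded set, and the relevant form-convergence machinery is available.

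Next I would bootstrap from $r=2$ to arbitrary $1<r<\infty$. Since $\mathcal O$ (and each $\mathcal O_n$) is bounded, the embedding $H^1_0 \hookrightarrow L^2$ is compact, so the generators $\tilde A$, $\tilde A_n$ have compact resolvents; consequently the pseudo-resolvents $R_\lambda(A_n)$, $R_\lambda(A)$ on $L^2(\CR^d)$ are compact operators. The uniform sectoriality $\mathrm{(A1')}$ — which, as noted in the excerpt, follows from Section \ref{subsect:global} together with the Stein-interpolation argument of Lemma \ref{l.apA} applied to the degenerate setting — gives uniform $L^r$-bounds on $R_\lambda(A_n)$ for $1<r<\infty$. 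One then invokes \cite[Theorem 4.3.4]{daners08}, exactly as in the proof of Lemma \ref{l.apA}: strong resolvent convergence on $L^2$, plus compactness, plus uniform boundedness on the $L^r$-scale (and consistency of the pseudo-resolvents across different $r$, which is inherited from the consistency of the extrapolated semigroups $\bS^{(r)}$), propagates the strong resolvent convergence to every $L^r(\CR^d)$, $1<r<\infty$. This is the content of $\mathrm{(A2')}$, and combined with $\mathrm{(A1')}$ it is precisely what is asserted.

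The main obstacle I anticipate is purely bookkeeping rather than conceptual: one must be careful that ``strong resolvent convergence'' for degenerate semigroups means convergence of pseudo-resolvents $R_\lambda(A_n)x \to R_\lambda(A)x$ in $L^r(\CR^d)$ for \emph{all} $x$ in the ambient space — including $x$ supported outside $\mathcal O$, where the limit is nonetheless well-defined because $R_\lambda(A)$ annihilates the part of $x$ living off $\overline{\mathcal O}$ — and that the cited Daners theorems, which are stated for genuine semigroups on moving spaces, are being applied through the identification $S_n(t) = \iota_n \tilde S_n(t)\pi_n$. Checking that the hypotheses of \cite[Theorems 5.2.6, 4.3.4]{daners08} are met in this degenerate/ambient formulation, and that the role of the boundedness assumption on the family $\{\mathcal O_n\}$ is correctly used to keep everything inside one fixed $L^2$-space with a uniform compact embedding, is the only delicate point; the rest is a direct citation.
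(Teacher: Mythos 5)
Your proposal follows essentially the same route as the paper, which infers the lemma directly from Theorems 5.2.6 and 4.3.4 of \cite{daners08}: Mosco convergence gives strong (pseudo-)resolvent convergence on $L^2(\CR^d)$, and the transfer to $L^r(\CR^d)$ for $1<r<\infty$ uses compactness of the resolvents together with the uniform sectoriality on the $L^r$-scale, exactly as in the proof of Lemma \ref{l.apA}. Your additional care about interpreting strong resolvent convergence for the degenerate semigroups via the pseudo-resolvents $R_\lambda(A_n)=\iota_n R(\lambda,\tilde A_n)\pi_n$ is consistent with the paper's framework in $\mathrm{(A1')}$--$\mathrm{(A2')}$ and is correct.
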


As before we further assume that the functions
$f,g : \CR \to \CR$ satisfy the assumptions (iii) and (v) of Theorem \ref{thm:main}, and 
that the driving processes $W_k$ are independent real-valued standard Brownian
motions. 
We have the following result.

\begin{thm} Let $1<r<\infty$ and $p>\max\{2,r'\}$, and suppose that the initial datum $\xi$
belongs to $L^p(\Omega, \cF_0;L^r(\CR^d))$. Under the above assumptions, let 
$$X:= \mathrm{sol}(A, F, G, \xi), \ \ X_n := \mathrm{sol}(A_n, F, G, \xi),$$
denote the unique mild solutions of the associated stochastic evolution
equations. 
Then $$X_n-S_n(\cdot)\xi \to X -S(\cdot)\xi \ \hbox{in} \  L^q(\Omega; C^\lambda([0,T]; L^r(\CR^d)))$$ 
for all $1\leq q < p$ and $0\leq \lambda < \half -\frac{1}{p}$.
Moreover, 
$  X_n \to X$ in ${L^q(\Omega; C([0,T];L^r(\mathcal{O})))}$
for all $1\le q<p$.
\end{thm}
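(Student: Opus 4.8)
The plan is to recognise this theorem as a special case of the degenerate analogues of Theorems~\ref{t.coeffdep} and~\ref{t.hoelderconv}, once the SPDEs on the varying domains $\OO_n$ are recast as stochastic evolution equations with degenerate generators on a single Banach space. First I would fix a bounded open set $\OO'\subseteq\R^d$ containing $\overline{\OO}$ and every $\overline{\OO_n}$ (possible since, by hypothesis, all these sets lie in a common bounded subset of $\R^d$) and work in $E:=L^r(\OO')$. The degenerate semigroups $\bS$ and $\bS_n$ are then viewed as acting on $E$, with projections $\pi=\one_{\OO}$, $\pi_n=\one_{\OO_n}$ (multiplication operators) and ranges $\tilde E=L^r(\OO)$, $\tilde E_n=L^r(\OO_n)$. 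On this fixed space $E$ the Nemytskii maps $F(u)(x):=f(u(x))$ and $G(u)h(x):=\sum_{k=1}^K g_k(u(x))[e_k,h]$ are well defined and, because $|\OO'|<\infty$, the argument of Lemma~\ref{l.apFG} carried out on $\OO'$ shows that they satisfy (F1) and (G1); since here $F_n\equiv F$, $G_n\equiv G$ and $\xi_n\equiv\xi$ are independent of $n$, conditions (F2) and (G2) hold trivially. Because $S(t-s)=\iota\tilde S(t-s)\pi$ and $S_n(t-s)=\iota_n\tilde S_n(t-s)\pi_n$ annihilate whatever lies outside $\OO$, resp.\ $\OO_n$, the mild solution produced by the abstract framework on $E$ with these data is exactly the mild solution of the SPDE on $\OO$, resp.\ $\OO_n$ (with initial value $\pi\xi$, resp.\ $\pi_n\xi$, which is all that the degenerate formulation sees). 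Moreover all of $X(t),\,S(t)\xi,\,X_n(t),\,S_n(t)\xi$ take values in $L^r(\OO')$, so convergence of the (compensated) solutions in $L^q(\Om;C^\la([0,T];L^r(\OO')))$ is equivalent to convergence in $L^q(\Om;C^\la([0,T];L^r(\R^d)))$, and one may replace $\xi$ by $\one_{\OO'}\xi\in L^p(\Om,\cF_0;L^r(\OO'))$ without changing $\pi\xi$ or $\pi_n\xi$.

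Next I would verify the two operator hypotheses $\mathrm{(A1')}$ and $\mathrm{(A2')}$ for the degenerate generators $A$, $A_n$ on $E$. Hypothesis $\mathrm{(A1')}$ is exactly what the argument of Lemma~\ref{l.apA} delivers: the uniform ellipticity and boundedness conditions (i)--(ii) confine the numerical ranges of the forms $\mathfrak{a}_n$ to a common sector, which gives uniform analyticity of the restricted semigroups $\tilde\bS_n$ on $L^2$; the contractivity estimate from \cite[Theorem~5.1]{daners00}, with $w=2\max\{s-1,s'-1\}\kappa^{-1}C^2$ depending only on $\kappa$ and $C$, combined with the Stein interpolation theorem in the form of \cite[Lemma~5.8]{kunweis}, then gives uniform sectoriality on $L^r$; the passage to degenerate semigroups via $\iota_n,\pi_n$ is harmless because $\mathrm{(A1')}$ only constrains the generators $\tilde A_n$ of the restricted semigroups. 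Hypothesis $\mathrm{(A2')}$, i.e.\ $R_\la(A_n)x\to R_\la(A)x$ for $\Re\la>w$ and $x\in E$, is precisely Lemma~\ref{l.mosco}: it uses the Mosco convergence $\OO_n\to\OO$, the compactness of $H^1_0(\OO)\embed L^2(\OO)$ (which holds since $\OO$ is bounded, whence $\tilde A$ has compact resolvent), and Theorems~5.2.6 and~4.3.4 of \cite{daners08}.

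Finally I would invoke the degenerate versions of Theorems~\ref{t.coeffdep} and~\ref{t.hoelderconv}, which (as noted in Section~\ref{subsect:domain}) hold \emph{mutatis mutandis} with $\mathrm{(A1')}$, $\mathrm{(A2')}$ in place of (A1), (A2), using the Trotter--Kato theorem for degenerate semigroups \cite[Theorem~5.1]{are01} in place of Lemma~\ref{l.trotterkato}. Since $E=L^r(\OO')$ is a UMD space of type $\tau=\min\{r,2\}$ and the assumption $p>\max\{2,r'\}$ yields $\frac1p<1-\frac1\tau$, these theorems apply with data $(A_n,F,G,\xi)$ and give $X_n\to X$ in $L^q(\Om;C([0,T];L^r(\OO')))$ and, for every $0\le\la<\half-\frac1p$, $X_n-S_n(\cdot)\xi\to X-S(\cdot)\xi$ in $L^q(\Om;C^\la([0,T];L^r(\OO')))$, for all $1\le q<p$; transporting these through the support observation of the first paragraph gives the asserted convergences in $L^r(\R^d)$.

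The step I expect to require the most care is not the analytic input---$\mathrm{(A1')}$ and $\mathrm{(A2')}$ are already supplied by Lemmas~\ref{l.apA} and~\ref{l.mosco}, and the degenerate framework is declared to work \emph{mutatis mutandis}---but rather the bookkeeping of the first paragraph: the choice of the common bounded ground space $\OO'$, the verification that the Nemytskii maps stay uniformly Lipschitz and of linear growth there, and the identification of the abstract degenerate solution with the genuine SPDE solution on each $\OO_n$. Once this identification is in place, the statement is essentially a citation of the degenerate Theorem~\ref{t.hoelderconv}.
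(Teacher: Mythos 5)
Your proposal is correct and follows the paper's own (largely implicit) route: check $\mathrm{(A1')}$ via the argument of Lemma \ref{l.apA}, $\mathrm{(A2')}$ via Lemma \ref{l.mosco}, the Nemytskii conditions via Lemma \ref{l.apFG} (with (F2), (G2) trivial since $F$, $G$, $\xi$ do not depend on $n$), and then invoke the degenerate versions of Theorems \ref{t.coeffdep} and \ref{t.hoelderconv}, the type condition being guaranteed by $p>\max\{2,r'\}$. Your only deviation --- working on $L^r(\mathcal{O}')$ for a common bounded $\mathcal{O}'$ instead of directly on $L^r(\R^d)$, and observing that all (compensated) solutions are supported in $\mathcal{O}'$ and depend only on $\pi\xi$, $\pi_n\xi$ --- is harmless bookkeeping, and in fact slightly more careful than the paper, since it keeps the Nemytskii maps well defined and of linear growth even when $f(0)\ne 0$ or $g_k(0)\ne 0$.
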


Once again it is possible to approximate simultaneously the domain
$\mathcal{O}$, the coefficients $\mathbf{a},
\mathbf{b}$, the nonlinearities $f$ and $g$, and the initial datum $\xi$.
We leave the details to the interested reader.

\section*{Acknowledgment}
The authors are grateful to Marta Sanz-Sol\'e for pointing out the references
\cite{bmss95, mss94} and Mark Veraar for a helpful discussion.

\bibliographystyle{siam}
\bibliography{Kunze_vanNeerven_JEE}
\end{document}